\documentclass[11pt,reqno]{amsart}

\usepackage[T1]{fontenc}
\usepackage[a4paper,margin=29mm]{geometry}
\usepackage{lmodern}
\usepackage{microtype}
\microtypesetup{expansion=false}
\usepackage{amsmath,amssymb,amsthm}
\usepackage{mathtools}
\usepackage{mathrsfs}
\usepackage{enumitem}
\usepackage{tikz-cd}
\usepackage{xcolor}
\usepackage{aliascnt}
\usepackage{hyperref}
\usepackage[nameinlink,capitalize,noabbrev]{cleveref}

\hypersetup{
  colorlinks=true,
  linkcolor=blue!55!black,
  citecolor=green!40!black,
  urlcolor=blue!65!black
}

\numberwithin{equation}{section}

\newtheorem{theorem}{Theorem}[section]

\newaliascnt{proposition}{theorem}
\newtheorem{proposition}[proposition]{Proposition}
\aliascntresetthe{proposition}

\newaliascnt{lemma}{theorem}
\newtheorem{lemma}[lemma]{Lemma}
\aliascntresetthe{lemma}

\newaliascnt{corollary}{theorem}
\newtheorem{corollary}[corollary]{Corollary}
\aliascntresetthe{corollary}

\newaliascnt{conjecture}{theorem}

\aliascntresetthe{conjecture}

\newaliascnt{assumption}{theorem}

\aliascntresetthe{assumption}

\theoremstyle{definition}

\newaliascnt{definition}{theorem}
\newtheorem{definition}[definition]{Definition}
\aliascntresetthe{definition}

\newaliascnt{example}{theorem}

\aliascntresetthe{example}

\theoremstyle{remark}

\newaliascnt{remark}{theorem}
\newtheorem{remark}[remark]{Remark}
\aliascntresetthe{remark}

\crefname{proposition}{Proposition}{Propositions}
\Crefname{proposition}{Proposition}{Propositions}
\crefname{lemma}{Lemma}{Lemmas}
\Crefname{lemma}{Lemma}{Lemmas}
\crefname{corollary}{Corollary}{Corollaries}
\Crefname{corollary}{Corollary}{Corollaries}
\crefname{conjecture}{Conjecture}{Conjectures}
\Crefname{conjecture}{Conjecture}{Conjectures}
\crefname{assumption}{Assumption}{Assumptions}
\Crefname{assumption}{Assumption}{Assumptions}
\crefname{definition}{Definition}{Definitions}
\Crefname{definition}{Definition}{Definitions}
\crefname{example}{Example}{Examples}
\Crefname{example}{Example}{Examples}
\crefname{remark}{Remark}{Remarks}
\Crefname{remark}{Remark}{Remarks}

\setlist[enumerate]{leftmargin=2.4em}

\DeclareMathOperator{\Gr}{Gr}
\DeclareMathOperator{\Conv}{Conv}
\DeclareMathOperator{\Span}{span}

\newcommand{\C}{\mathbb C}
\newcommand{\Q}{\mathbb Q}
\newcommand{\Z}{\mathbb Z}
\newcommand{\R}{\mathbb R}
\newcommand{\A}{\mathbb A}
\newcommand{\cD}{\mathscr D}
\newcommand{\cE}{\mathcal E}
\newcommand{\cK}{\mathscr K}
\newcommand{\bH}{\mathbb H}
\newcommand{\bSigma}{\boldsymbol\Sigma}
\newcommand{\cbSigma}{\check{\boldsymbol\Sigma}}
\newcommand{\Boxop}{\operatorname{Box}}
\newcommand{\DRlog}{\operatorname{DR}_{\log}}
\newcommand{\Forb}{\mathcal F_{\mathrm{orb}}}
\newcommand{\Fst}{\mathcal F_{\mathrm{st}}}
 
\title[Stringy irregular Hodge numbers for toric LG data]
{Stringy Irregular Hodge Numbers for\\
Non-Simplicial Toric Landau--Ginzburg Data}
\author{Haoxu Wang}
\address{Morningside Center of Mathematics, Academy of Mathematics and
Systems Science, Chinese Academy of Sciences, Beijing 100190, China}
\email{krassotkinkolya@gmail.com}
\subjclass[2020]{Primary 14M25; Secondary 14F40, 13D02}
\keywords{toric face rings, Koszul--de Rham complexes, Landau--Ginzburg
models, non-simplicial fans, crepant subdivisions, irregular Hodge
numbers}
\hypersetup{
  pdftitle={Stringy Irregular Hodge Numbers for Non-Simplicial
    Toric Landau--Ginzburg Data},
  pdfauthor={Haoxu Wang},
  pdfsubject={Stringy irregular Hodge numbers for non-simplicial
    toric Landau--Ginzburg data},
  pdfkeywords={toric face rings, Koszul--de Rham complexes,
    Landau--Ginzburg models, non-simplicial fans, crepant subdivisions,
    irregular Hodge numbers}
}
\date{}

\begin{document}
\raggedbottom

\begin{abstract}
We extend orbifold irregular Hodge theory from simplicial toric
Landau--Ginzburg models to data defined by possibly non-simplicial
stacky fans.  
For a Clarke mirror pair,
we construct a filtered Koszul--de Rham complex over toric face
rings.  
For simplicial ray-supported data,
a strict filtered comparison identifies this complex with the
Harder--Lee cellular residue complex, and hence it computes the
orbifold irregular Hodge numbers.

Using this complex, for a fixed possibly non-simplicial datum, 
we prove that the models obtained from 
its projective crepant simplicial subdivisions 
have the same orbifold irregular Hodge polynomial 
whenever the potential is nondegenerate at infinity.
This common value defines the stringy irregular Hodge numbers of the original datum.
\end{abstract}
 \maketitle
\setcounter{tocdepth}{1}
\tableofcontents

\section{Introduction}
\label{sec:introduction}

\subsection{Irregular Hodge theory and toric mirror symmetry}

Harder--Lee \cite{HarderLee} study mirror symmetry for toric
Landau--Ginzburg models on Deligne--Mumford stacks.  Their models are
pairs \((U,w)\) consisting of a smooth Deligne--Mumford stack \(U\) and a
regular function \(w:U\to\A^1\).  The twisted de Rham cohomology is
computed by \((\Omega_U^\bullet,d+dw\wedge)\).  This exponential twist
is generally irregular at infinity, and the relevant filtration is the
irregular Hodge filtration.  Yu introduced the filtration for smooth
quasi-projective varieties \cite{Yu}; its degeneration and its
mixed-Hodge-module interpretation are developed in
\cite{ESY,SabbahYu}; see also \cite{SabbahIHT}.

For a Deligne--Mumford Landau--Ginzburg model, applying the irregular
Hodge filtration to the connected components of the inertia stack and
incorporating the age shifts gives the orbifold irregular Hodge numbers
\(f_{\mathrm{orb}}^{\lambda,\mu}(U,w)\) and their generating series
\(\Forb(U,w;x,y)\).

The toric stack \(U\) is described by a stacky fan.  In the free-lattice
form used here, a stacky fan
\(\bSigma=(\Sigma,\{b_\rho\}_{\rho\in\Sigma(1)})\) consists of a
rational fan \(\Sigma\subset M_\R\) and a nonzero lattice vector
\(b_\rho\) on each ray.  When \(\Sigma\) is simplicial, this data gives a
smooth toric Deligne--Mumford stack \(U_{\bSigma}\) by the Cox
construction of Borisov--Chen--Smith \cite{BorisovChenSmith}.  The
nonprimitive ray vectors record the stack structure, and the associated
Box elements index the inertia sectors.

Let \(M\) and \(N=M^\vee\) be dual lattices, and let \(\bSigma\) and
\(\cbSigma\) be simplicial stacky fans in \(M\) and \(N\), respectively.
Following Clarke \cite{ClarkeDualFans}, the two fans form a dual pair
if
\begin{equation}\label{eq:intro-clarke-nonnegative-pairing}
 \langle |\Sigma|,|\check\Sigma|\rangle\geq0.
\end{equation}
Under this condition, the stacky rays of either fan define regular
Laurent monomials on the toric stack associated with the other.  Choose
generic ray-supported potentials \(g\) on \(U_{\bSigma}\) and
\(\check g\) on \(U_{\cbSigma}\) from the rays of \(\cbSigma\) and
\(\bSigma\), respectively.  The resulting Landau--Ginzburg models form
a stacky Clarke mirror pair.  Harder--Lee's mirror formula states that
\begin{equation}\label{eq:intro-harder-lee-mirror}
 f_{\mathrm{orb}}^{\lambda,\mu}(U_{\bSigma},g)
 =
 f_{\mathrm{orb}}^{d-\lambda,\mu}(U_{\cbSigma},\check g),
 \qquad
 d=\operatorname{rank}M,
\end{equation}
for all \(\lambda,\mu\in\Q\); see
\cite[Theorem~5.16]{HarderLee}.  Their proof computes both sides by a
cellular residue complex.  The question considered here is how this
construction behaves when the fans are not assumed to be simplicial.

\subsection{The stringy extension problem}

The classical theory of stringy Hodge numbers suggests what should
happen when the simpliciality assumption is removed.  Stringy Hodge
numbers extend Hodge-theoretic invariants to varieties with suitable
singularities; when a crepant resolution exists, they agree with the
Hodge numbers of the resolution and are therefore independent of its
choice \cite{BatyrevDais,BatyrevStringy}.  In the toric setting, inertia
sectors of a smooth toric Deligne--Mumford stack encode the corresponding
local stringy contributions \cite{BorisovChenSmith,Yasuda}.

Dropping simpliciality from a stacky fan leads to a related but
different geometric situation.  The natural toric quotient stack still
exists, but in general it is an Artin stack rather than a
Deligne--Mumford stack.  The orbifold irregular Hodge theory recalled
above does not directly assign invariants to this object.  We therefore
do not use the non-simplicial stack as a geometric model.  Instead, we
work with a combinatorial object \((\bSigma,f)\), where \(\bSigma\) is a
possibly non-simplicial stacky fan and \(f\) is a Laurent polynomial
that is regular on its toric variety.  We call this a toric
Landau--Ginzburg datum.  A projective crepant simplicial subdivision
\[
 \boldsymbol\Gamma\longrightarrow\bSigma
\]
produces a smooth toric Deligne--Mumford Landau--Ginzburg model
\((U_{\boldsymbol\Gamma},f)\), to which the Harder--Lee calculation and
the preceding Deligne--Mumford irregular Hodge theory apply.  The
stringy extension problem is to show that the resulting orbifold
irregular Hodge numbers are independent of the chosen subdivision.

\begin{theorem}[Crepant subdivision invariance]
\label{thm:intro-polytope-formula}
Let \((\bSigma,f)\) be a toric Landau--Ginzburg datum and put
\(Q:=P_{f,\infty}\).  Then
\[
 \Forb(U_{\boldsymbol\Gamma},h;x,y)
\]
is independent of the projective crepant simplicial subdivision
\(\boldsymbol\Gamma\to\bSigma\) and of the choice of \(h\) among
Newton nondegenerate potentials with Newton polytope at infinity
\(Q\).
\end{theorem}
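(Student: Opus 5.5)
The plan is to express \(\Forb(U_{\boldsymbol\Gamma},h;x,y)\) through the filtered Koszul--de Rham complex over toric face rings announced in the abstract, and to show that this complex depends only on \((\bSigma,Q)\), not on \(\boldsymbol\Gamma\) or \(h\).

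First, for a fixed projective crepant simplicial subdivision \(\boldsymbol\Gamma\to\bSigma\), I would use the strict filtered comparison with the Harder--Lee cellular residue complex. This identifies the orbifold irregular Hodge numbers of \((U_{\boldsymbol\Gamma},h)\) with the graded dimensions of the cohomology of the filtered Koszul--de Rham complex \(K^\bullet(\boldsymbol\Gamma,h)\). This complex is built on the toric face ring \(\C[\boldsymbol\Gamma]\), with the Box/age data built into its \(\Q\)-grading. Nondegeneracy at infinity should make this cohomology concentrated in a single degree. The Hodge polynomial then becomes a graded Euler characteristic, and ultimately a Hilbert--Poincaré series of a Jacobian-type quotient \(\C[\boldsymbol\Gamma]/(h,\,x_i\partial_i h)\), graded by the Newton filtration of \(Q\) and the weight/age grading.

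Second, I would compare the face rings. Because the subdivision is crepant, every lattice point of \(|\Sigma|\) has the same "age" (the value of the piecewise linear function taking value \(1\) at the \(b_\rho\)) whether it is computed with \(\boldsymbol\Gamma\) or with \(\bSigma\). So \(\C[\boldsymbol\Gamma]\) and \(\C[\bSigma]\) have the same underlying graded vector space, namely the span of the lattice points in \(|\Sigma|\). They differ only in multiplication: two monomials multiply nonzero only when they lie in a common cone. I would put a filtration on both rings by Newton degree with respect to \(Q\) and by the \(\bSigma\)-cone structure. Both then degenerate to the same associated graded ring, a Stanley--Reisner-type ring of the polyhedral face poset of \(\bSigma\) refined by \(Q\), and the Koszul sequence from \(h\) and its logarithmic derivatives remains regular on the associated graded. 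This is the standard Kouchnirenko/Batyrev argument. A spectral sequence then shows that the bigraded Hilbert series of the cohomology equals that of the associated graded complex, which depends only on \(\bSigma\) and \(Q\). The same argument removes the dependence on \(h\): two nondegenerate potentials with Newton polytope at infinity \(Q\) have identical leading forms up to coefficients, and the associated graded Hilbert series ignores coefficients.

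The main obstacle is regularity on the associated graded in the non-simplicial setting. Face rings of non-simplicial fans need not be Cohen--Macaulay in general. Here I would use that \(|\Sigma|\) is a convex cone, or at least the support of a projective fan, so that the face ring \(\C[\bSigma]\) is Cohen--Macaulay by Hochster/Stanley-type results for shellable polyhedral complexes. Nondegeneracy at infinity of \(h\) on each face of \(Q\) then supplies a linear system of parameters, exactly as in Batyrev's computation of \(E\)-polynomials. If this Cohen--Macaulay route fails, the fallback is to compare \(\boldsymbol\Gamma\) with a common refinement via a sequence of star subdivisions. For each star subdivision I would check invariance by a local computation on the one non-simplicial cone being subdivided, using the crepant age-preservation above.
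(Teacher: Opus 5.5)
There is a genuine gap. Your first step fails, and everything after it depends on it.

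The potential $h$ lies in $\C[N]$, the coordinate ring of $T_M$. The toric face ring $\C[\boldsymbol\Gamma]$ is spanned by the lattice points of $|\Sigma|\subset M_\R$, so $\C[\boldsymbol\Gamma]/(h,x_i\partial_i h)$ is not defined. In the paper the two roles are carried by two different fans, glued along the orthogonality fan $\Phi=(\Sigma\oplus\check\Sigma)_0$ with $\check\Sigma=\Sigma_Q$:
\begin{itemize}
\item the $M$-side rings $\C[c]^\Gamma$ carry the Koszul differential of an auxiliary mirror potential $\check g$ with Newton polytope $P$; its Koszul homology $\C[c]/(x^{b_1},\ldots,x^{b_d})$ is what produces the Box sectors;
\item $h$ acts only on the $N$-side rings $\C[\check c]^{\check\Gamma}$, through the twisted logarithmic de Rham differential.
\end{itemize}
The cohomology is also not concentrated in one degree, since neither completeness nor full-dimensionality is assumed. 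Already $\Sigma=\{0\}$, $h=y_1$ on $T_M=(\C^*)^2$ is nondegenerate at infinity. Its twisted cohomology lives in degrees $1$ and $2$, while its Jacobian ring is zero. So $\Forb$ is neither a graded Euler characteristic nor the Hilbert series of a single Jacobian ring. The actual answer $\mathscr I_{P,Q}$ contains the support series $I_{\Phi,\vartheta}$, i.e.\ local intersection cohomology data of $\Phi$.

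For the same reason, your spectral-sequence step has no degeneration mechanism. A filtration only bounds the cohomology of the filtered complex by that of its associated graded, and without concentration you cannot conclude equality. What is missing are the paper's two structural inputs.
\begin{enumerate}
\item The filtered direct image, \cref{thm:filtered-toric-face-direct-image}. The complex on the simplicial fan $\widehat\Phi=(\Gamma\oplus\check\Gamma)_0$ computes $\Forb$ via Harder--Lee, and its $R\pi_*$ is the complex on the base fan built from the rings $\C[c]^\Gamma$. This is exactly where your correct observation enters: crepancy fixes the graded vector space and changes only the product $\star_\Gamma$.
\item Purity. Cohen--Macaulayness of each $\C[c]^\Gamma$ is needed only conewise, since $\Gamma|_c$ is a ball; you do not need a global face ring. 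Together with facewise regularity on both sides, it makes the coefficient sheaves pure. The decomposition theorem of \cite{BBFK} then gives an honest splitting $\mathscr C_\Phi^{\Gamma,\check\Gamma}\simeq\bigoplus_\vartheta V_\vartheta^{\Gamma,\check\Gamma}\otimes\mathcal K_{\Phi,\vartheta}$. Here $V_\vartheta$ has zero differential and Hilbert series a product of $l^*$-polynomials, and $\mathcal K_{\Phi,\vartheta}$ is independent of $\Gamma$ and of the coefficients. So no degeneration argument is needed (\cref{prop:toric-face-pure-decomposition,prop:toric-face-hilbert}).
\end{enumerate}

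Two further points. Independence of $h$ cannot come from leading forms, because the Harder--Lee comparison is available only for ray-supported potentials chosen generically. The paper handles arbitrary nondegenerate $h$ by coefficient invariance \cite{WangCoefficient}. Your fallback through common refinements also fails, already for the cone over the unit square. Its two crepant triangulations have a common refinement with a new ray through $(\tfrac12,\tfrac12,1)$. That ray contains no lattice point of height one, so the refinement is not crepant, and a non-crepant step changes the invariants.
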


We do not construct a motivic measure with values in irregular Hodge
structures or an irregular Hodge theory for toric Artin stacks.
The related invariance theorem of Qin--Zhang applies to a fixed smooth
quasi-projective variety and therefore does not compare the toric stacks
arising from different subdivisions \cite{QinZhang}.

\subsection{Outline of the proof}

To prove subdivision invariance, we associate to a Clarke mirror pair
a filtered Koszul--de Rham complex over the toric face rings of its
orthogonality fan.  More precisely, let \(\bSigma\) and
\(\cbSigma\) be stacky fans in dual lattices whose supports pair
nonnegatively, and put
\[
\Phi=(\Sigma\oplus\check\Sigma)_0
 :=\left\{
 c\oplus\check c\in\Sigma\oplus\check\Sigma:
 \langle c,\check c\rangle=0
 \right\}.
\]
For projective crepant subdivisions \(\Gamma \to \Sigma\) and
\(\check\Gamma \to \check\Sigma\), let
\[
\pi:\widehat\Phi:=(\Gamma\oplus\check\Gamma)_0\longrightarrow\Phi
\]
be the induced subdivision.  Given potentials \(g\) and \(\check g\)
on the two fans, we define on \(\Phi\) a filtered complex
\[
\left( \widehat{\mathscr C}^{\mathrm{dR},N;
 \Gamma,\check\Gamma}_\Phi(\check g,g), F^{\bullet} \right).
\]
Locally at \(\eta=c\oplus\check c\in\Phi\), the complex admits a
factorization as the tensor product of an effective Koszul complex
\(\cK_c^{\mathrm{eff},\Gamma}(\check g)\), determined by the monomials
of \(\check g\) lying in \(c\), and a twisted logarithmic de Rham complex
\(\cD_{\Phi,g}^{\check\Gamma}(\eta)\), determined by the monomials of
\(g\) lying in \(\check c\).  Symbols without subdivision superscripts,
such as
\[
 \widehat{\mathscr C}^{\mathrm{dR},N}_\Phi,
 \qquad
 \cK_c^{\mathrm{eff}}(\check g),
 \qquad
 \cD_{\Phi,g}(\eta)
\]
denote the corresponding objects for the trivial subdivisions.

First, when \(\Gamma\) and \(\check\Gamma\) are simplicial and the
potentials are supported on their rays, there is a strict filtered
quasi-isomorphism
between the Harder--Lee cellular residue complex and
\(\widehat{\mathscr C}^{\mathrm{dR},N}_{\widehat\Phi}\).  At
\(\widehat\eta=\gamma\oplus\check\gamma\in\widehat\Phi\), the factor
\(\cK_\gamma^{\mathrm{eff}}(\widehat{\check g})\) accounts for the Box
elements indexing the inertia sectors, while
\(\cD_{\widehat\Phi,\widehat g}(\widehat\eta)\) is the twisted
logarithmic de Rham complex on the corresponding toric stratum.  It
follows that the bigraded hypercohomology of the associated graded
complex computes the orbifold irregular Hodge numbers; see
\cref{thm:simplicial-comparison}.

Second, 
by \cref{thm:filtered-toric-face-direct-image}, there is a
canonical filtered
quasi-isomorphism
\begin{equation}\label{eq:intro-filtered-direct-image}
 \widehat{\mathscr C}^{\mathrm{dR},N;
 \Gamma,\check\Gamma}_\Phi(\check g,g)
 \simeq
 R\pi_*
 \widehat{\mathscr C}^{\mathrm{dR},N}_{\widehat\Phi}
 (\widehat{\check g},\widehat g).
\end{equation}
Thus the filtered complex on the simplicial refinement can be computed
on the original orthogonality fan.

Third, write
\[
 \mathscr C_\Phi^{\Gamma,\check\Gamma}(\check g,g)
 :=
\Gr_F\widehat{\mathscr C}^{\mathrm{dR},N;
 \Gamma,\check\Gamma}_\Phi(\check g,g)
\]
for the associated graded complex.  It is a toric face-ring version of
the double Koszul complex introduced by Borisov \cite{BorisovString}.
The complex
\(\mathscr C_\Phi^{\Gamma,\check\Gamma}(\check g,g)\) is a complex of
modules over the sheaf \(\mathcal A_\Phi\) of conewise polynomial
functions, and its differential is \(\mathcal A_\Phi\)-linear.  The
decomposition theorem for pure sheaves on fans
\cite{BBFK,BresslerLunts}, applied to its pure coefficient sheaf, gives
a primitive--support decomposition
\[
 \mathscr C_\Phi^{\Gamma,\check\Gamma}(\check g,g)
 \simeq
 \bigoplus_{\vartheta\in\Phi}
 V_\vartheta^{\Gamma,\check\Gamma}
 \otimes_\C\mathcal K_{\Phi,\vartheta}.
\]
Following \cite{BorisovMavlyutov}, the spaces
\(V_\vartheta^{\Gamma,\check\Gamma}\) are constructed from primitive
quotients.  They contain the dependence on the potentials and the
subdivisions, whereas the support complexes
\(\mathcal K_{\Phi,\vartheta}\) depend only on the orthogonality fan and
the lattice pairing.  Under the facewise regularity hypotheses, the
bigraded Hilbert series of
\(V_\vartheta^{\Gamma,\check\Gamma}\) is a product of local weighted
\(h^*\)-polynomials \cite{StapledonWeighted,KatzStapledonLocal} and is
therefore independent of the subdivisions and the potential
coefficients.  The primitive--support decomposition
then shows that the bigraded Hilbert series of
\[
 \bH^\bullet\!\left(
  \Phi,
  \mathscr C_\Phi^{\Gamma,\check\Gamma}(\check g,g)
 \right)
\]
has the same independence; see
\cref{prop:toric-face-pure-decomposition,prop:toric-face-hilbert}.

The three comparisons can be carried out simultaneously for a generic
choice of coefficients.  Coefficient invariance
\cite[Corollary~6.6]{WangCoefficient} then extends the result to every
Newton nondegenerate potential with the prescribed Newton polytope at
infinity.  Suppressing the potentials from the notation, the proof is
summarized by
\begin{align*}
 \Forb(U_{\boldsymbol\Gamma},h;x,y)
 &=
 \operatorname{Hilb}_{x,y}
 \bH^\bullet(\widehat\Phi,\mathscr C_{\widehat\Phi})\\
 &=
 \operatorname{Hilb}_{x,y}
 \bH^\bullet\!\left(
  \Phi,\mathscr C_\Phi^{\Gamma,\check\Gamma}
 \right)
 =
 \operatorname{Hilb}_{x,y}
 \bH^\bullet\!\left(
  \Phi,\mathscr C_\Phi
 \right).
\end{align*}
The first equality comes from the strict filtered comparison with the
Harder--Lee complex, the second from filtered direct image, and the
independence of the last expression follows from the primitive--support
decomposition.  Applying this argument to the spanning fans of
\(P_{\bSigma}\) and \(P_{f,\infty}\) proves
\cref{thm:intro-polytope-formula}.

\subsection{Organization of the paper}

\Cref{sec:lg-background} recalls orbifold irregular Hodge numbers and
toric Landau--Ginzburg data.  In
\cref{sec:koszul-de-rham-complexes} we construct the filtered
Koszul--de Rham complexes and prove the filtered direct-image theorem,
and \cref{sec:simplicial-realization} establishes the comparison with
the Harder--Lee residue complex.  The necessary results on pure sheaves
and weighted Ehrhart theory, followed by the primitive--support
decomposition and the intrinsic Hilbert-series calculation, are given
in
\cref{sec:pure-sheaves-weighted-ehrhart,sec:non-simplicial-associated-graded}.
Finally, \cref{sec:stringy-irregular-hodge-numbers} proves the polytope
formula and defines the stringy irregular Hodge numbers.
 \section{Toric Landau--Ginzburg models and orbifold irregular
Hodge numbers}
\label{sec:lg-background}

\subsection{Orbifold irregular Hodge numbers}
\label{subsec:lg-orbifold-conventions}

\begin{definition}[Orbifold irregular Hodge numbers]
\label{def:orbifold-irregular-hodge-numbers}
A \emph{smooth Deligne--Mumford Landau--Ginzburg model} is a pair
\((U,w)\), where \(U\) is a smooth separated Deligne--Mumford stack of
finite type over \(\C\) and \(w:U\to\A^1\) is a regular function.  Its
twisted de Rham cohomology is
\[
 H^k(U,w)
 :=
 \bH^k\!\left(
  U,(\Omega_U^\bullet,d+dw\wedge)
 \right).
\]
Write
\[
 IU:=U\times_{U\times U}U
\]
for the inertia stack of \(U\), where both maps to \(U\times U\) are
the diagonal, and let \(e:IU\to U\) be the evaluation morphism.  Its
geometric objects are pairs \((u,\gamma)\), where \(u\) is an object
of \(U\) and \(\gamma\in\operatorname{Aut}(u)\).  Write
\[
 IU=\coprod_{\mathfrak g\in\pi_0(IU)}U_{\mathfrak g}.
\]
For each connected component \(U_{\mathfrak g}\), let
\(a_{\mathfrak g}\) be its age and put
\(w_{\mathfrak g}:=(e^*w)|_{U_{\mathfrak g}}\).  For
\(\lambda,\mu\in\Q\), define
\begin{align}
 f_{\mathrm{orb}}^{\lambda,\mu}(U,w)
 &:={}
 \sum_{\mathfrak g\in\pi_0(IU)}
 \dim_\C
 \Gr_{F_{\mathrm{irr}}}^{\lambda-a_{\mathfrak g}}
 H^{\lambda+\mu-2a_{\mathfrak g}}
 (U_{\mathfrak g},w_{\mathfrak g}),
 \label{eq:orbifold-irregular-hodge-numbers}\\
 \Forb(U,w;x,y)
 &:={}
 \sum_{\lambda,\mu}
 f_{\mathrm{orb}}^{\lambda,\mu}(U,w)x^\lambda y^\mu.
 \label{eq:orbifold-irregular-hodge-polynomial}
\end{align}
Here a summand is understood to be zero unless
\(\lambda+\mu-2a_{\mathfrak g}\in\Z\).
\end{definition}

All generating series in this article are finite and are regarded as
elements of the integral group ring of \(\Q^2\):
\begin{equation}\label{eq:rational-bigraded-coefficient-ring}
 \Z[\Q^2]
 =\bigcup_{d\geq1}
  \Z[x^{\pm1/d},y^{\pm1/d}].
\end{equation}
The element \((\lambda,\mu)\in\Q^2\) is written as the monomial
\(x^\lambda y^\mu\).  For a finite-dimensional \(\Q^2\)-bigraded
vector space \(V\), and for a finite-dimensional \(\Q\)-graded vector
space \(W\), we write
\begin{equation*}\operatorname{Hilb}_{x,y}V
 :=\sum_{\lambda,\mu\in\Q}
   \dim_\C V^{\lambda,\mu}x^\lambda y^\mu,
 \qquad
 \operatorname{Hilb}W
 :=\sum_{\alpha\in\Q}\dim_\C W^\alpha T^\alpha.
\end{equation*}
Subscripts on \(\operatorname{Hilb}\) record the series variables; for
example, \(\operatorname{Hilb}_{U,V}\) denotes the analogous
bigraded series in \(U,V\).

Our notion of a smooth Deligne--Mumford Landau--Ginzburg model is the
one in \cite[Definition~2.2]{WangCompactification}.
The rational irregular Hodge filtration
\(F_{\mathrm{irr}}\) was introduced by Yu for smooth quasi-projective
varieties \cite[\S1]{Yu}.
We use the intrinsic irregular Hodge filtration of
\cite[Theorem~5.6]{WangCompactification},
\cite[Definition~3.9]{HarderLee}.

\subsection{Toric Landau--Ginzburg data and mirror pairs}
\label{subsec:toric-lg-data}

Let \(M\) and \(N=M^\vee\) be dual lattices.  For a lattice \(L\), let
\(T_L:=\operatorname{Spec}\C[L^\vee]\) be the torus with cocharacter
lattice \(L\).  For a rational cone \(c\subset L_\R\), write
\[
 c_\Q:=\Span_\Q(c\cap L),
 \qquad
 c_\C:=c_\Q\otimes_\Q\C.
\]

\begin{definition}[Stacky fan and radial support]
\label{def:stacky-fan}
A \emph{stacky fan} in \(M\) is a pair
\[
 \bSigma=(\Sigma,\{b_\rho\}_{\rho\in\Sigma(1)})
\]
consisting of a finite rational fan \(\Sigma\) of strongly convex
cones in \(M_\R\) and a nonzero lattice vector
\(b_\rho\in\rho\cap M\) on each ray.  
The stacky fan \(\bSigma\) is called
\emph{simplicial} if its underlying fan \(\Sigma\) is simplicial.

This notion is more general than the notion of a stacky fan in
\cite[Definition~3.1]{BorisovChenSmith}: we require neither that
\(\Sigma\) be simplicial nor that \(\bSigma[1]\) span \(M_\Q\).

We use the following notation, where \(c\in\Sigma\):
\[
 \bSigma[1]:=\{b_\rho:\rho\in\Sigma(1)\},
 \qquad
 \Delta_c
 :=\Conv\bigl(\{0\}\cup
       \{b_\rho:\rho\preceq c\}\bigr),
 \qquad
 P_{\bSigma}:=\bigcup_{c\in\Sigma}\Delta_c.
\]
The set \(P_{\bSigma}\) is called the \emph{radial stacky support}.

The stacky fan is \emph{stacky log-\(\Q\)-Gorenstein} if, for every
\(c\in\Sigma\), there is a rational linear form
\(\psi_c\in c_\Q^*\) such that
\(\psi_c(b_\rho)=1\) for every ray \(\rho\preceq c\).
These forms are unique and compatible on faces, and hence glue to a
positive rational conewise linear function
\begin{align*}
 \psi=\psi_{\bSigma}&:|\Sigma|\longrightarrow\R,\\
 \psi(m)&=\inf\{t\geq0:m\in tP_{\bSigma}\},
\end{align*}
called the \emph{height function}; it takes rational values on lattice
points.  
\end{definition}

\begin{definition}[Toric Landau--Ginzburg datum]
\label{def:toric-lg-datum}
A \emph{toric Landau--Ginzburg datum} is a pair \((\bSigma,f)\),
where \(\bSigma\) is a stacky fan in \(M\) and \(f\in\C[N]\).  
We impose the following conditions:
\begin{enumerate}[label=\textup{(\roman*)},leftmargin=*]
\item \(f\) is regular on the toric variety \(X_\Sigma\);
\item the underlying fan \(\Sigma\) is quasi-projective;
\item \(\bSigma\) is stacky log-\(\Q\)-Gorenstein;
\item the radial support \(P_{\bSigma}\) is convex;
\end{enumerate}
No full-dimensionality condition is imposed on either
\(P_{\bSigma}\) or \(P_{f,\infty}\).
The datum is called \emph{simplicial} if \(\bSigma\) is simplicial.

Write \(f=\sum_{n\in N}f_ny^n\).  
Its Newton polytope at infinity is
\begin{equation*}P_{f,\infty}
 :=\Conv\bigl(\{0\}\cup\operatorname{Supp}(f)\bigr)
 \subset N_\R, \qquad \text{where } \operatorname{Supp}(f) = \{n \in N | f_n \neq 0\}.
\end{equation*}
For any polytope \(R\) containing \(0\), a face
\(F\preceq R\) is called a \emph{face at infinity} if
\(0\notin F\).  For a face
\(F\preceq P_{f,\infty}\) at infinity, its face truncation and
logarithmic derivatives are
\[
 f_F:=\sum_{n\in F\cap N}f_ny^n,
 \qquad
 \partial_m f_F
 :=\sum_{n\in F\cap N}\langle m,n\rangle f_ny^n
 \quad(m\in M).
\]
The Laurent polynomial \(f\) is \emph{nondegenerate at
infinity} if, for every face \(F\) at infinity, the functions
\(\partial_m f_F\), for \(m\in M\), have no common zero on
the dense torus \(T_M=\operatorname{Spec}\C[N]\).

The toric Landau--Ginzburg datum \((\bSigma,f)\) is called 
nondegenerate at infinity if its potential \(f\) is nondegenerate at
infinity.
\end{definition}

\begin{remark}
The cells \(\Delta_c\) and all their faces form a coherent lattice
polytopal subdivision \(\mathcal S_{\bSigma}\) of \(P_{\bSigma}\):
it is obtained by subdividing the faces at infinity
and coning every resulting cell to \(0\).  Thus a toric
Landau--Ginzburg datum is equivalently specified by a lattice polytope
\(P\subset M_\R\) containing \(0\), such a coherent lattice polytopal
subdivision \(\mathcal S\) of \(P\), a lattice polytope
\(Q\subset N_\R\) containing \(0\) and satisfying
\[
 \langle P,Q\rangle\geq0,
\]
meaning that \(\langle m,n\rangle\geq0\) for every
\((m,n)\in P\times Q\),
and a Laurent polynomial \(f\in\C[N]\) with
\(P_{f,\infty}=Q\).  The fan \(\Sigma\) is recovered by taking the
cones over the boundary cells and using their vertices as the stacky
ray vectors; the displayed inequality is equivalent to the regularity
of \(f\) on \(X_\Sigma\).  The datum is simplicial precisely when
\(\mathcal S\) is a triangulation.
\end{remark}

For a simplicial datum \((\bSigma,f)\), put
\[
 M_\Sigma:=M\cap\Span_\R|\Sigma|.
\]
The stacky rays span \((M_\Sigma)_\Q\), so the Cox construction in
\(M_\Sigma\) gives a smooth toric Deligne--Mumford stack
\(\mathcal X_{\bSigma,M_\Sigma}\) \cite{BorisovChenSmith}.  Set
\begin{equation*}U_{\bSigma}
 :=\left[
  \bigl(T_M\times\mathcal X_{\bSigma,M_\Sigma}\bigr)
  \big/T_{M_\Sigma}
 \right],
\end{equation*}
where \(s\in T_{M_\Sigma}\) acts by
\(s\cdot(t,x)=(ts^{-1},s\cdot x)\).
Then \(U_{\bSigma}\) has coarse space \(X_\Sigma\). 
It is the stacky toric
Landau--Ginzburg model used in the Harder--Lee construction; in
particular,
\(f_{\mathrm{orb}}^{\lambda,\mu}(U_{\bSigma},f)\) is well defined.
If \(f\) is nondegenerate at infinity, orbifold coefficient
invariance \cite[Corollary~6.6]{WangCoefficient} shows that this number
depends only on \(U_{\bSigma}\) and \(P_{f,\infty}\).  
We denote the common value by
\[
 f_{\mathrm{orb}}^{\lambda,\mu}
 (U_{\bSigma},P_{f,\infty})
\]
and denote
\begin{equation*}\Forb(U_{\bSigma},P_{f,\infty};x,y)
 :=\sum_{\lambda,\mu}
 f_{\mathrm{orb}}^{\lambda,\mu}(U_{\bSigma},P_{f,\infty})x^\lambda y^\mu.
\end{equation*}

This allows us to use representatives adapted to the height grading.
For a lattice polytope \(Q\),
call \(g\in\C[N]\) a \emph{height-one representative of \(Q\)} if
its support lies on the faces at infinity and
\(P_{g,\infty}=Q\).

\begin{remark}
For a non-simplicial datum, we do not associate a geometric model
before choosing a projective crepant simplicial subdivision
\(\boldsymbol\Gamma\to\bSigma\), which produces the model
\((U_{\boldsymbol\Gamma},f)\) used
below.  A direct interpretation in terms of a toric Artin stack is
possible, but
will not be used here.  There is also a coarse geometric
interpretation: writing \(b_\rho=\beta_\rho v_\rho\), with \(v_\rho\)
primitive, the boundary
\[
 B_\Sigma
 :=\sum_{\rho\in\Sigma(1)}
   \left(1-\frac1{\beta_\rho}\right)D_\rho
\]
makes \((X_\Sigma,B_\Sigma)\) a toric klt pair, and the stacky
log-\(\Q\)-Gorenstein height records the \(\Q\)-Cartier divisor
\(K_{X_\Sigma}+B_\Sigma\).  Thus coarse log klt pairs provide a
parallel geometric background for the non-simplicial data.
\end{remark}

\begin{definition}[Mirror pair of toric Landau--Ginzburg data]
\label{def:toric-lg-mirror-pair}
Let
\[
 (\bSigma,f),
 \qquad
 (\cbSigma,\check f)
\]
be toric Landau--Ginzburg data in the dual lattices \(M\) and \(N\),
respectively.  They form a \emph{mirror pair} if
\begin{equation*}P_{f,\infty}=P_{\cbSigma},
 \qquad
 P_{\check f,\infty}=P_{\bSigma}.
\end{equation*}
\end{definition}

To construct mirror partners, we use the canonical fan associated
with a Newton polytope.
Let \(P\subset M_\R\) be a lattice polytope containing \(0\).
The \emph{spanning fan of \(P\) based at \(0\)} is
\begin{equation*}\Sigma_P
 :=\{\operatorname{Cone}(F)\mid
      F\text{ is a face at infinity of }P\}.
\end{equation*}
Here the empty face is included and
\(\operatorname{Cone}(\varnothing)=\{0\}\).
Its stacky vector on the ray through a nonzero vertex \(v\) of \(P\)
is \(v\) itself; the resulting stacky fan is denoted by
\(\boldsymbol\Sigma_P\).
It is clear that \(\boldsymbol\Sigma_P\) is quasi-projective and
stacky log-\(\Q\)-Gorenstein.

For \((\bSigma,f)\), 
choose a Laurent polynomial \(\check f\) with Newton polytope at
infinity \(P_{\bSigma}\). 
Then \(\check f\) is regular on \(X_{\Sigma_{P_{f,\infty}}}\) and
\((\boldsymbol\Sigma_{P_{f,\infty}},\check f)\) is a toric
Landau--Ginzburg datum and is a mirror partner of \((\bSigma,f)\).

For the remainder of the article, a fixed mirror pair will be written
as \((\bSigma,f)\) and \((\cbSigma,\check f)\), with the standing
polytope notation
\begin{equation}\label{eq:mirror-pair-polytope-notation}
 P:=P_{\bSigma}=P_{\check f,\infty},
 \qquad
 Q:=P_{\cbSigma}=P_{f,\infty}.
\end{equation}

A \emph{subdivision of stacky fans}
\(\boldsymbol\Gamma\to\bSigma\) consists of a subdivision
\(\Gamma\to\Sigma\) of the underlying fans, with the original stacky
vectors retained and with a nonzero lattice vector chosen on every
new ray.  It is \emph{crepant} if every new stacky vector lies on the
height-one locus of \(\psi_{\bSigma}\).
It is \emph{ray-preserving} if
\(\Gamma(1)=\Sigma(1)\), in which case it is automatically crepant.
The terms \emph{simplicial} and \emph{projective} refer to the
underlying fan and fan subdivision, respectively.

\begin{proposition}
\label{prop:simplicial-presentations-common-coarsening}
Let \(\bSigma\) be a quasi-projective stacky
log-\(\Q\)-Gorenstein fan with convex radial support \(P=P_{\bSigma}\).
\begin{enumerate}
\item
The natural map
\begin{equation*}\bSigma\longrightarrow\boldsymbol\Sigma_P
\end{equation*}
is a projective crepant subdivision.

\item
There is a projective ray-preserving crepant simplicial subdivision
\begin{equation*}
 \boldsymbol\Gamma\longrightarrow\bSigma.
\end{equation*}
\end{enumerate}
\end{proposition}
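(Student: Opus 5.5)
For part (1), the plan is to show that every cone of \(\Sigma\) lies in a cone of \(\Sigma_P\), that the two fans have the same support, that the stacky vectors are compatible, and that the coarsening is crepant and projective.

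Since \(P=\bigcup_c\Delta_c\) is convex and contains \(0\), we have \(|\Sigma|=\bigcup_c\operatorname{Cone}(\Delta_c)=\operatorname{Cone}(P)=|\Sigma_P|\). The height function \(\psi=\psi_{\bSigma}\) is the gauge function of \(P\) on \(|\Sigma|\). For each \(c\in\Sigma\), the linear form \(\psi_c\) satisfies \(\psi_c=1\) on the facet \(\Conv\{b_\rho:\rho\preceq c\}\) of \(\Delta_c\) not containing \(0\). That facet, call it \(\Delta_c^\infty\), lies in the boundary \(\{\psi=1\}\) of \(P\) inside \(\operatorname{Cone}(P)\). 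Because \(\psi\) is convex (being the gauge of a convex set) and linear on \(\Delta_c^\infty\), the set \(\Delta_c^\infty\) lies in a single face at infinity \(F\) of \(P\). To see this, take \(F\) to be the smallest face of \(P\) containing \(\Delta_c^\infty\); it is a face at infinity because \(\psi\equiv1\) on it. Then \(c=\operatorname{Cone}(\Delta_c^\infty)\subseteq\operatorname{Cone}(F)\).

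The cells \(\Delta_c^\infty\) cover the faces at infinity, so the cones \(\operatorname{Cone}(F)\) are unions of cones of \(\Sigma\); hence \(\Sigma\) subdivides \(\Sigma_P\). For the stacky data, note that every ray of \(\Sigma_P\) is spanned by a vertex \(v\) of \(P\). Such a vertex must be a vertex of some \(\Delta_c\), hence equal to some \(b_\rho\), so the original stacky vectors are retained. Every other \(b_\rho\) lies on \(\{\psi=1\}\), which is exactly the height-one locus of \(\psi_{\boldsymbol\Sigma_P}\); this gives crepancy.

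For projectivity, I would use that \(\mathcal S_{\bSigma}\) is a coherent subdivision of \(P\). Alternatively, quasi-projectivity of \(\Sigma\) supplies a strictly convex conewise linear function \(\varphi\) on \(\Sigma\). Restricting \(\varphi\) to \(P\) and taking the lower hull over each face at infinity gives a function that is strictly convex on \(\Sigma\) relative to \(\Sigma_P\). Checking that relative strict convexity is exactly what a projective subdivision requires, face by face, is the main point I would verify carefully.

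For part (2), I would take a regular triangulation of the point configuration \(\{0\}\cup\bSigma[1]\) that refines \(\mathcal S_{\bSigma}\) and uses no new vertices. This is obtained by perturbing the coherent height function of \(\mathcal S_{\bSigma}\) by a small generic height on the \(b_\rho\) (placing/pulling refinement). Coning the boundary simplices gives a simplicial fan \(\Gamma\) with \(\Gamma(1)=\Sigma(1)\), which is ray-preserving and hence crepant. The sum of the generic perturbation with the pullback of the convex function from (1) is strictly convex on \(\Gamma\), which gives projectivity of \(\Gamma\to\Sigma\). The main obstacle is ensuring that the triangulation uses only the vertices \(b_\rho\), so that no new rays appear. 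This holds because a regular refinement with generic small heights assigned only to existing vertices never introduces new points.
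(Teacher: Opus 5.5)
Your proposal is correct and is essentially the paper's argument. In (1) the paper likewise takes a strictly convex support function for \(\Sigma\) as the relatively strictly convex function over \(\Sigma_P\); your gauge and smallest-face argument supplies the refinement and crepancy details the paper leaves implicit. In (2) your generic perturbation of the coherent height function of \(\mathcal S_{\bSigma}\) plays the role of the paper's pulling triangulation with \(0\) ordered first. You should state one thing explicitly: the paper gets the star-at-\(0\) property from that ordering, and in your version it is automatic, since each \(\Delta_c\) is a pyramid with apex \(0\) over \(\Conv\{b_\rho:\rho\preceq c\}\subset\{\psi_c=1\}\) and no new vertices occur. This property is what lets the lifting function, normalized to vanish at \(0\), extend homogeneously to the convex support function on \(\Gamma\) that you assert.
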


\begin{proof}
\begin{enumerate}
\item
It is clear that $\bSigma\longrightarrow\boldsymbol\Sigma_P$ is crepant.
A strictly convex support function witnessing the quasi-projectivity of \(\Sigma\) is
relatively strictly convex over \(\Sigma_P\), so the subdivision is
projective.

\item
The polytopes \(\Delta_c\) for every \(c\in\Sigma\) and all their faces form a finite
polytopal complex.  
Choose a total order on its vertices with \(0\) first and take the associated pulling
triangulation \cite[\S4.3.2]{Triangulations}.  It is coherent, restricts
compatibly to every face,
and triangulates each \(\Delta_c\) as a star at \(0\).  Deleting
\(0\) and taking cones therefore gives a simplicial fan with exactly
the original rays.  The coherent lifting function, normalized to
vanish at \(0\), extends homogeneously to a relatively strictly convex
support function; hence the subdivision is projective.  
\end{enumerate}
\end{proof}

A \emph{subdivision of a toric Landau--Ginzburg datum} is a morphism
\[
 (\boldsymbol\Gamma,\widehat f)
 \longrightarrow(\bSigma,f)
\]
induced by a subdivision of stacky fans such that
\((\boldsymbol\Gamma,\widehat f)\) is again a toric
Landau--Ginzburg datum, with
\(\widehat f\) equal to the pullback of \(f\) along the toric morphism
\(X_\Gamma\to X_\Sigma\).  Since this morphism is the identity on the
dense torus, \(\widehat f\) is the same Laurent polynomial as \(f\),
and in particular
\(P_{\widehat f,\infty}=P_{f,\infty}\).  
 \section{Koszul--de Rham complexes over toric face rings}
\label{sec:koszul-de-rham-complexes}

Retain the notation for mirror pairs from
\eqref{eq:mirror-pair-polytope-notation}, and choose height-one
representatives
\[
 g=\sum_{v\in N}\xi_vy^v\in\C[N],\qquad \check g=\sum_{u\in M}\check\xi_ux^u\in\C[M]
\]
of \(Q\) and \(P\), respectively.  The \emph{orthogonality fan} is
the subfan
\begin{equation*}\Phi=(\Sigma\oplus\check\Sigma)_0
 :=\left\{
 c\oplus\check c\in\Sigma\oplus\check\Sigma:
 \langle c,\check c\rangle=0
 \right\}.
\end{equation*}
We write the cones of \(\Phi\) as
\(\eta=(c,\check c)\), and denote the two projections by
\[
 p_M:\Phi\longrightarrow\Sigma,
 \qquad
 p_N:\Phi\longrightarrow\check\Sigma.
\]

Fix crepant subdivisions of Landau--Ginzburg data
\[
 (\boldsymbol\Gamma,\widehat g)
 \longrightarrow(\bSigma,g),
 \qquad
 (\check{\boldsymbol\Gamma},\widehat{\check g})
 \longrightarrow(\cbSigma,\check g).
\]
They induce a subdivision
\begin{equation*}\pi:\widehat\Phi
 :=(\Gamma\oplus\check\Gamma)_0
 \longrightarrow\Phi.
\end{equation*}
Indeed, for any $(\gamma,\check\gamma) \in \widehat\Phi$, if 
$\pi(\gamma,\check\gamma) = (c, \check c) \in \Sigma\oplus\check\Sigma$,
i.e. $c$ ($\check c$, resp.) is the smallest cone of $\Sigma$  ($\check \Sigma$, resp.)  containing $\gamma$ ($\check \gamma$, resp.),
then nonnegativity of the pairing makes
\(c\cap(\Span_\R\check\gamma)^\perp\) a face of \(c\) containing
\(\gamma\).
Hence $c = c\cap(\Span_\R\check\gamma)^\perp$, i.e. $c \perp \check \gamma$.
Therefore $\check c \cap c^{\perp}$ is a face of \(\check c\) containing
\(\check \gamma\).
Hence $\check c = \check c \cap c^{\perp}$, i.e. $(c, \check c) \in \Phi$.

\subsection{The filtered complex}

We use the standard description of sheaves on a fan as sheaves on a
finite space
\cite[\S\S0.A and 0.C]{BBFK}.  Regard a finite fan \(\Psi\) as
the Alexandrov space for the face order, so that its open subsets are
the subfans.  For \(\eta\in\Psi\), put
\[
 \langle\eta\rangle
 :=\{\gamma\in\Psi:\gamma\preceq\eta\},
 \qquad
 \partial\eta:=\langle\eta\rangle\setminus\{\eta\},
 \qquad
 \operatorname{Star}_\Psi(\eta)
 :=\{\gamma\in\Psi:\eta\preceq\gamma\}.
\]
A sheaf \(\mathcal F\) on \(\Psi\) is equivalently a collection of
stalks
\[
 \mathcal F_\eta
 :=\Gamma(\langle\eta\rangle,\mathcal F)
\]
and compatible face restrictions
\(\rho_{\eta\gamma}:\mathcal F_\eta\to\mathcal F_\gamma\) for
\(\gamma\preceq\eta\).  Thus, for a subfan \(\Psi_0\subseteq\Psi\),
\begin{equation*}\Gamma(\Psi_0,\mathcal F)
 =
 \left\{
  (s_\eta)_{\eta\in\Psi_0}:
  \rho_{\eta\gamma}(s_\eta)=s_\gamma
  \text{ whenever }\gamma\preceq\eta
 \right\}.
\end{equation*}

If \(\phi:\Psi'\to\Psi\) is order-preserving, then
\begin{equation*}(R^q\phi_*\mathcal F)_\eta
 =
 H^q\!\left(\phi^{-1}\langle\eta\rangle,\mathcal F\right).
\end{equation*}
For sheaves \(\mathcal F_i\) on \(\Psi_i\), their external product is
characterized by
\[
 (\mathcal F_1\boxtimes\mathcal F_2)_{(\eta_1,\eta_2)}
 =
 \mathcal F_{1,\eta_1}\otimes_\C\mathcal F_{2,\eta_2}.
\]

\begin{definition}\label{def:presented-toric-face-algebras}
A subdivision \(\pi_\Gamma:\Gamma\to\Sigma\) defines on \(\Sigma\)
the toric face algebra sheaf \(\mathscr S_\Sigma^\Gamma\) with stalk
\begin{equation*}(\mathscr S_\Sigma^\Gamma)_c
 =\C[c]^\Gamma
 =\bigoplus_{m\in c\cap M}\C x^m
\end{equation*}
and product
\begin{equation*}x^m\star_\Gamma x^{m'}
 =
 \begin{cases}
  x^{m+m'},&m,m'\text{ lie in a common cone of }\Gamma|_c,\\
  0,&\text{otherwise}.
 \end{cases}
\end{equation*}
For \(d\preceq c\), restriction fixes \(x^m\) when \(m\in d\) and
sends it to zero otherwise.  For the trivial subdivision, write
\[
 \mathscr S_\Sigma:=\mathscr S_\Sigma^{\mathrm{id}},
 \qquad (\mathscr S_\Sigma)_c=\C[c].
\]
The same notation, with checked symbols, is used on
\(\check\Sigma\).
\end{definition}

All these toric face algebras carry their height gradings:
\[
 \deg(x^m)=\psi(m),
 \qquad
 \deg(y^n)=\check\psi(n).
\]

\begin{lemma}\label{lem:semigroup-toric-face-direct-image}
There are canonical isomorphisms of algebra sheaves
\begin{equation*}(\pi_\Gamma)_*\mathscr S_\Gamma
 \cong\mathscr S_\Sigma^\Gamma,
 \qquad
 (\pi_{\check\Gamma})_*\mathscr S_{\check\Gamma}
 \cong\mathscr S_{\check\Sigma}^{\check\Gamma},
\end{equation*}
and both sheaves on the refined fans are acyclic for the corresponding
direct image functor.  Consequently,
\begin{equation*}R(\pi_\Gamma)_*\mathscr S_\Gamma
 \simeq\mathscr S_\Sigma^\Gamma,
 \qquad
 R(\pi_{\check\Gamma})_*\mathscr S_{\check\Gamma}
 \simeq\mathscr S_{\check\Sigma}^{\check\Gamma}.
\end{equation*}
\end{lemma}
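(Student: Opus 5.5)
The plan is to compute everything stalkwise on \(\Sigma\).  For \(c\in\Sigma\), the preimage \(\pi_\Gamma^{-1}\langle c\rangle\) is the subfan \(\Gamma|_c\) of cones of \(\Gamma\) contained in \(c\).  It is an open subset of the Alexandrov space \(\Gamma\), and its support is \(c\).  By the description of \(R^q\phi_*\) recalled above, we need to establish two things:
\[
 \Gamma(\Gamma|_c,\mathscr S_\Gamma)\cong\C[c]^\Gamma,
 \qquad
 H^q(\Gamma|_c,\mathscr S_\Gamma)=0\quad(q>0),
\]
and we need both to be compatible with the face restrictions for \(d\preceq c\).  The checked statement is identical, with \(M\) replaced by \(N\).

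For global sections, recall that a section of \(\mathscr S_\Gamma\) over \(\Gamma|_c\) is a compatible family \((s_\gamma)\) with \(s_\gamma\in\C[\gamma]\).  Compatibility means that each \(s_\gamma\) restricts on faces by truncation to the monomials lying in the face.  We decompose by lattice points: \(\mathscr S_\Gamma=\bigoplus_{m\in M}\mathscr S^{(m)}\), where \(\mathscr S^{(m)}\) is the subsheaf spanned by \(x^m\).  Then \(\mathscr S^{(m)}\) is the constant sheaf \(\C\) extended by zero from the closed-upward set \(\operatorname{Star}(\sigma_m)\), where \(\sigma_m\) is the smallest cone of \(\Gamma\) containing \(m\).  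Equivalently, it is the pushforward of the constant sheaf on the star.

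Its sections over \(\Gamma|_c\) are \(\C\) when \(m\in c\), since the set \(\{\gamma\in\Gamma|_c:m\in\gamma\}\) is connected: all its members contain \(\sigma_m\), which is itself in the set.  When \(m\notin c\), the sections are \(0\).  This recovers \(\bigoplus_{m\in c\cap M}\C x^m\) as a vector space, and restriction to \(d\) keeps exactly the \(x^m\) with \(m\in d\).

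For the product, the section \(x^m\) has components \(x^m\) on the cones containing \(m\).  The product of \(x^m\) and \(x^{m'}\) is computed conewise.  It is \(x^{m+m'}\) on the cones containing both \(m\) and \(m'\), and zero elsewhere.  If \(m\) and \(m'\) lie in a common cone of \(\Gamma|_c\), then the smallest such cone \(\sigma\) contains \(\sigma_{m+m'}\).  The conewise product is then \(x^{m+m'}\) on \(\operatorname{Star}(\sigma)\).  We must check that this equals the section \(x^{m+m'}\), which lives on \(\operatorname{Star}(\sigma_{m+m'})\).  The key step is that a cone containing \(m+m'\) must contain both \(m\) and \(m'\).  This holds because the minimal cone containing a point of the relative interior of a sum is forced.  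More precisely, \(m+m'\in\operatorname{relint}\sigma'\) with \(\sigma'\preceq\sigma\) implies \(m,m'\in\sigma'\), since \(\sigma'\) is a face of \(\sigma\).  If \(m\) and \(m'\) do not lie in a common cone, every component of the product vanishes.  This is exactly \(\star_\Gamma\).

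For acyclicity, it suffices to show \(H^q(\Gamma|_c,\mathscr S^{(m)})=0\) for \(q>0\).  Here \(H^q(\Gamma|_c,\mathscr S^{(m)})\) is the cohomology of the constant sheaf on the open-in-\(\Gamma|_c\)-complement situation.  Concretely, \(\mathscr S^{(m)}=j_!\) or \(i_*\), depending on conventions; we must be careful here, and this is the main obstacle.  Since \(\operatorname{Star}(\sigma_m)\) is closed in the Alexandrov topology (open sets are subfans, closed under faces), \(\mathscr S^{(m)}=i_*\underline{\C}\).  Thus \(H^q(\Gamma|_c,\mathscr S^{(m)})=H^q(\operatorname{Star}(\sigma_m)\cap\Gamma|_c,\underline\C)\).

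This last space has the least element \(\sigma_m\) in the face order, and \(\langle\sigma_m\rangle\) is its smallest open neighbourhood.  Consequently the set is the smallest open set of a point, and the constant sheaf on it is flasque-acyclic: global sections equal the stalk at the minimum.  More cleanly, a finite poset with a minimum is contractible, and the constant sheaf there has no higher cohomology.

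The compatibility of the isomorphisms with restriction from \(c\) to \(d\) is then immediate from the explicit description, which finishes the proof.
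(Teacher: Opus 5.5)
Your proposal is correct and is essentially the paper's proof: restrict to \(\Gamma|_c=\pi_\Gamma^{-1}\langle c\rangle\), split \(\mathscr S_\Gamma\) into monomial summands, identify each summand with the constant sheaf on \(\operatorname{Star}(\sigma_m)\) extended by zero, read off the product and face maps from the compatible sections \(s_m\), and deduce acyclicity from the fact that this star has the minimum \(\sigma_m\). One slip needs fixing: a set with a minimum is not the smallest open neighbourhood of a point (in this topology that would require a maximum), so keep only your fallback argument (contractibility, or directly that every nonempty open subset of the star contains \(\sigma_m\), which makes the constant sheaf flasque), and add the remark that on the finite space \(\Gamma|_c\) cohomology commutes with the infinite direct sum over \(m\).
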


\begin{proof}
Over \(\pi_{\Gamma}^{-1}\left(\langle c \rangle\right) = \Gamma|_c\), we have
$$
\Gamma\left(\Gamma|_c, \mathscr S_\Gamma\right)
 = 
\operatorname{span}_{\mathbb{C}}\left\{s_m : m\in c\cap M\right\},
$$
where the compatible monomial section \(s_m\) is equal
to \(x^m\) on the cones containing \(m\) and is zero elsewhere.  
The sections \(s_m\), for \(m\in c\cap M\), identify the algebra of
compatible sections with \(\C[c]^\Gamma\), including its product and
face maps.

For the higher direct images, decompose the restricted sheaf into its
monomial summands.  The \(m\)-summand is the constant rank-one sheaf on
\[
 \operatorname{Star}_{\Gamma|_c}(\gamma(m)),
\]
extended by zero, where \(\gamma(m)\) is the unique cone whose relative
interior contains \(m\).  This star has the minimum
\(\gamma(m)\), so its augmented order complex is contractible.  Every
monomial summand is therefore acyclic.  Since \(\Gamma|_c\) is finite,
the monomial decomposition commutes with cohomology.  This proves the
first derived isomorphism; the checked statement is identical.
\end{proof}

\begin{definition}\label{def:toric-face-koszul-de-rham-pair}
The following sheaves on \(\Phi\) form the factors of the
construction.

\begin{enumerate}[label=\textup{(\Roman*)},leftmargin=*,itemsep=.65\baselineskip]
\item
Put
$$
 \Lambda_N:=\bigwedge^\bullet N_\C
$$
and let \(\underline\Lambda_N\) be the corresponding constant sheaf.

\item The exterior algebra sheaf \(\cE_\Phi\) has stalk
\begin{equation*}\cE_{\Phi,\eta}:=\bigwedge^\bullet c^\perp \end{equation*}
with its standard grading and zero differential.  
For
\(\eta'=(d,\check d)\preceq\eta\), its face map is induced by
\(c^\perp\hookrightarrow d^\perp\).

\item 
For every homogeneous tensor
\(z=x^my^n\otimes\omega\) occurring below, where
$m \in M$, $n \in N$ and 
\(\omega\in\bigwedge^kN_\C\), put
\[
 a(z):=k+\psi(m)-\check\psi(n),
 \qquad
 r(z):=\psi(m)+\check\psi(n),
\]
with an absent monomial factor understood to have exponent zero, and
set
\begin{equation}\label{eq:n-polarized-total-degree}
 \deg_N(z):=k+2\psi(m)=a(z)+r(z).
\end{equation}

\item The full Koszul complex is
\begin{equation*}\cK_{\Phi,\check g}^{\Gamma}
 :=\left(
 p_M^{-1}\mathscr S_\Sigma^\Gamma
 \otimes\underline\Lambda_N,
 D_{\check g}
\right).
\end{equation*}
At \(\eta=(c,\check c)\),
\begin{equation*}\cK_{\Phi,\check g}^{\Gamma}(\eta)
 =\left(
 \C[c]^\Gamma\otimes \bigwedge^\bullet N_\C,
 D_{\check g}
\right),
\end{equation*}
where
\begin{align*}
 D_{\check g}(x^m\otimes\omega)
 &=
\sum_{u\in c\cap M}
 \check\xi_u
 (x^u\star_\Gamma x^m)\otimes\iota_u\omega.
\end{align*}
Exterior multiplication on the right makes this a right
dg-\(\cE_\Phi\)-module: every \(u\in c\) annihilates \(c^\perp\).

\item The residual twisted logarithmic de Rham complex is
\begin{equation*}\cD_{\Phi,g}^{\check\Gamma}
 :=\left(
 p_N^{-1}\mathscr S_{\check\Sigma}^{\check\Gamma}
 \otimes\cE_\Phi,
 d_{\log}+D_g \right).
\end{equation*}
Its stalk is
\begin{equation}\label{eq:residual-complex}
 \cD_{\Phi,g}^{\check\Gamma}(\eta)
 =\left(
 \C[\check c]^{\check\Gamma}\otimes\bigwedge^\bullet c^\perp, d_{\log}+D_g \right),
\end{equation}
where
\[
 d_{\log}(y^n\otimes\omega)=y^n\otimes(n\wedge\omega),
 \qquad
D_g(y^n\otimes\omega)
 =
 \sum_{v\in\check c\cap N}
 \xi_v
 (y^v\star_{\check\Gamma}y^n)\otimes(v\wedge\omega).
\]
Orthogonality gives \(\check c_\C\subseteq c^\perp\), 
so the differential is well-defined.  
Exterior multiplication on the left makes
\(\cD_{\Phi,g}^{\check\Gamma}\) a left dg-\(\cE_\Phi\)-module.

\item
The Koszul--de Rham complex is the tensor product
\begin{equation*}\widehat{\mathscr C}^{\mathrm{dR},N;
 \Gamma,\check\Gamma}_\Phi(\check g,g)
 :=
 \cK_{\Phi,\check g}^{\Gamma}
 \otimes_{\cE_\Phi}
 \cD_{\Phi,g}^{\check\Gamma}
 \cong
 p_M^{-1}\mathscr S_\Sigma^\Gamma
 \otimes p_N^{-1}\mathscr S_{\check\Sigma}^{\check\Gamma}
 \otimes\underline\Lambda_N,
\end{equation*}
Stalkwise, \(\Lambda_N\) is free over
\(\bigwedge c^{\perp}\), so the displayed
underived tensor product computes its derived counterpart.  
At \(\eta\),
\begin{align}\label{eq:total-local-differential}
 D(x^my^n\otimes\omega)
 ={}&(D_{\check g}+D_g+d_{\log})(x^my^n\otimes\omega) \notag\\
 ={}&\sum_{u\in c\cap M}\check\xi_u
 (x^u\star_\Gamma x^m)y^n\otimes\iota_u\omega
 \notag\\
 &+\sum_{v\in\check c\cap N}\xi_vx^m
 (y^v\star_{\check\Gamma}y^n)\otimes(v\wedge\omega)
 +x^my^n\otimes(n\wedge\omega).
\end{align}
The contraction term $D_{\check g}$ anticommutes with both wedge terms $D_g$ and $d_{\log}$ because
\(c\perp\check c\).  
Hence \(D^2=0\).

\item
Define the decreasing filtration stalkwise by
\[
 F^\lambda
 \widehat{\mathscr C}^{\mathrm{dR},N;
 \Gamma,\check\Gamma}_{\Phi,\eta}(\check g,g)
 :=
 \bigoplus_{a(z)\geq\lambda}\C z.
\]
With respect to the \((a,r)\)-degrees,
\[
 \deg(D_{\check g})=\deg(D_g)=(0,1),
 \qquad
 \deg(d_{\log})=(1,0).
\]

\end{enumerate}

For trivial subdivisions, omit \(\Gamma,\check\Gamma\) from all
superscripts; in particular, write
\(\widehat{\mathscr C}^{\mathrm{dR},N}_\Phi\) for the corresponding
semigroup algebra complex.
\end{definition}

\begin{remark}[Relation with Borisov's double Koszul complexes]
When \(\Sigma\) and \(\check\Sigma\) are the face fans of dual
reflexive Gorenstein cones and the subdivisions are trivial, the
complex just defined is a sheafwise contraction--wedge realization of
Borisov's double Koszul construction.  After passing to \(\Gr_F\), the
logarithmic term disappears, and global sections recover Borisov's
ordinary double Koszul complex
\cite[equation~(3.1)]{BorisovString}; this associated-graded
description is made explicit in
\cref{prop:graded-total-local-form}.  Before passing to \(\Gr_F\), the
logarithmic wedge term in \eqref{eq:total-local-differential}
corresponds to the extra term in Borisov's modified differential
\cite[Definition~4.6]{BorisovString}.
\end{remark}

Choose a positive integer \(L_0\) 
such that all grading indices below then lie in
\(\frac1{L_0}\Z\), and the corresponding Hilbert series lie in the ring
\(\Z[x^{\pm1/L_0},y^{\pm1/L_0}]\) of
\eqref{eq:rational-bigraded-coefficient-ring}.  We use the convention
\begin{equation}\label{eq:associated-graded-convention}
 F^{>\lambda}:=\sum_{\lambda'>\lambda}F^{\lambda'},
 \qquad
 \Gr_F^\lambda:=F^\lambda/F^{>\lambda},
 \qquad
 \Gr_F:=\bigoplus_{\lambda\in\frac1{L_0}\Z}\Gr_F^\lambda.
\end{equation}
The filtration is split according to the monomial exponents and the
exterior degree.  It is compatible with all face maps.

\begin{theorem}\label{thm:filtered-toric-face-direct-image}
For the subdivision
\(\pi:\widehat\Phi\to\Phi\), 
we have 
a canonical filtered morphism that is a
quasi-isomorphism on every filtered piece:
\begin{equation*}\widehat{\mathscr C}^{\mathrm{dR},N;
 \Gamma,\check\Gamma}_\Phi(\check g,g)
 \simeq
 R\pi_*
 \widehat{\mathscr C}^{\mathrm{dR},N}_{\widehat\Phi}
 (\widehat{\check g},\widehat g).
\end{equation*}
\end{theorem}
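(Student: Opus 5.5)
The plan is to compute \(R\pi_*\) stalkwise, using a monomial decomposition exactly as in the proof of \cref{lem:semigroup-toric-face-direct-image}. The argument is run on the underlying graded sheaves, which is legitimate because the filtration is split by monomials and exterior degree. By the last isomorphism in \cref{def:toric-face-koszul-de-rham-pair}(VI), the underlying graded sheaf of \(\widehat{\mathscr C}^{\mathrm{dR},N}_{\widehat\Phi}(\widehat{\check g},\widehat g)\) is
\[
p_M^{-1}\mathscr S_\Gamma\otimes p_N^{-1}\mathscr S_{\check\Gamma}\otimes\underline\Lambda_N ,
\]
and it decomposes as a direct sum of summands indexed by triples \((m,n,\omega)\), with \(\omega\) running over a basis of \(\Lambda_N\).

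For \(\eta=(c,\check c)\in\Phi\), the \((m,n,\omega)\)-summand restricted to \(\pi^{-1}\langle\eta\rangle\) is the constant rank-one sheaf on
\[
S_{m,n}:=\{(\gamma,\check\gamma)\in\widehat\Phi:\ \pi(\gamma,\check\gamma)\preceq\eta,\ \gamma\succeq\gamma(m),\ \check\gamma\succeq\check\gamma(n)\},
\]
extended by zero. Here \(\gamma(m)\) and \(\check\gamma(n)\) are the cones whose relative interiors contain \(m\) and \(n\).

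\emph{Main step.} I will show that \(S_{m,n}\) is empty unless \(m\in c\) and \(n\in\check c\), and that otherwise it has the minimum \((\gamma(m),\check\gamma(n))\).
\begin{itemize}
\item If \(m\in c\) and \(n\in\check c\), then \(\gamma(m)\subseteq c\) and \(\check\gamma(n)\subseteq\check c\), so the orthogonality \(c\perp\check c\) forces \(\gamma(m)\perp\check\gamma(n)\). Hence \((\gamma(m),\check\gamma(n))\in\widehat\Phi\), and it lies over a face of \(\eta\).
\item Since \(\widehat\Phi\) is a subfan, faces of cones in \(S_{m,n}\) remain in \(\widehat\Phi\), so this pair is indeed a minimum of \(S_{m,n}\).
\end{itemize}
A poset with a minimum has contractible order complex, so each summand is acyclic on \(\pi^{-1}\langle\eta\rangle\). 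Because the fans are finite, cohomology commutes with the monomial decomposition. Therefore \(R^q\pi_*=0\) for \(q>0\), and
\[
(\pi_*)_\eta\cong \C[c]^\Gamma\otimes\C[\check c]^{\check\Gamma}\otimes\Lambda_N ,
\]
with basis the compatible monomial sections \(s_{m}s_{n}\otimes\omega\). The products agree with \(\star_\Gamma,\star_{\check\Gamma}\) by \cref{lem:semigroup-toric-face-direct-image}, and the face maps match as well.

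\emph{The canonical morphism and the differential.} Define the morphism
\[
\widehat{\mathscr C}^{\mathrm{dR},N;\Gamma,\check\Gamma}_\Phi\to\pi_*\widehat{\mathscr C}^{\mathrm{dR},N}_{\widehat\Phi}\to R\pi_*\widehat{\mathscr C}^{\mathrm{dR},N}_{\widehat\Phi}
\]
by \(x^my^n\otimes\omega\mapsto s_ms_n\otimes\omega\). It remains to check compatibility with the differential \eqref{eq:total-local-differential}. On a cone \((\gamma,\check\gamma)\) containing \((m,n)\), the refined differential sums over \(u\in\gamma\cap M\) and \(v\in\check\gamma\cap N\). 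On the coarse side, a term with \(u\in c\) contributes nonzero on \((\gamma,\check\gamma)\) exactly when \(u\) and \(m\) lie in a common cone of \(\Gamma\), which is precisely what \(\star_\Gamma\) records; the wedge terms behave in the same way. The coefficients of \(\widehat g,\widehat{\check g}\) equal those of \(g,\check g\), since subdivision does not change the Laurent polynomial. The term \(d_{\log}\) is visibly compatible.

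\emph{Filtration.} The filtration is preserved because the heights are unchanged: crepancy gives \(\psi_\Gamma=\psi_\Sigma\) and \(\check\psi_{\check\Gamma}=\check\psi_{\check\Sigma}\), so the degrees \(a(z)\) and \(r(z)\) coincide on both sides. Since both filtrations are split by the same monomial–exterior basis, the isomorphism restricts to each \(F^\lambda\) and each \(\Gr_F^\lambda\), which gives a quasi-isomorphism on every filtered piece.

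I expect the only delicate point to be the main step, namely that the relevant subposet has a minimum. This in turn rests on the orthogonality inclusion \(\gamma(m)\perp\check\gamma(n)\), together with the fact, shown earlier in the paper, that \(\pi\) is well defined.
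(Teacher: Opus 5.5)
Your proposal is correct and is essentially the paper's argument. The paper identifies $\pi^{-1}\langle\eta\rangle$ with $\Gamma|_c\times\check\Gamma|_{\check c}$ and combines the K\"unneth formula with \cref{lem:semigroup-toric-face-direct-image}, whose proof is the same monomialwise ``support has a minimum'' acyclicity you run directly on the preimage; it then matches the differential with $\star_\Gamma$, $\star_{\check\Gamma}$ and uses the unchanged heights to preserve $F$, as you do.
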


\begin{proof}
Fix \(\eta=(c,\check c)\in\Phi\).  The inverse image of its basic
face neighbourhood is
\[
 \Gamma|_c\times\check\Gamma|_{\check c},
\]
because every face of \(c\) is orthogonal to every face of
\(\check c\).  By the K\"unneth isomorphism for finite posets and
\cref{lem:semigroup-toric-face-direct-image}, its derived coefficient
sections are
\begin{align*}
 &R\Gamma(\Gamma|_c,\mathscr S_\Gamma)
 \otimes
 R\Gamma(\check\Gamma|_{\check c},\mathscr S_{\check\Gamma})
 \otimes\Lambda_N\\
 &\hspace{4em}\simeq
 \C[c]^\Gamma\otimes
 \C[\check c]^{\check\Gamma}\otimes\Lambda_N,
\end{align*}
concentrated in direct image degree zero.  These identifications commute
with restriction to faces.  Each fixed degree coset of the total
complex is bounded below, so these termwise acyclic sheaves compute its
derived direct image.

On compatible monomial sections, multiplication by \(x^u\) and \(y^v\)
becomes \(\star_\Gamma\) and \(\star_{\check\Gamma}\), respectively;
the contraction, wedge, and logarithmic operators on \(\Lambda_N\)
are unchanged.  Hence the induced differential on the direct image is
exactly
\eqref{eq:total-local-differential}.  
Since the height functions restrict to the refined cones, the
preceding identifications preserve the \((a,r)\)-bigrading and hence
the filtration \(F\).  The monomialwise K\"unneth argument applies on
every \(F^\lambda\), and compatibility with face restrictions gives
the claimed filtered quasi-isomorphism.
\end{proof}

\subsection{The effective Koszul complex and local decomposition}

For \(\eta=(c,\check c)\), put
\[
 \overline{N}_c:=N_\C/c^\perp
 \cong c_\C^\vee,
\]
and let \(\bar\iota_u\) be the contraction on
\(\bigwedge^\bullet\overline{N}_c\) induced by \(u\in c\).  The
\emph{effective Koszul complex} is
\begin{equation*}\cK_c^{\mathrm{eff},\Gamma}(\check g)
 :=\left(
 \C[c]^\Gamma\otimes\bigwedge^\bullet\overline{N}_c,
 \bar D_{\check g}
 \right),
\end{equation*}
where
\[
 \bar D_{\check g}(x^m\otimes\bar\omega)
 :=\sum_u\check\xi_u
 (x^u\star_\Gamma x^m)\otimes\bar\iota_u\bar\omega.
\]
For the trivial subdivision, write simply
\(\cK_c^{\mathrm{eff}}(\check g)\).

\begin{proposition}\label{prop:local-factorization}
Write
\[
 \widehat{\mathscr C}_\eta
 :=\widehat{\mathscr C}^{\mathrm{dR},N;
 \Gamma,\check\Gamma}_{\Phi,\eta}(\check g,g),
\]
and let \((c^\perp)^p\) denote the \(p\)-th power of the exterior ideal
generated by \(c^\perp\) in \(\Lambda_N\).  Then:

\begin{enumerate}[label=\textup{(\roman*)},leftmargin=*,itemsep=.5\baselineskip]
\item The finite decreasing filtration
\begin{equation*}G^p\cK_{\Phi,\check g}^{\Gamma}(\eta)
 :=\C[c]^\Gamma\otimes(c^\perp)^p
\end{equation*}
is stable under the differential, and
\begin{equation*}\Gr_G\cK_{\Phi,\check g}^{\Gamma}(\eta)
 \cong
 \cK_c^{\mathrm{eff},\Gamma}(\check g)
 \otimes\bigwedge^\bullet c^\perp.
\end{equation*}

\item It induces on the relative tensor product the filtration
\begin{equation*}G^p\widehat{\mathscr C}_\eta
 =\C[c]^\Gamma\otimes\C[\check c]^{\check\Gamma}
 \otimes(c^\perp)^p,
\end{equation*}
which is compatible with the total differential and the face maps.
Its associated graded is
\begin{equation*}\Gr_G\widehat{\mathscr C}_\eta
 \cong
 \cK_c^{\mathrm{eff},\Gamma}(\check g)
 \otimes
 \left(
 \C[\check c]^{\check\Gamma}
 \otimes\bigwedge^\bullet c^\perp,0
 \right).
\end{equation*}

\item Every splitting of
\[
 0\longrightarrow c^\perp\longrightarrow N_\C
 \longrightarrow\overline{N}_c\longrightarrow0
\]
splits the two filtrations and gives noncanonical decompositions of
filtered complexes
\begin{align}
 \cK_{\Phi,\check g}^{\Gamma}(\eta)
 &\cong
 \cK_c^{\mathrm{eff},\Gamma}(\check g)
 \otimes\bigwedge^\bullet c^\perp,
 \label{eq:full-effective-koszul-relation}\\
 \widehat{\mathscr C}_\eta
 &\cong
 \cK_c^{\mathrm{eff},\Gamma}(\check g)
 \otimes\cD_{\Phi,g}^{\check\Gamma}(\eta).
 \label{eq:local-split-factorization}
\end{align}
\end{enumerate}
The differential in both associated graded formulas is
\(\bar D_{\check g}\) on the effective Koszul factor and
zero on the remaining factors.  These statements do not require
simpliciality, projectivity, ray support, or nondegeneracy.
\end{proposition}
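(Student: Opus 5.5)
The whole proof is a direct computation in one stalk. It rests on one observation: the differential only involves contractions \(\iota_u\) with \(u\in c\), and these kill \(c^\perp\). For part (i), I would first note that \(\iota_u\) is a graded derivation of \(\Lambda_N\) vanishing on \(c^\perp\) when \(u\in c\). Hence, for \(\alpha_1,\dots,\alpha_p\in c^\perp\) and \(\beta\in\Lambda_N\),
\[
 \iota_u(\alpha_1\wedge\cdots\wedge\alpha_p\wedge\beta)=\pm\,\alpha_1\wedge\cdots\wedge\alpha_p\wedge\iota_u\beta .
\]
Multiplication by \(x^u\) does not touch the exterior factor, so \(D_{\check g}\) preserves \(G^p\). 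The filtration is finite because \((c^\perp)^p=0\) for \(p>\dim c^\perp\).

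For the associated graded, I would use the canonical isomorphism
\[
 (c^\perp)^p/(c^\perp)^{p+1}\cong\bigwedge^p c^\perp\otimes\bigwedge^\bullet\overline N_c .
\]
Under it, \(\iota_u\) acts as \(1\otimes\bar\iota_u\) up to the Koszul sign. This is the same derivation identity, since \(\iota_u\) descends to \(\overline N_c\cong c_\C^\vee\) exactly as \(\bar\iota_u\). This yields \(\cK_c^{\mathrm{eff},\Gamma}(\check g)\otimes\bigwedge^\bullet c^\perp\).

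For part (ii), the relative tensor product over \(\cE_{\Phi,\eta}=\bigwedge c^\perp\) is identified with \(\C[c]^\Gamma\otimes\C[\check c]^{\check\Gamma}\otimes\Lambda_N\), as in \cref{def:toric-face-koszul-de-rham-pair}. The induced filtration is therefore the displayed \(G^p\). I would then check the three pieces of \eqref{eq:total-local-differential}:
\begin{itemize}
\item \(D_{\check g}\) preserves \(G^p\) by part (i);
\item \(D_g\) and \(d_{\log}\) are left wedges with vectors \(v\in\check c\subseteq c^\perp\) and \(n\in\check c\cap N\subseteq c^\perp\), so they map \(G^p\) into \(G^{p+1}\).
\end{itemize}
Consequently they vanish on \(\Gr_G\), which leaves \(\bar D_{\check g}\) on the effective factor and zero on \(\C[\check c]^{\check\Gamma}\otimes\bigwedge c^\perp\). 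For compatibility with face maps at \(\eta'=(d,\check d)\preceq\eta\), I would use \(c^\perp\subseteq d^\perp\), which gives \((c^\perp)^p\subseteq(d^\perp)^p\), together with the fact that face restrictions on the algebras are monomial projections.

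For part (iii), I would choose a complement \(S\subset N_\C\) of \(c^\perp\), with \(S\cong\overline N_c\). Then \(\Lambda_N\cong\bigwedge S\otimes\bigwedge c^\perp\), and the wedge product defines an isomorphism of graded vector spaces, with \(G^p\) corresponding to \(\bigwedge S\otimes\bigwedge^{\geq p}c^\perp\). The main point to verify is that \(\iota_u\) for \(u\in c\) acts as \(\bar\iota_u\otimes 1\) with the standard sign. This holds because \(\iota_u\) is a derivation killing \(c^\perp\) and restricts on \(S\) to the pairing with \(u\), which is \(\bar\iota_u\) under \(S\cong\overline N_c\). This proves \eqref{eq:full-effective-koszul-relation} as complexes, and the filtration is split by construction.

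For \eqref{eq:local-split-factorization}, I would write the tensor product over \(\bigwedge c^\perp\) of \(\cK^{\mathrm{eff}}\otimes\bigwedge c^\perp\) with \(\cD\) as \(\cK^{\mathrm{eff}}\otimes\cD\). The wedge differentials \(D_g+d_{\log}\) only involve \(c^\perp\)-vectors and act on the left of \(\bigwedge c^\perp\). I expect the main obstacle to be the sign bookkeeping: one must check that these wedges commute with the placement of the \(\bigwedge S\) factor up to the Koszul sign, so that the total differential becomes \(\bar D_{\check g}\otimes1\pm1\otimes(d_{\log}+D_g)\). The cleanest way is to order the factors as \(\bigwedge c^\perp\wedge\bigwedge S\), wedging \(c^\perp\)-vectors on the left and contracting only the \(S\)-part, and to note that \(\bar D_{\check g}\) changes exterior degree by one, which produces the standard sign. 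The filtration \(F\) is also preserved, since the splitting is compatible with the monomial and exterior-degree grading. None of these steps uses simpliciality, projectivity, ray support, or nondegeneracy.
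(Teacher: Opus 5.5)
Your proposal is correct and follows the paper's proof essentially step for step. Contraction by \(u\in c\) kills \(c^\perp\), so it preserves the exterior-ideal filtration and induces \(\bar\iota_u\) on \(\Gr_G^p\bigwedge^\bullet N_\C\cong\bigwedge^\bullet\overline N_c\otimes\bigwedge^p c^\perp\). The wedge terms \(D_g\) and \(d_{\log}\) raise the \(G\)-index because \(\check c_\C\subseteq c^\perp\), face compatibility follows from \(c^\perp\subseteq d^\perp\), and a splitting separates the total differential as \(\bar D_{\check g}\otimes1+1\otimes(d_{\log}+D_g)\). Your Koszul-sign bookkeeping for the relative tensor product is simply a more explicit version of the paper's final step.
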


\begin{proof}
Put \(E=c^\perp\).  Since every \(u\in c\) annihilates \(E\),
contraction preserves the powers of the exterior ideal generated by
\(E\).  The canonical identification
\[
 \Gr_G^p\bigwedge^kN_\C
 \cong
 \bigwedge^{k-p}\overline{N}_c\otimes\bigwedge^pE
\]
intertwines the induced contraction with \(\bar\iota_u\), proving
\textup{(i)}.

After tensoring over \(\bigwedge E\), the induced filtration is again
the exterior ideal filtration.  It is compatible with faces because
\(c^\perp\subseteq d^\perp\) for \(d\preceq c\).  Since
\(\check c_\C\subseteq E\), both \(d_{\log}\) and
\(D_g\) raise the \(G\)-index and vanish
on \(\Gr_G\); the Koszul differential induces
\(\bar D_{\check g}\).  This proves \textup{(ii)}.

Finally, a splitting
\(N_\C\cong\overline{N}_c\oplus E\) separates the total differential
as
\[
 \bar D_{\check g}\otimes1
 +1\otimes(d_{\log}+D_g),
\]
which gives \textup{(iii)}.
\end{proof}
 \section{Simplicial realization of the Harder--Lee residue complex}
\label{sec:simplicial-realization}

Throughout this section, \(\bSigma\) and \(\cbSigma\) are simplicial
stacky fans, so no further subdivision is used in the toric face
algebras.  Write their stacky ray vectors as \(b_\rho\) and
\(\check b_{\check\rho}\), and choose ray-supported data
\begin{equation}\label{eq:hl-ray-supported-potential-data}
 g=\sum_{\check\rho\in\check\Sigma(1)}
 \xi_{\check\rho}y^{\check b_{\check\rho}},
 \qquad
 \check g
 :=\sum_{\rho\in\Sigma(1)}
 \check\xi_\rho x^{b_\rho},
\end{equation}
with all coefficients nonzero.  
For a simplicial cone \(c\) of dimension \(d\), with stacky ray
vectors \(b_1,\ldots,b_d\), put
\[
 \Boxop(c)
 :=
 \left\{
  p=\sum_{i=1}^dq_ib_i\in M:
  0\leq q_i<1
 \right\},
 \qquad
 \operatorname{age}(p):=\sum_{i=1}^dq_i.
\]
The inertia sectors of \(U_{\bSigma}\) are indexed by
\[
 \Boxop(\bSigma):=\bigcup_{c\in\Sigma}\Boxop(c),
\]
where each element is assigned to its minimal supporting cone.

\subsection{The Harder--Lee geometric model}

We recall the part of the Harder--Lee construction that gives a
geometric meaning to the complex defined in the preceding section.

Choose an integral strictly convex \(\check\Sigma\)-linear support
function \(\varphi\), whose existence follows from the
quasiprojectivity of \(\check\Sigma\).  Once a compatible refinement
of the normal fan is fixed, the construction of
\cite[Definition~9.7]{HarderLee} produces the fan \(\Sigma_\varphi\).
By \cite[Proposition~9.8]{HarderLee}, this fan satisfies
\cite[Assumption~9.1]{HarderLee}, and
\cite[Proposition~9.4]{HarderLee} then gives the toric quasi-stable
Landau--Ginzburg degeneration
\[
 \bigl(T(\Sigma_\varphi),D,w_\varphi,\pi\bigr),
 \qquad
 \pi:T(\Sigma_\varphi)\longrightarrow\A^1.
\]
Here \(D\) is the chosen horizontal toric boundary, containing the
reduced pole divisor of \(w_\varphi\), and \(w_\varphi\) is the
\(t\)-weighted degenerating potential.  
For \(0<|t|\ll1\), put
\[
 D_t:=D\cap\pi^{-1}(t),\qquad
 (U_t,w_t):=
 \bigl(\pi^{-1}(t)\setminus D_t,
       w_\varphi|_{\pi^{-1}(t)\setminus D_t}\bigr).
\]
We identify this reference nearby fibre with
\((U_{\bSigma},g)\).
The central
fibre \(X_0:=\pi^{-1}(0)\) is a reduced orbifold normal-crossings divisor whose
strata are toric.

Let \(\Lambda_{\mathrm{orb}}^\bullet\) be the twisted 
logarithmic de Rham complex on the central fibre.
More precisely, 
by \cite[Remark~9.23]{HarderLee}, the relevant components of the
relative inertia stack are quasi-stable degenerations of these
sectors.  If \(\Lambda_p^\bullet\) denotes the central restriction of
the relative twisted logarithmic de Rham complex on the \(p\)-sector,
viewed on \(X_0\) through evaluation, then
\[
 \Lambda_{\mathrm{orb}}^\bullet
 :=
 \bigoplus_{p\in\Boxop(\bSigma)}
 e_p\,\Lambda_p^\bullet.
\]
Here \(e_p\) is the formal label of the sector indexed by \(p\).

The degeneration results of
\cite[Theorem~8.12, Proposition~8.17, and
Theorem~9.21\textup{(1)}]{HarderLee}, applied componentwise as in
\cite[Remark~9.23]{HarderLee}, identify the orbifold irregular Hodge
numbers of the nearby fibre $U_t$ with the filtered cohomology of
the central complex:
\begin{align}\label{eq:hl-irregular-hodge-central-fibre}
 f_{\mathrm{orb}}^{\lambda,n-\lambda}(U_{\bSigma},g)
 =
 \dim_\C\Gr^\lambda_{F_{\mathrm{irr}}}
 \bH^n(X_0,\Lambda_{\mathrm{orb}}^\bullet).
\end{align}

The relevant intersections of the central and horizontal toric strata
are indexed by the face poset
\[
 \widetilde\Phi:=S_{T,D,0}
\]
of the refined tropical cell complex
\[
 T_{\Sigma_\varphi}(\Sigma_{0,D},w_\varphi)_0
\]
constructed in \cite[Section~9.3]{HarderLee}.
A cell of \(\widetilde\Phi\) is written
\((c,\sigma)\) and corresponds to a toric stratum \(T(\sigma)\) of the
central fibre.
Under the identification of \cite[Proposition~5.13]{HarderLee}, the
refinement gives the forgetful order-preserving map
\[
 s:\widetilde\Phi\longrightarrow\Phi, \qquad (c,\sigma) \mapsto (c,\check c).
\]

Restricting and taking residues of the relative
twisted logarithmic de Rham complex gives a filtered complex
\(\Lambda_{(c,\sigma)}^\bullet\) on this
stratum.  
Combining the toric description of
\cite[equations~(26)--(31)]{HarderLee}, the semigroup presentation
in the proof of \cite[Proposition~3.12]{HarderLee}, and the
Clarke-pair identification of
\cite[Proposition~5.13]{HarderLee}, we obtain
a strict filtered isomorphism
\[
 \Gamma\!\left(
  T(\sigma),\Lambda_{(c,\sigma)}^\bullet
 \right)
\cong
 \left(
  \C[\check c]\otimes\bigwedge^\bullet c^\perp,
  d_{\log}+D_g
 \right)
 =:
 \DRlog(c,\check c;g).
\]
Let \(\mathscr G_{\mathrm{orb}}^\bullet\) denote the cellular diagram
on \(\widetilde\Phi\) whose value at a cell
\((c,\sigma)\) is 
\[
 \bigl(\mathscr G_{\mathrm{orb}}^\bullet\bigr)_{(c,\sigma)}
 :=
 \bigoplus_{p\in\Boxop(c)}
 e_p\,
 \Gamma\!\left(
  T(\sigma),\Lambda_{(c,\sigma)}^\bullet
 \right)
 \cong
 \bigoplus_{p\in\Boxop(c)}
 e_p\,\DRlog(c,\check c;g).
\]
Its incidence morphisms are the maps
\(\alpha((c,\sigma),(c',\sigma'))\) of
\cite[equations~(27)--(28)]{HarderLee}.
Extend the grading convention of
Definition~\ref{def:toric-face-koszul-de-rham-pair} to the orbifold
sector labels by assigning \(e_p\) the \(M\)-height
\(\operatorname{age}(p)\).  Thus, for
\(\omega\in\bigwedge^kc^\perp\),
\begin{equation}\label{eq:orbifold-residue-degree-and-filtration}
 a(e_py^n\otimes\omega)
 =
 k+\operatorname{age}(p)-\check\psi(n),
 \qquad
 \deg_N(e_py^n\otimes\omega)
 =
 k+2\operatorname{age}(p).
\end{equation}
Hence the sector label \(e_p\) contributes
\(\operatorname{age}(p)\) to the filtration index and
\(2\operatorname{age}(p)\) to the internal cohomological degree.
These are respectively the shift of the irregular Hodge filtration 
and the Chen--Ruan cohomological degree shift;
compare \cite[Sections~2.4 and 4.4 and
equations~(24), (31)]{HarderLee}.

The residue resolution of
\cite[Proposition~9.19]{HarderLee}, together with the acyclicity
of its toric terms from \cite[Theorem~3.11]{HarderLee}, gives,
componentwise as in \cite[Remark~9.23]{HarderLee}, 
a filtered quasi-isomorphism
\begin{align}\label{eq:hl-residue-route}
 R\Gamma(X_0,\Lambda_{\mathrm{orb}}^\bullet)
 \simeq
 \operatorname{Tot}C^\bullet_{\mathrm{cell}}
 \bigl(\widetilde\Phi,\mathscr G_{\mathrm{orb}}^\bullet\bigr).
\end{align}
Here,
with the cellular convention of
\cite[Definition~4.10 and Proposition~9.12]{HarderLee}, 
\[
 C_{\mathrm{cell}}^q
 \bigl(\widetilde\Phi,\mathscr G_{\mathrm{orb}}^\bullet\bigr)
 =
 \bigoplus_{\substack{
  (c,\sigma)\in\widetilde\Phi^{\mathrm{cpt}}\\
  \dim\sigma-\dim c=q}}
 \bigl(\mathscr G_{\mathrm{orb}}^\bullet\bigr)_{(c,\sigma)}.
\]
Here 
\[
 \widetilde\Phi^{\mathrm{cpt}}
 :=
 S^{\mathrm{cpt}}_{T,D}
 \subseteq \widetilde\Phi
\]
is the subposet formed by compact cells.

The displayed local complexes and incidence maps depend only on
\(\eta=s(c,\sigma)=(c,\check c)\), and hence descend to the filtered
cellular diagram \(\mathscr R_{\Phi,\mathrm{orb}}^\bullet\) on
\(\Phi\) given by
\[
 \bigl(\mathscr R_{\Phi,\mathrm{orb}}^\bullet\bigr)_\eta
 :=
 \bigoplus_{p\in\Boxop(c)}
 e_p\,\DRlog(c,\check c;g).
\]
Consequently, the
preceding local identifications assemble to a strict filtered
isomorphism
\begin{equation}\label{eq:geometric-residue-pullback}
 \mathscr G_{\mathrm{orb}}^\bullet
 \cong
 s^*\mathscr R_{\Phi,\mathrm{orb}}^\bullet,
\end{equation}
the chain-level orbifold counterpart of the pullback relation used in
the proof of \cite[Proposition~9.15]{HarderLee}.

\subsection{The Box--Koszul comparison}

\begin{proposition}\label{prop:koszul-box}
For \(\eta=(c,\check c)\), write
\[
 \check g_c
 =\sum_{i=1}^d\check\xi_ix^{b_i},
 \qquad \check\xi_i\ne0.
\]
Then the two local factors are computed separately by
\begin{align}
 H_i\!\left(
 \cK_c^{\mathrm{eff}}(\check g)
 \right)
 &\cong
 \begin{cases}
  \mathbf B_c^{\mathrm{alg}},&i=0,\\
  0,&i\ne0,
 \end{cases}
 \label{eq:box-effective-koszul-homology}\\
 \mathbf B_c^{\mathrm{alg}}
 &:=
 \C[c]/(x^{b_1},\ldots,x^{b_d})
 \cong
 \bigoplus_{p\in\Boxop(c)}\C x^p,
 \label{eq:box-algebraic-quotient}\\
 \cD_{\Phi,g}(\eta)
 &=\DRlog(c,\check c;g).
 \label{eq:box-residual-factor}
\end{align}
Here \(H_i\) denotes Koszul homology; 
Since \(\psi(p)=\operatorname{age}(p)\), the summand
\(\C x^p\) has the same filtration and internal cohomological shifts
as the sector label \(e_p\) in
\eqref{eq:orbifold-residue-degree-and-filtration}.

Consequently, the maps
\[
 e_p\omega\longmapsto x^p\otimes\omega
\]
assemble to a strict filtered quasi-isomorphism
\begin{equation}\label{eq:box-residue-koszul-comparison}
 \mathscr R_{\Phi,\mathrm{orb}}^\bullet
 \xrightarrow{\ \sim\ }
 \widehat{\mathscr C}^{\mathrm{dR},N}_\Phi
 (\check g,g).
\end{equation}
Thus the Koszul factor is replaced by its homological degree-zero
Koszul quotient,
while the full residual twisted logarithmic de Rham complex, including
its coefficients and differential, is retained.
\end{proposition}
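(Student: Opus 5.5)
We would prove the three local statements first and then assemble them into the global comparison. For \eqref{eq:box-effective-koszul-homology} and \eqref{eq:box-algebraic-quotient}: since \(c\) is simplicial with stacky rays \(b_1,\ldots,b_d\), the cone algebra \(\C[c]\) is a free module over the polynomial subalgebra \(\C[x^{b_1},\ldots,x^{b_d}]\), with basis \(\{x^p:p\in\Boxop(c)\}\); every \(m\in c\cap M\) is uniquely \(p+\sum k_ib_i\) with \(p\in\Boxop(c)\), \(k_i\in\Z_{\ge0}\). The vectors \(b_i\) form a basis of \(c_\Q\), and \(\overline N_c\otimes\C\cong c_\C^\vee\). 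Choosing the dual basis \(b_i^\vee\) of \(\overline N_c\), the differential \(\bar D_{\check g}=\sum_i\check\xi_i x^{b_i}\bar\iota_{b_i}\) becomes the standard Koszul complex of the sequence \(\check\xi_1x^{b_1},\ldots,\check\xi_dx^{b_d}\) on \(\C[c]\). (Only the rays of \(c\) occur because \(\check g\) is ray-supported and \(\check g_c\) is its truncation to \(c\).) This sequence is regular on the free module, since it is regular on the polynomial ring and the \(\check\xi_i\) are nonzero. Hence the homology is concentrated in degree \(0\) and equals \(\C[c]/(x^{b_1},\ldots,x^{b_d})\), with basis \(x^p\), \(p\in\Boxop(c)\). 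The identity \(\psi(p)=\sum q_i=\operatorname{age}(p)\) is immediate from \(\psi(b_i)=1\) and linearity of \(\psi\) on \(c\). Equation \eqref{eq:box-residual-factor} is a tautology: for the trivial subdivision, \eqref{eq:residual-complex} is literally \(\DRlog(c,\check c;g)\).

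For the global map, we would define it stalkwise via the split factorization \eqref{eq:local-split-factorization}, but write it canonically as \(e_py^n\otimes\omega\mapsto x^py^n\otimes\omega\), with \(\omega\in\bigwedge c^\perp\subseteq\Lambda_N\). This map involves no splitting. Three checks are needed:
\begin{enumerate}
\item \emph{Chain map.} We have \(D_{\check g}(x^p\otimes\omega)=0\), because contraction by \(u\in c\) kills \(\bigwedge c^\perp\). The terms \(D_g\) and \(d_{\log}\) commute with multiplication by \(x^p\).
\item \emph{Degrees.} The map preserves the \((a,r)\)-bidegree and \(\deg_N\) by \eqref{eq:orbifold-residue-degree-and-filtration}.
\item \emph{Face maps.} It is compatible with the incidence maps. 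For \(\eta'=(d,\check d)\preceq\eta\), the face map on \(\widehat{\mathscr C}\) sends \(x^p\) to \(x^p\) if \(p\in d\) and to \(0\) otherwise. Since each Box element lies in \(\Boxop(d)\) exactly when its minimal cone is a face of \(d\), this matches the sector restriction. It remains to check that the Harder--Lee incidence maps \(\alpha\) of \cite[equations~(27)--(28)]{HarderLee}, transported through \eqref{eq:geometric-residue-pullback}, are exactly restriction of \(\C[\check c]\to\C[\check d]\) together with \(\bigwedge c^\perp\hookrightarrow\bigwedge d^\perp\), with the same signs. This identification is where I expect the main obstacle. I would handle it by matching the residue conventions directly, possibly rescaling the basis of \(e_p\) by orientation signs to absorb sign discrepancies.
\end{enumerate}

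Quasi-isomorphism and strictness we would check stalkwise on each filtered piece. Fix a splitting as in \cref{prop:local-factorization}(iii), so the target stalk is \(\cK_c^{\mathrm{eff}}(\check g)\otimes\DRlog(c,\check c;g)\), and our map is the inclusion of \(H_0\)-representatives tensored with the identity. The \(\C^\times\)-weights refine everything: the effective Koszul factor is bigraded by \((a,r)\), and \(\bar D_{\check g}\) has degree \((0,1)\). Hence its homology computation \eqref{eq:box-effective-koszul-homology} holds in each bidegree. The inclusion \(\bigoplus_p\C x^p\hookrightarrow\cK_c^{\mathrm{eff}}(\check g)\), with image in homological degree \(0\), is therefore a bigraded quasi-isomorphism. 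Over \(\C\), tensoring with \(\DRlog\) preserves this. Since the filtration \(F\) is split by the bigrading, the same holds on each \(F^\lambda\) and on each \(\Gr_F^\lambda\). This gives a strict filtered quasi-isomorphism. Because face maps are compatible, the stalkwise statements assemble to \eqref{eq:box-residue-koszul-comparison}.
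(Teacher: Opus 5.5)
Your proposal is correct and is essentially the paper's proof: the decomposition \(\C[c]=\bigoplus_{p\in\Boxop(c)}x^p\,\C[x^{b_1},\ldots,x^{b_d}]\) makes \(\check\xi_1x^{b_1},\ldots,\check\xi_dx^{b_d}\) a regular sequence with Box quotient, and \(\psi(p)=\operatorname{age}(p)\) gives the degree matching. The local split factorization then exhibits the map as the Box quasi-isomorphism tensored with the identity on \(\DRlog(c,\check c;g)\); it is canonical because \(c\) annihilates \(c^\perp\), and it is compatible with faces because restriction retains exactly the same Box sectors.

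The differences are cosmetic. You check strictness by a bigraded K\"unneth argument on each \(F^\lambda\), where the paper invokes a homogeneous Koszul contraction. The Harder--Lee incidence-map matching you single out as the main obstacle is not re-examined in the paper: it is treated as part of the construction of \(\mathscr R_{\Phi,\mathrm{orb}}^\bullet\) through \eqref{eq:geometric-residue-pullback}, so no rescaling of the \(e_p\) is needed.
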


\begin{proof}
Since \(c\) is simplicial, every \(m\in c\cap M\) has a unique
expression
\[
 m=p+\sum_i n_ib_i,
 \qquad
 p\in\Boxop(c),\quad n_i\in\Z_{\geq0}.
\]
Hence, for
\(S_c=\C[x^{b_1},\ldots,x^{b_d}]\),
\[
 \C[c]=
 \bigoplus_{p\in\Boxop(c)}x^pS_c.
\]
Thus \(\C[c]\) is free over the polynomial algebra \(S_c\), and
\(\check\xi_1x^{b_1},\ldots,\check\xi_dx^{b_d}\) is a regular
sequence.  This proves
\eqref{eq:box-effective-koszul-homology} and
\eqref{eq:box-algebraic-quotient}.

The standard homogeneous contraction of the polynomial Koszul
complex respects the filtration.  Since
\(\psi(p)=\operatorname{age}(p)\), 
the map
\(e_p\omega\mapsto x^p\otimes\omega\)
preserves both \(a\) and \(\deg_N\).

By \eqref{eq:residual-complex} and
\eqref{eq:local-split-factorization}, this map is locally the Box
quasi-isomorphism tensored with the identity on
\(\DRlog(c,\check c;g)\).  It is canonical because every
\(b_i\in c\) annihilates \(c^\perp\), and it commutes with the face
maps, which retain precisely the same Box sectors.  The local maps
therefore assemble to the strict filtered quasi-isomorphism
\eqref{eq:box-residue-koszul-comparison}.
\end{proof}

Give the associated graded hypercohomology its irregular Hodge
bigrading by
\begin{equation}\label{eq:simplicial-associated-graded-bigrading}
 \bH^{\lambda,\mu}\!\left(
 \Phi,\Gr_F \widehat{\mathscr C}_{\Phi,\mathrm{orb}}
 \right)
 :=
 \bH^{\lambda+\mu}\!\left(
 \Phi,\Gr_F^\lambda\widehat{\mathscr C}_{\Phi,\mathrm{orb}}
 \right).
\end{equation}
If \(s\) denotes the cellular cochain degree, then a homogeneous
class of local \((a,r)\)-degree has irregular Hodge bidegree
\begin{equation}\label{eq:irregular-hodge-grading-convention}
 (\lambda,\mu)=(a,r+s),
 \qquad
 \lambda+\mu=a + r+s = \deg_N+s.
\end{equation}

\begin{theorem}\label{thm:simplicial-comparison}
For the simplicial ray-supported data
\eqref{eq:hl-ray-supported-potential-data}, put
\[
 \widehat{\mathscr C}_{\Phi,\mathrm{orb}}
 :=
 \widehat{\mathscr C}^{\mathrm{dR},N}_\Phi
 (\check g,g),
 \qquad
 \mathscr C_\Phi(\check g,g)
 :=
 \Gr_F\widehat{\mathscr C}_{\Phi,\mathrm{orb}}.
\]
There are strict filtered quasi-isomorphisms
\begin{equation}\label{eq:simplicial-filtered-comparison}
 Rs_*\mathscr G_{\mathrm{orb}}^\bullet
 \simeq
 \widehat{\mathscr C}_{\Phi,\mathrm{orb}},
 \qquad
 R\Gamma(X_0,\Lambda_{\mathrm{orb}}^\bullet)
 \simeq
 R\Gamma(\Phi,\widehat{\mathscr C}_{\Phi,\mathrm{orb}}),
\end{equation}
and
\begin{equation}\label{eq:simplicial-irregular-hodge-polynomial}
 \Forb(U_{\bSigma},g;x,y)
 =
 \operatorname{Hilb}_{x,y}
 \bH^{\bullet,\bullet}\!\left(
 \Phi,\mathscr C_\Phi(\check g,g)
 \right).
\end{equation}
The chain-level comparison retains all coefficients of \(g\).  The
resulting numerical identity extends from this ray-supported potential
to all potentials that are nondegenerate at infinity and have
the same Newton polytope by coefficient invariance.
\end{theorem}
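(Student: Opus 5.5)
The argument chains the Harder--Lee residue route with the Box--Koszul comparison. I would do it in three steps.

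\emph{First step: the sheaf-level comparison.} By \eqref{eq:geometric-residue-pullback}, \(\mathscr G_{\mathrm{orb}}^\bullet\cong s^*\mathscr R_{\Phi,\mathrm{orb}}^\bullet\), so the projection formula gives \(Rs_*\mathscr G_{\mathrm{orb}}^\bullet\simeq \mathscr R_{\Phi,\mathrm{orb}}^\bullet\otimes Rs_*\underline{\C}\). The needed input is that for every \(\eta\in\Phi\) the fibre poset \(s^{-1}\langle\eta\rangle\) is acyclic, i.e.\ the refined cells over \(\eta\) form a contractible cell complex. Concretely, these are the cells \((c',\sigma)\) with \(c'\preceq c\) and \(\sigma\) running over the subdivision of the face of the tropical complex dual to \(\check c\), which is a polytopal subdivision of a polytope.

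This is the step I expect to be the main obstacle. One has to check that the Harder--Lee cellular cochain convention, with compact cells and degree \(\dim\sigma-\dim c\), matches sheaf cohomology on the Alexandrov space \(\Phi\). Equivalently, one needs the compactly supported cellular cochains of the refinement over each open star to compute the stalk, with no boundary contribution from noncompact cells. I would handle this exactly as in \cite[Proposition~9.15]{HarderLee}: there the same pullback relation is used non-orbifoldly, and the orbifold sector labels \(e_p\) are constant along fibres of \(s\), so that argument applies sectorwise. The outcome is \(Rs_*\mathscr G_{\mathrm{orb}}^\bullet\simeq\mathscr R_{\Phi,\mathrm{orb}}^\bullet\) strictly filtered, since \(F\) is a direct-sum splitting by monomial degrees and the argument runs on each \(F^\lambda\).

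\emph{Second step: the two filtered quasi-isomorphisms.} Composing this with \eqref{eq:box-residue-koszul-comparison} from \cref{prop:koszul-box} gives the first quasi-isomorphism in \eqref{eq:simplicial-filtered-comparison}. Taking \(R\Gamma(\Phi,-)\) gives
\[
 R\Gamma\bigl(\Phi,\widehat{\mathscr C}_{\Phi,\mathrm{orb}}\bigr)\simeq R\Gamma\bigl(\widetilde\Phi,\mathscr G_{\mathrm{orb}}^\bullet\bigr)=\operatorname{Tot}C^\bullet_{\mathrm{cell}}\bigl(\widetilde\Phi,\mathscr G_{\mathrm{orb}}^\bullet\bigr).
\]
By \eqref{eq:hl-residue-route} the right-hand side is \(R\Gamma(X_0,\Lambda_{\mathrm{orb}}^\bullet)\), which is the second quasi-isomorphism. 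Strictness is preserved at each stage because every map respects the splitting of the filtration by \((a,r)\)-degree.

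\emph{Third step: the numerical identity.} For \eqref{eq:simplicial-irregular-hodge-polynomial}, I would use that the filtration on the complex is split. Hence \(\Gr_F^\lambda\) commutes with \(R\Gamma\), and the irregular Hodge filtration on \(\bH^n(X_0,\Lambda^\bullet_{\mathrm{orb}})\) is strict, by the degeneration results cited before \eqref{eq:hl-irregular-hodge-central-fibre}. This gives
\[
 \dim\Gr^\lambda_{F_{\mathrm{irr}}}\bH^n=\dim\bH^n\bigl(\Phi,\Gr^\lambda_F\widehat{\mathscr C}_{\Phi,\mathrm{orb}}\bigr).
\]
With the bigrading \eqref{eq:simplicial-associated-graded-bigrading}, \(n=\lambda+\mu\), and \eqref{eq:hl-irregular-hodge-central-fibre} identifies this with \(f_{\mathrm{orb}}^{\lambda,\mu}(U_{\bSigma},g)\). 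The age shifts agree because of \eqref{eq:orbifold-residue-degree-and-filtration} and \(\psi(p)=\operatorname{age}(p)\).

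Finally, the extension to arbitrary nondegenerate potentials with the same Newton polytope at infinity is a direct citation of \cite[Corollary~6.6]{WangCoefficient}.
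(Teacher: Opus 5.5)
Your proposal follows the paper's proof: you compose \eqref{eq:geometric-residue-pullback} with the Box--Koszul comparison, apply the subdivision argument of \cite[Proposition~9.15]{HarderLee} sectorwise on each filtered piece, and use the residue route \eqref{eq:hl-residue-route}; you then transport the degeneration of \cite[Theorem~9.21(1)]{HarderLee} through the filtered comparison to commute $\Gr_F$ with hypercohomology, and finish with coefficient invariance. Two of your stated mechanisms are not right as written, though neither breaks the proof: the projection formula $Rs_*s^*\mathscr R\simeq\mathscr R\otimes Rs_*\underline{\C}$ fails for non-locally-constant sheaves on a finite poset (what is actually needed is acyclicity of every preimage $s^{-1}[\eta',\eta]$, not just of $s^{-1}\langle\eta\rangle$), and termwise splitting of $F$ alone does not commute $\Gr_F$ with cohomology because $d_{\log}$ raises $a$; in both places you fall back on exactly the Harder--Lee inputs the paper uses, so the argument goes through.
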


\begin{proof}
Combining \eqref{eq:geometric-residue-pullback} with
\eqref{eq:box-residue-koszul-comparison} gives
\[
 \mathscr G_{\mathrm{orb}}^\bullet
 \simeq
 s^*\widehat{\mathscr C}_{\Phi,\mathrm{orb}}.
\]
Over every basic face neighbourhood, \(s\) is a cellular
subdivision.  Applying the subdivision argument from the proof of
\cite[Proposition~9.15]{HarderLee} to each filtered piece therefore
gives
\[
 \widehat{\mathscr C}_{\Phi,\mathrm{orb}}
 \simeq
 Rs_*s^*\widehat{\mathscr C}_{\Phi,\mathrm{orb}},
\]
and hence the first comparison in
\eqref{eq:simplicial-filtered-comparison}.  Together with the residue
resolution \eqref{eq:hl-residue-route}, this also gives
\[
 R\Gamma(X_0,\Lambda_{\mathrm{orb}}^\bullet)
 \simeq
 R\Gamma(\widetilde\Phi,\mathscr G_{\mathrm{orb}}^\bullet)
 \simeq
 R\Gamma(\Phi,\widehat{\mathscr C}_{\Phi,\mathrm{orb}}).
\]
The degree computation in Proposition~\ref{prop:koszul-box} shows
that these comparisons respect \(a\), \(\deg_N\), 
and hence the irregular Hodge bigrading \eqref{eq:simplicial-associated-graded-bigrading}.

It remains only to commute the associated graded with
hypercohomology.  By
\cite[Theorem~9.21\textup{(1)}]{HarderLee},
\[
 \bH^n(X_0,F_{\mathrm{irr}}^\lambda\Lambda^\bullet)
 \longrightarrow
 \bH^n(X_0,\Lambda^\bullet)
\]
is injective.  The same holds componentwise on the relative inertia
stack by \cite[Remark~9.23]{HarderLee}, and hence for
\(\Lambda_{\mathrm{orb}}^\bullet\).  Transporting these injections
through \eqref{eq:simplicial-filtered-comparison}, the standard exact
sequence for two consecutive filtered pieces gives
\[
 \Gr_F^\lambda
 \bH^n(\Phi,\widehat{\mathscr C}_{\Phi,\mathrm{orb}})
 \cong
 \bH^n\!\left(
  \Phi,\Gr_F^\lambda
  \widehat{\mathscr C}_{\Phi,\mathrm{orb}}
 \right).
\]
Combining this with
\eqref{eq:hl-irregular-hodge-central-fibre} and summing the
dimensions with weights \(x^\lambda y^\mu\) proves
\eqref{eq:simplicial-irregular-hodge-polynomial} for the
ray-supported potential.  Coefficient invariance
\cite[Corollary~6.6]{WangCoefficient} gives the stated extension.
\end{proof}

 \section{Pure sheaves on fans and weighted Ehrhart theory}
\label{sec:pure-sheaves-weighted-ehrhart}

\subsection{Sheaves on fans and conewise polynomials}
\label{subsec:sheaves-on-fans}

The sheaf of conewise polynomial rings is
\begin{equation*}\mathcal A_{\Psi,\eta}
 :=
 \operatorname{Sym}(\eta_\C^*),
\end{equation*}
with the evident face restrictions.  We place linear forms in degree
one; this is half the cohomological grading used in \cite{BBFK}.
For the orthogonality fan,
\begin{equation*}\mathcal A_{\Phi,(c,\check c)}
 =
 \mathcal A_{\Sigma,c}\otimes
 \mathcal A_{\check\Sigma,\check c},
 \qquad
 \deg c_\C^*=(1,0),\quad
 \deg\check c_\C^*=(0,1).
\end{equation*}
If \(G\) is a finitely generated graded
\(\mathcal A_{\Psi,\eta}\)-module, write
\begin{equation*}\overline G:=G/\mathfrak m_\eta G,
\end{equation*}
where \(\mathfrak m_\eta\) is the ideal of positive-degree
polynomials.  When \(G\) is free, \(\overline G\) is its space of
minimal homogeneous generators.

\begin{definition}[Pure sheaves]
\label{def:pure-sheaves}
A graded \(\mathcal A_\Psi\)-module sheaf \(\mathcal F\) is
\emph{pure} if every stalk is finitely generated and free and every
boundary map
\[
 \mathcal F_\eta\longrightarrow
 \Gamma(\partial\eta,\mathcal F)
\]
is surjective.  The latter condition is the form of flabbiness for
finite fans \cite[Definition~2.1]{BBFK}.
\end{definition}

Pure sheaves provide a combinatorial analogue of the geometric
decomposition theorem for pure perverse sheaves.
Normalized simple pure sheaves play the
role of intersection-cohomology complexes.  Every pure sheaf splits
into these simple pieces with graded multiplicities; the splitting is
not canonical, but the multiplicity spaces are.

\begin{theorem}\label{thm:pure-sheaf-decomposition}
\begin{enumerate}[label=\textup{(\roman*)}]
\item
For every \(\vartheta\in\Psi\), there is, up to isomorphism, a unique
normalized simple pure sheaf \(\mathcal L_\Psi^\vartheta\),
characterized by \cite[Remark~2.2b]{BBFK}
\[
 \operatorname{supp}\mathcal L_\Psi^\vartheta
 =\operatorname{Star}_\Psi(\vartheta),
 \qquad
 (\mathcal L_\Psi^\vartheta)_\vartheta
 =\mathcal A_{\Psi,\vartheta},
\]
and by requiring, for every \(\eta\succ\vartheta\) in its support,
that the boundary map
\[
 (\mathcal L_\Psi^\vartheta)_\eta
 \longrightarrow
 \Gamma(\partial\eta,\mathcal L_\Psi^\vartheta)
\]
be a minimal free cover.  
If \(\Psi\) is simplicial, \(\mathcal L_\Psi^\vartheta\) is
the conewise polynomial sheaf on
\(\operatorname{Star}_\Psi(\vartheta)\), extended by zero
\cite[Proposition~1.4 and Remark~2.2b(ii)]{BBFK}.

\item
Every pure sheaf \(\mathcal F\) admits a noncanonical decomposition
\cite[Theorem~2.3]{BBFK}:
\begin{equation}\label{eq:pure-sheaf-decomposition}
 \mathcal F
 \simeq
 \bigoplus_{\vartheta\in\Psi}
 \mathcal W_\vartheta(\mathcal F)
 \otimes_\C\mathcal L_\Psi^\vartheta.
\end{equation}
The graded multiplicity spaces are canonically determined by
\begin{equation}\label{eq:pure-multiplicity-kernel}
 \mathcal W_\vartheta(\mathcal F)
 =
 \ker\!\left(
  \overline{\mathcal F_\vartheta}
  \longrightarrow
  \overline{\Gamma(\partial\vartheta,\mathcal F)}
 \right).
\end{equation}
The grading shifts from \textup{(i)} are recorded by these graded
multiplicity spaces.
\end{enumerate}
\end{theorem}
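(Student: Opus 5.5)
We would prove both parts by induction over the cones of \(\Psi\) ordered by dimension, using only graded Nakayama and the lifting property of free graded modules. We regard \(\Gamma(\partial\eta,\mathcal F)\) as a graded \(\mathcal A_{\Psi,\eta}\)-module through the face restrictions \(\mathcal A_{\Psi,\eta}\to\mathcal A_{\Psi,\gamma}\). It is finitely generated whenever all the stalks are.

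For \textup{(i)}, we construct \(\mathcal L_\Psi^\vartheta\) on \(\operatorname{Star}_\Psi(\vartheta)\) one cone at a time.
\begin{itemize}
\item Put \(\mathcal A_{\Psi,\vartheta}\) at \(\vartheta\) and zero off the star.
\item Once the sheaf is defined on \(\partial\eta\) for some \(\eta\succ\vartheta\), let its stalk at \(\eta\) be a minimal free cover of \(\Gamma(\partial\eta,\mathcal L)\), with the covering map as boundary map.
\end{itemize}
This sheaf is pure by construction.

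For uniqueness, take two such sheaves and an isomorphism at \(\vartheta\). At each later \(\eta\), the isomorphism on \(\partial\eta\) induces an isomorphism of the sections over \(\partial\eta\). It lifts to the free stalks because they are free. The lift is surjective modulo \(\mathfrak m_\eta\), since a minimal cover induces an isomorphism \(\overline{\mathcal L_\eta}\cong\overline{\Gamma(\partial\eta,\mathcal L)}\). Hence it is an isomorphism of free modules of equal rank.

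In the simplicial case, the conewise polynomial sheaf on the star is flabby. Its stalk maps onto the boundary sections with kernel generated by the product of the linear forms cutting out the facets not containing \(\vartheta\). One checks this is a minimal cover, so uniqueness identifies it with \(\mathcal L_\Psi^\vartheta\).

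For \textup{(ii)}, first define \(\mathcal W_\vartheta(\mathcal F)\) by \eqref{eq:pure-multiplicity-kernel}. For each \(\vartheta\), choose a homogeneous lift \(\mathcal W_\vartheta\to\mathcal F_\vartheta\). Since sections of \(\partial\vartheta\) pull back to the star, this gives a morphism
\[
 \mathcal W_\vartheta\otimes\mathcal L_\Psi^\vartheta\to\mathcal F
\]
over \(\langle\vartheta\rangle\), sending the boundary to zero modulo \(\mathfrak m\). Choose the lift so that it maps exactly to zero on the boundary. This is possible: by flabbiness, \(\mathcal F_\vartheta\to\Gamma(\partial\vartheta,\mathcal F)\) is onto, and an element of the kernel modulo \(\mathfrak m\) can be corrected by an element of \(\mathfrak m\mathcal F_\vartheta\) to lie in the actual kernel. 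We then extend the morphism over \(\operatorname{Star}_\Psi(\vartheta)\) inductively. At \(\eta\succ\vartheta\), the composite
\[
 \mathcal L_\eta\to\Gamma(\partial\eta,\mathcal L)\to\Gamma(\partial\eta,\mathcal F)
\]
lifts through the surjection \(\mathcal F_\eta\to\Gamma(\partial\eta,\mathcal F)\) because \(\mathcal L_\eta\) is free.

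Summing over \(\vartheta\) gives a morphism \(\Theta:\bigoplus_\vartheta\mathcal W_\vartheta\otimes\mathcal L^\vartheta\to\mathcal F\). We show it is an isomorphism stalkwise by induction on \(\eta\). Assume it is an isomorphism on \(\partial\eta\), hence on \(\Gamma(\partial\eta,-)\). Reduce modulo \(\mathfrak m_\eta\).
\begin{itemize}
\item On the source, the summands with \(\vartheta\ne\eta\) contribute exactly \(\overline{\Gamma(\partial\eta,-)}\), by minimality of the covers.
\item The summand with \(\vartheta=\eta\) contributes \(\mathcal W_\eta\).
\end{itemize}
On the target, we have the exact sequence
\[
 0\to\mathcal W_\eta\to\overline{\mathcal F_\eta}\to\overline{\Gamma(\partial\eta,\mathcal F)}\to0,
\]
where the right map is onto by flabbiness. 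Thus \(\overline\Theta_\eta\) is an isomorphism. Graded Nakayama then makes \(\Theta_\eta\) surjective, and equality of ranks of the free modules makes it an isomorphism.

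Canonicity of the multiplicity spaces follows by evaluating the kernel formula on \(\mathcal L^{\vartheta'}\). For \(\vartheta'=\vartheta\) the boundary vanishes, so the kernel is \(\C\). For \(\vartheta'\prec\vartheta\), minimality makes the reduced boundary map injective, so the kernel is zero. Since the formula is additive, the multiplicity of \(\mathcal L^\vartheta\) in any decomposition equals \(\mathcal W_\vartheta(\mathcal F)\).

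We expect the main obstacle to be the reduction-modulo-\(\mathfrak m\) step. It requires the identification \(\overline{\mathcal L_\eta}\cong\overline{\Gamma(\partial\eta,\mathcal L)}\) for minimal covers, and the right-exactness of reduction applied to \(\mathcal F_\eta\to\Gamma(\partial\eta,\mathcal F)\) to get the displayed sequence. We would verify both directly from graded Nakayama before running the induction.
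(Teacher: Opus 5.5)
Your argument is correct. It is the standard proof from \cite{BBFK}: build the simple sheaf by minimal free covers, lift along the flabby boundary surjections, and conclude with graded Nakayama and additivity of the kernel formula. The paper does not reprove any of this and simply cites that reference.

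There is one slip in the simplicial case. The boundary sections of \(\mathcal L_\Psi^\vartheta\) live on the faces \(\vartheta\preceq\gamma\prec\eta\). So the kernel of \(\mathcal A_{\Psi,\eta}\to\Gamma(\partial\eta,\mathcal L_\Psi^\vartheta)\) is generated by the product of the linear forms defining the facets of \(\eta\) that \emph{contain} \(\vartheta\), not those that do not contain it. There are \(\dim\eta-\dim\vartheta\geq1\) such facets, and this is exactly what puts the kernel inside \(\mathfrak m_\eta\). With your description, \(\vartheta=0\) would give an empty product, and the cover would fail to be minimal.

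You should also add one line to justify \(\operatorname{supp}\mathcal L_\Psi^\vartheta=\operatorname{Star}_\Psi(\vartheta)\). Namely, \(\Gamma(\partial\eta,\mathcal L)\neq0\) for \(\eta\succ\vartheta\), because flabbiness on the subfan \(\partial\eta\) makes \(\Gamma(\partial\eta,\mathcal L)\to\mathcal L_\vartheta=\mathcal A_{\Psi,\vartheta}\) surjective.
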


\begin{lemma}\label{lem:orthogonal-external-product-simple}
Let \(i:\Phi\hookrightarrow\Sigma\times\check\Sigma\) be the
inclusion.  For \(\sigma\in\Sigma\) and
\(\check\sigma\in\check\Sigma\),
\begin{equation}\label{eq:orthogonal-external-product-simple}
 i^*\bigl(
  \mathcal L_\Sigma^\sigma\boxtimes
  \mathcal L_{\check\Sigma}^{\check\sigma}
 \bigr)
 \cong
 \begin{cases}
  \mathcal L_\Phi^{(\sigma,\check\sigma)},
  &\sigma\perp\check\sigma,\\
  0,&\sigma\not\perp\check\sigma.
 \end{cases}
\end{equation}
\end{lemma}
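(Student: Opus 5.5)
I will verify the characterization of \cref{thm:pure-sheaf-decomposition}\textup{(i)} directly for \(\mathcal F:=i^*(\mathcal L_\Sigma^\sigma\boxtimes\mathcal L_{\check\Sigma}^{\check\sigma})\). Its stalk at \(\eta=(c,\check c)\in\Phi\) is
\[
\mathcal F_\eta=(\mathcal L_\Sigma^\sigma)_c\otimes_\C(\mathcal L_{\check\Sigma}^{\check\sigma})_{\check c}.
\]
This stalk is a module over \(\mathcal A_{\Sigma,c}\otimes\mathcal A_{\check\Sigma,\check c}=\mathcal A_{\Phi,\eta}\), and it is free because it is a tensor product of free modules over the two factors. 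The support of \(\mathcal F\) is therefore \(\{(c,\check c)\in\Phi:\sigma\preceq c,\ \check\sigma\preceq\check c\}\).

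If \(\sigma\not\perp\check\sigma\), then no \((c,\check c)\) with \(c\succeq\sigma\) and \(\check c\succeq\check\sigma\) can satisfy \(c\perp\check c\), so \(\mathcal F=0\). If \(\sigma\perp\check\sigma\), then \(\vartheta:=(\sigma,\check\sigma)\in\Phi\), the support is exactly \(\operatorname{Star}_\Phi(\vartheta)\), and
\[
\mathcal F_\vartheta=\mathcal A_{\Sigma,\sigma}\otimes\mathcal A_{\check\Sigma,\check\sigma}=\mathcal A_{\Phi,\vartheta}.
\]
It remains to show, for every \(\eta\succ\vartheta\) in \(\Phi\), that \(\mathcal F_\eta\to\Gamma(\partial\eta,\mathcal F)\) is a minimal free cover. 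Uniqueness in \cref{thm:pure-sheaf-decomposition}\textup{(i)} then gives the isomorphism.

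\textbf{Key step: the boundary.} Since \(c\perp\check c\), every pair of faces of \(c\) and \(\check c\) is orthogonal. Hence \(\langle\eta\rangle\subseteq\Phi\) is the full product poset \(\langle c\rangle\times\langle\check c\rangle\), as already observed in the proof of \cref{thm:filtered-toric-face-direct-image}. On this product, \(\mathcal F\) restricts to the external product of \(A:=\mathcal L_\Sigma^\sigma|_{\langle c\rangle}\) and \(B:=\mathcal L_{\check\Sigma}^{\check\sigma}|_{\langle\check c\rangle}\). The boundary decomposes as
\[
\partial\eta=(\partial c\times\langle\check c\rangle)\cup(\langle c\rangle\times\partial\check c).
\]
A Mayer--Vietoris / Künneth argument on finite posets then identifies \(\Gamma(\partial\eta,\mathcal F)\) with the kernel of
\[
\Gamma(\partial c,A)\otimes B_{\check c}\;\oplus\;A_c\otimes\Gamma(\partial\check c,B)\longrightarrow\Gamma(\partial c,A)\otimes\Gamma(\partial\check c,B).
\]
Write \(\alpha:A_c\to\Gamma(\partial c,A)\) and \(\beta:B_{\check c}\to\Gamma(\partial\check c,B)\) for the boundary maps, with kernels \(K_A,K_B\) and images \(I_A,I_B\).

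\textbf{Surjectivity.} Both \(\alpha\) and \(\beta\) are surjective by purity, and the subsidiary spaces \(\Gamma(\partial c,A)\) and \(\Gamma(\partial\check c,B)\) are \(\C\)-vector spaces. Linear algebra then shows that the kernel above is the image of \(A_c\otimes B_{\check c}\): choose \(\C\)-linear sections of \(\alpha\) and \(\beta\), and decompose \(A_c=K_A\oplus I_A\) and \(B_{\check c}=K_B\oplus I_B\). This gives an exact sequence
\[
0\to K_A\otimes K_B\to A_c\otimes B_{\check c}\to\Gamma(\partial\eta,\mathcal F)\to0,
\]
so \(\mathcal F\) is pure. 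Note that for \(c=\sigma\) we have \(A_c=\mathcal A_{\Sigma,\sigma}\) and the boundary term vanishes, since the support of \(A\) misses \(\partial\sigma\); this case is handled by the same formula.

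\textbf{Minimality (the expected obstacle).} Minimality means \(\ker\subseteq\mathfrak m_\eta\mathcal F_\eta\), and after reducing modulo \(\mathfrak m_\eta\) it becomes
\[
\overline{K_A\otimes K_B}\to\overline{A_c}\otimes\overline{B_{\check c}}\quad\text{is zero}.
\]
Minimality of \(\alpha\) gives \(K_A\subseteq\mathfrak m_cA_c\) whenever \(c\ne\sigma\), and similarly \(K_B\subseteq\mathfrak m_{\check c}B_{\check c}\) whenever \(\check c\ne\check\sigma\). Since \(\eta\ne\vartheta\), at least one of these inclusions holds. Then \(K_A\otimes K_B\) lies in \(\mathfrak m_\eta(A_c\otimes B_{\check c})\), because \(\mathfrak m_c\otimes 1\) and \(1\otimes\mathfrak m_{\check c}\) are both contained in \(\mathfrak m_\eta\).

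The delicate point is making the Künneth identification of \(\Gamma(\partial\eta,-)\) rigorous and \(\mathcal A\)-linear; I expect this to be the main technical step. The tensor identification should be checked on sections over the two closed pieces of \(\partial\eta\) and on their intersection \(\partial c\times\partial\check c\), using that the restriction maps are \(\C\)-split.
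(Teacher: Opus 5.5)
Your proposal is correct and follows essentially the same route as the paper: you check support, initial stalk $\mathcal A_{\Phi,\vartheta}$, and the minimal-free-cover condition for the restricted external product, using that $\partial\eta$ is the union of the two factorwise boundaries, and then invoke uniqueness. Your computation of the kernel of $\mathcal F_\eta\to\Gamma(\partial\eta,\mathcal F)$ as $K_A\otimes K_B$ spells out the paper's one-line claim that the recursive minimal free cover commutes with external products; the K\"unneth step you flag is routine, because finite limits commute with $\otimes_\C$, and only $\C$-linearity is needed for both the surjectivity and the containment $K_A\otimes K_B\subseteq\mathfrak m_\eta\mathcal F_\eta$.
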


\begin{proof}
If \(\sigma\not\perp\check\sigma\), no cone of \(\Phi\) belongs to
the support of the restricted external product.  Otherwise, put
\(\vartheta=(\sigma,\check\sigma)\).  For every
\(\eta=(c,\check c)\succeq\vartheta\),
\[
 [\vartheta,\eta]
 \cong[\sigma,c]\times[\check\sigma,\check c].
\]
The boundary of this product interval is the union of its two
factorwise boundaries.  Consequently, the recursive minimal free
cover defining a normalized simple extension commutes with the
external product.  The restricted sheaf is therefore supported on
\(\operatorname{Star}_\Phi(\vartheta)\), has initial stalk
\(\mathcal A_{\Phi,\vartheta}\), and satisfies the defining
minimality condition.  Uniqueness gives the result.
\end{proof}

\subsection{The Stanley \texorpdfstring{\(g\)}{g}-kernel and weighted
Ehrhart polynomials}
\label{subsec:weighted-ehrhart-background}

Three families of polynomials will encode the preceding
decomposition: the Stanley \(g\)-polynomial records the local ranks of
the simple pure sheaves, the weighted \(h^*\)-polynomial records the
full semigroup contribution, and the weighted local
\(l^*\)-polynomial extracts its primitive multiplicities.

Following \cite[Section~2]{KatzStapledonLocal}, let
\(\mathcal P=[\hat0,\hat1]\) be an Eulerian poset of rank \(n\).
Its Stanley \(g\)-polynomial \(g(\mathcal P;T)\) is one for \(n=0\);
for \(n>0\), it is the unique polynomial of degree strictly less than
\(n/2\) satisfying
\begin{equation*}T^n g(\mathcal P;T^{-1})
 =
 \sum_{z\in\mathcal P}
 g([\hat0,z];T)
 (T-1)^{n-\operatorname{rk}(z)}.
\end{equation*}
For a cone face lattice, the interval \([c',c]\) has rank
\(\dim c-\dim c'\).  With the degree-one normalization fixed above,
the local intersection cohomology calculation reads
\begin{equation}\label{eq:simple-pure-local-g}
 \operatorname{Hilb}
 \overline{(\mathcal L_\Psi^\vartheta)_\eta}
 =
 g([\vartheta,\eta];T)
 \qquad(\vartheta\preceq\eta)
\end{equation}
\cite[Theorem~4.5]{BBFK}.  We shall also use
\begin{align}
 g(\mathcal P_1\times\mathcal P_2;T)
 &=
 g(\mathcal P_1;T)
 g(\mathcal P_2;T),
 \label{eq:stanley-g-product}\\
 \sum_{c'\preceq c''\preceq c}
 g([c',c''];T)
 (-1)^{\dim c-\dim c''}
 g([c'',c]^*;T)
 &=\delta_{c',c}.
 \label{eq:stanley-g-inversion}
\end{align}
The second identity is the standard \(g\)-kernel inversion
\cite[Theorem~3.11]{KatzStapledonLocal}.

The \(h^*\)- and \(l^*\)-polynomials are the Ehrhart-theoretic
counterparts of Stanley's theory.  The former is the numerator of a
weighted lattice-point series, whereas the latter is its local, or
primitive, part obtained by applying the dual Stanley \(g\)-kernel.
Thus \(l^*\) is more precisely analogous to a local \(h\)-polynomial
than to the \(g\)-polynomial itself; see
\cite{StapledonWeighted,KatzStapledonLocal}.

Let \(c\subset L_\R\) be a pointed rational cone of dimension \(d\),
equipped with the restriction \(\psi_c:=\psi|_c\) of a rational
height function \(\psi\); for every face \(c'\preceq c\), write
\(\psi_{c'}:=\psi|_{c'}\).  Choose \(R\in\Z_{>0}\) such that
\(\psi_c(L\cap\Span_\R c)\subseteq \frac{1}{R}\Z\).  Define
\begin{align}
 h^*_{\psi_c}(c;T)
 &:=(1-T)^d\sum_{m\in c\cap L}T^{\psi_c(m)},
 \label{eq:weighted-hstar-definition}\\
 l^*_{\psi_c}(c;T)
 &:=
 \sum_{c'\preceq c}
 h^*_{\psi_{c'}}(c';T)
 (-1)^{d-\dim c'}
 g([c',c]^*;T).
 \label{eq:weighted-lstar-definition}
\end{align}
For the zero cone, both polynomials are one.  
They belong to \(\Z[T^{1/R}]\),
see \cite[Proposition~2.6]{StapledonWeighted}.

If \(c\) is simplicial with stacky ray vectors
\(b_1,\ldots,b_d\) satisfying \(\psi_c(b_i)=1\), put
\begin{align*}
\Boxop^\circ(c)
 &:={}
 \left\{
  \sum_{i=1}^dq_ib_i\in L:0<q_i<1
 \right\}.
\end{align*}
Then the closed and open fundamental-parallelepiped formulas are
\begin{align*}
 h^*_{\psi_c}(c;T)
 &=
 \sum_{m\in\Boxop(c)}T^{\psi_c(m)},\\
 l^*_{\psi_c}(c;T)
 &=
 \sum_{m\in\Boxop^\circ(c)}T^{\psi_c(m)}.
\end{align*}

\section{Toric face double Koszul complexes and Hilbert series}
\label{sec:non-simplicial-associated-graded}

This section identifies the associated graded of the filtered
Koszul--de Rham complex with a toric face double Koszul complex.  Its
coefficient sheaf then decomposes into finite-dimensional primitive
multiplicity spaces and intrinsic support complexes.  Their respective
contributions to the final Hilbert series are the local weighted
\(h^*\)-polynomials and local intersection cohomology.

\subsection{The associated graded double Koszul complex}

Recall the convention \eqref{eq:associated-graded-convention} and set
\begin{equation*}\mathscr C_\Phi^{\Gamma,\check\Gamma}(\check g,g)
 :=
 \Gr_F
 \widehat{\mathscr C}^{\mathrm{dR},N;
 \Gamma,\check\Gamma}_\Phi(\check g,g).
\end{equation*}
The associated graded complex inherits the \((a,r)\)-bigrading of
Definition~\ref{def:toric-face-koszul-de-rham-pair}.  We give its
derived global sections the irregular Hodge bigrading fixed in
\eqref{eq:irregular-hodge-grading-convention}.
For the trivial subdivisions, we write
\(\mathscr C_\Phi(\check g,g)\).

We first describe its coefficient and universal Koszul factors.
Using the conewise polynomial sheaf of
\cref{subsec:sheaves-on-fans}, give
\(\mathscr S_\Sigma^\Gamma\) the following graded
\(\mathcal A_\Sigma\)-algebra structure
\begin{equation}\label{eq:left-bm-kappa-map}
 \kappa_{\check g,c}^\Gamma:
 \mathcal A_{\Sigma,c}
 =\operatorname{Sym}(c_\C^*)
 \longrightarrow
 \C[c]^\Gamma,
 \qquad
 \ell\longmapsto
 \partial_\ell^\Gamma\check g_c
 =
 \sum_{u\in c\cap M}
 \check\xi_u\ell(u)x^u,
 \quad
 \ell\in c_\C^*.
\end{equation}

Write the resulting module sheaf as
\(\mathcal B_{\Sigma,\check g}^\Gamma\), and define
\(\mathcal B_{\check\Sigma,g}^{\check\Gamma}\) analogously.  Put
\begin{equation*}\mathcal B_\Phi^{\Gamma,\check\Gamma}(\check g,g)
 :=
 \left.
 \bigl(
  \mathcal B_{\Sigma,\check g}^\Gamma
  \boxtimes
  \mathcal B_{\check\Sigma,g}^{\check\Gamma}
 \bigr)\right|_\Phi.
\end{equation*}
At \(\eta=(c,\check c)\), its stalk is
\begin{equation*}\mathcal B_{\Phi,\eta}^{\Gamma,\check\Gamma}(\check g,g)
 =
 \C[c]^\Gamma\otimes\C[\check c]^{\check\Gamma}.
\end{equation*}

Let
\(\mathbf m_c\in c_\C^*\otimes c_\C\) and
\(\mathbf n_{\check c}\in
\check c_\C^*\otimes\check c_\C\)
be the identity tensors.  The universal contraction--wedge
double Koszul complex is
\begin{equation}\label{eq:universal-orthogonal-clifford-complex}
 \operatorname{Cl}_{\Phi,\eta}
 :=
 \left(
  \mathcal A_{\Phi,\eta}\otimes\bigwedge^\bullet N_\C,
  \iota_{\mathbf m_c}+\mathbf n_{\check c}\wedge
 \right).
\end{equation}
Its differential squares to the universal pairing between
\(c_\C\) and \(\check c_\C\), hence to zero because
\((c,\check c)\in\Phi\).  These stalks form a complex
\(\operatorname{Cl}_\Phi\) of \(\mathcal A_\Phi\)-module sheaves.
For a homogeneous element of
\(\mathcal A_{\Phi,\eta}\otimes\bigwedge^kN_\C\),
the polynomial bidegree
\((i,j)\) plays the role of the two height degrees
\((\psi(m),\check\psi(n))\).  Hence the induced \((a,r)\)-bigrading
is
\[
 a=k+i-j,\qquad r=i+j.
\]

\begin{proposition}\label{prop:graded-total-local-form}
For \(\eta=(c,\check c)\in\Phi\), there is a canonical
identification
\begin{equation}\label{eq:graded-local-unsplit-complex}
 \mathscr C_{\Phi,\eta}^{\Gamma,\check\Gamma}(\check g,g)
 \cong
 \left(
  \C[c]^\Gamma\otimes\C[\check c]^{\check\Gamma}
  \otimes\bigwedge^\bullet N_\C,
  D_{\check g}+D_g
 \right).
\end{equation}
The identification is homogeneous for the \((a,r)\)-bigrading of
Definition~\ref{def:toric-face-koszul-de-rham-pair}; its differential
has bidegree \((0,1)\).
Moreover, the local identifications assemble to
\begin{equation}\label{eq:presented-bm-module-clifford-form}
 \mathscr C_\Phi^{\Gamma,\check\Gamma}(\check g,g)
 \cong
 \mathcal B_\Phi^{\Gamma,\check\Gamma}(\check g,g)
 \otimes_{\mathcal A_\Phi}\operatorname{Cl}_\Phi.
\end{equation}
\end{proposition}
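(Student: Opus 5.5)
The plan is to compute $\Gr_F$ directly on the explicit monomial basis, and then check that the resulting differential is precisely the one obtained by base change of $\operatorname{Cl}_\Phi$ along the $\kappa$-maps.

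\textbf{Step 1: the local form.} By Definition~\ref{def:toric-face-koszul-de-rham-pair}, the stalk $\widehat{\mathscr C}_\eta$ has the basis of homogeneous tensors $z=x^my^n\otimes\omega$, with $m\in c\cap M$, $n\in\check c\cap N$ and $\omega$ running over a basis of $\bigwedge^kN_\C$. The filtration $F$ is split along this basis by the value of $a(z)$. Hence $\Gr_F\widehat{\mathscr C}_\eta$ has the same underlying space, and its differential is the part of $D$ that preserves $a$.

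We compute the $(a,r)$-degrees of the three terms in \eqref{eq:total-local-differential}.
\begin{itemize}
\item $D_{\check g}$ multiplies by $x^u$ with $\psi(u)=1$, since $\check g$ is a height-one representative. It also lowers $k$ by one. So $a$ is unchanged and $r$ rises by one.
\item $D_g$ multiplies by $y^v$ with $\check\psi(v)=1$ and raises $k$ by one. Again $a$ is unchanged and $r$ rises by one.
\item $d_{\log}$ raises $k$ by one and leaves the monomial unchanged. So $a$ rises by one and $r$ is unchanged.
\end{itemize}
When two monomials do not lie in a common cone of $\Gamma$ (resp.\ $\check\Gamma$), the product $\star_\Gamma$ (resp.\ $\star_{\check\Gamma}$) is zero, which is consistent with the homogeneity. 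It follows that $d_{\log}$ maps $F^\lambda$ into $F^{\lambda+1}\subseteq F^{>\lambda}$ and therefore vanishes on $\Gr_F$. What survives is $D_{\check g}+D_g$, which has bidegree $(0,1)$. This gives \eqref{eq:graded-local-unsplit-complex}.

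The identification uses only the monomial basis, and face maps send basis tensors either to basis tensors or to zero. It is therefore canonical and compatible with restrictions.

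\textbf{Step 2: the tensor-product form.} At the stalk $\eta$ we have
\[
\mathcal B_{\Phi,\eta}\otimes_{\mathcal A_{\Phi,\eta}}\bigl(\mathcal A_{\Phi,\eta}\otimes\bigwedge^\bullet N_\C\bigr)\cong \C[c]^\Gamma\otimes\C[\check c]^{\check\Gamma}\otimes\bigwedge^\bullet N_\C .
\]
Under this isomorphism we compare differentials. Choose a basis $e_i$ of $c_\C$ with dual basis $e_i^*$, so that $\mathbf m_c=\sum_i e_i^*\otimes e_i$. Then $\iota_{\mathbf m_c}$ acts as $\sum_i\kappa(e_i^*)\,\iota_{e_i}$, which equals
\[
\sum_u\check\xi_u\,x^u\star_\Gamma(-)\otimes\sum_i e_i^*(u)\,\iota_{e_i}=\sum_u\check\xi_u\,(x^u\star_\Gamma -)\otimes\iota_u ,
\]
and this is exactly $D_{\check g}$. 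The same computation with $\mathbf n_{\check c}$ gives $D_g$. Here we use that $D_g$ only involves $v\in\check c\subseteq c^\perp$.

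We next check gradings. Under $\kappa$, a linear form $\ell\in c_\C^*$ goes to a combination of height-one monomials, so $\kappa$ is homogeneous of degree $(1,0)$ and the dual map is homogeneous of degree $(0,1)$. This matches the bigrading $a=k+i-j$, $r=i+j$ on $\operatorname{Cl}_\Phi$. Finally, the $\kappa$-maps are compatible with face restrictions, because restricting $\partial_\ell\check g_c$ to a face $d$ kills exactly the monomials with $u\notin d$. Hence the stalkwise isomorphisms glue to \eqref{eq:presented-bm-module-clifford-form}.

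\textbf{Main obstacle.} The one delicate point is the sign and ordering conventions in the relative tensor product. The left $\mathcal A$-module structure on $\mathcal B$ must make $\iota_{\mathbf m_c}$ and $\mathbf n_{\check c}\wedge$ act with the same signs as the contraction and wedge terms of $D$. This is also where orthogonality $(c,\check c)\in\Phi$ is needed: it ensures that $\iota_u$ anticommutes with $v\wedge$, so both differentials square to zero in the same way. The remaining verifications are routine bookkeeping.
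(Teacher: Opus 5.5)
Your proposal is correct and follows the paper's proof. The bidegree count shows that $d_{\log}$ raises $a$ by one and so vanishes on $\Gr_F$, leaving $D_{\check g}+D_g$ of bidegree $(0,1)$; through $\kappa$ and its checked analogue the identity tensors $\mathbf m_c$ and $\mathbf n_{\check c}$ act as $D_{\check g}$ and $D_g$, and face compatibility gives the base-change form. Your explicit basis computation of $\iota_{\mathbf m_c}$ and your use of height-one support to get $\psi(u)=\check\psi(v)=1$ only make explicit what the paper leaves implicit, and the sign issue you flag does not arise because the coefficient algebras are commutative and carry no odd parity.
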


\begin{proof}
By the bidegrees in
Definition~\ref{def:toric-face-koszul-de-rham-pair},
\(D_{\check g}\) and \(D_g\) induce the \((0,1)\)-differential on
\(\Gr_F\), whereas \(d_{\log}\), of bidegree \((1,0)\), induces
zero.  This gives \eqref{eq:graded-local-unsplit-complex}.

Under \eqref{eq:left-bm-kappa-map} and its checked analogue, the two
identity tensors in
\eqref{eq:universal-orthogonal-clifford-complex} act as
\(D_{\check g}\) and \(D_g\).  Thus the local complex is the base
change of \(\operatorname{Cl}_{\Phi,\eta}\) along
\(\mathcal A_{\Phi,\eta}\to
\mathcal B_{\Phi,\eta}^{\Gamma,\check\Gamma}(\check g,g)\).
Compatibility with face restriction proves
\eqref{eq:presented-bm-module-clifford-form}.
\end{proof}

Since \cref{thm:filtered-toric-face-direct-image} is a
quasi-isomorphism on every \(F^\lambda\), applying \(R\pi_*\) to the
distinguished triangles
\[
 F^{>\lambda}\longrightarrow F^\lambda
 \longrightarrow\Gr_F^\lambda
\]
shows that, for the subdivision induced by the product
\(\pi:\widehat\Phi\to\Phi\),
\begin{equation}\label{eq:toric-face-direct-image}
 \mathscr C_\Phi^{\Gamma,\check\Gamma}(\check g,g)
 \simeq
 R\pi_*
 \mathscr C_{\widehat\Phi}
 (\widehat{\check g},\widehat g).
\end{equation}
Taking derived global sections in
\eqref{eq:toric-face-direct-image},
with the induced Hodge bigrading
\eqref{eq:irregular-hodge-grading-convention},
gives
\begin{equation}\label{eq:toric-face-base-hilbert}
 \operatorname{Hilb}_{x,y}
 \bH^\bullet\bigl(
  \Phi,
  \mathscr C_\Phi^{\Gamma,\check\Gamma}(\check g,g)
 \bigr)
 =
 \operatorname{Hilb}_{x,y}
 \bH^\bullet\bigl(
  \widehat\Phi,
  \mathscr C_{\widehat\Phi}
  (\widehat{\check g},\widehat g)
 \bigr).
\end{equation}

\subsection{Primitive parts of the toric face modules}

We now apply the results on pure sheaves and weighted Ehrhart theory from
\cref{subsec:sheaves-on-fans,subsec:weighted-ehrhart-background}
to the two coefficient sheaves in
\eqref{eq:presented-bm-module-clifford-form}.

\begin{definition}[Facewise regularity]
\label{def:toric-face-facewise-regularity}
We say that \((\Gamma,\check g)\) is \emph{facewise regular} if,
for every cone \(c\in\Sigma\) and one (equivalently, every) basis
\(\ell_1,\ldots,\ell_{\dim c}\) of \(c_\C^*\), the sequence
\[
 \partial_{\ell_1}^\Gamma\check g_c,\ldots,
 \partial_{\ell_{\dim c}}^\Gamma\check g_c
\]
is regular in \(\C[c]^\Gamma\).  Define facewise regularity of
\((\check\Gamma,g)\) analogously.
\end{definition}

\begin{proposition}\label{prop:toric-face-coefficient-purity}
If \((\Gamma,\check g)\) and \((\check\Gamma,g)\) are facewise
regular, then
\(\mathcal B_{\Sigma,\check g}^\Gamma\) and
\(\mathcal B_{\check\Sigma,g}^{\check\Gamma}\) are pure.
\end{proposition}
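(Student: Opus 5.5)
We prove purity of \(\mathcal B_{\Sigma,\check g}^\Gamma\); the checked statement is identical. Purity has two parts: each stalk must be free and finitely generated over \(\mathcal A_{\Sigma,c}\), and each boundary map \(\C[c]^\Gamma\to\Gamma(\partial c,\mathcal B^\Gamma_{\Sigma,\check g})\) must be surjective.

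\emph{Freeness.} We would proceed as follows.
\begin{enumerate}
\item Finite generation: \(\C[c]^\Gamma\) is finitely generated over \(\C[c]\), which is integral over the subalgebra generated by the height-one monomials. The image of \(\kappa^\Gamma_{\check g,c}\) is generated by \(\dim c\) elements of positive degree forming a regular sequence, hence a homogeneous system of parameters. So \(\C[c]^\Gamma\) is a finitely generated graded module over \(\operatorname{Sym}(c_\C^*)\); equivalently, its quotient by these elements is finite-dimensional.
\item Local freeness: the toric face ring \(\C[c]^\Gamma\) is Cohen--Macaulay. Indeed, \(\Gamma|_c\) is a fan subdivision of a pointed cone, whose face poset is shellable up to a ball, so Hochster/Stanley-type arguments apply to toric face rings.
\item Given Cohen--Macaulayness, a regular system of parameters makes the ring free over the polynomial ring it generates. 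More directly, the facewise regularity hypothesis says exactly that \(\ell_1,\dots,\ell_{\dim c}\) act as a regular sequence of positive-degree elements. By graded Nakayama, a positively graded module over a polynomial ring on which the variables form a regular sequence is free, with minimal generators \(\overline{\C[c]^\Gamma}\).
\end{enumerate}
So freeness follows from the hypothesis, without needing Cohen--Macaulayness separately.

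\emph{Flabbiness.} A compatible family of sections over \(\partial c\) is, by Lemma \ref{lem:semigroup-toric-face-direct-image} applied to \(\Gamma|_{\partial c}\), a finite combination of monomials \(x^m\) with \(m\in\partial c\cap M\). Each such monomial lifts to \(x^m\in\C[c]^\Gamma\). The restriction map of \(\mathscr S^\Gamma_\Sigma\) kills interior monomials and fixes boundary ones, so \(\Gamma(\partial c,\mathscr S_\Sigma^\Gamma)\) is the span of boundary monomials and restriction is surjective as a map of vector spaces. The module structures are compatible, because restricting \(\partial^\Gamma_\ell\check g_c\) to a face \(d\) gives \(\partial^\Gamma_{\ell|_d}\check g_d\). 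So the boundary map is a surjective \(\mathcal A_{\Sigma,c}\)-module map.

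The one point to verify is that sections of \(\mathcal B^\Gamma_{\Sigma,\check g}\) over \(\partial c\) coincide with sections of the underlying sheaf \(\mathscr S^\Gamma_\Sigma\) over \(\partial c\). This holds because the module structure does not change the underlying sheaf of vector spaces.

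\emph{Main obstacle.} The main obstacle is freeness: facewise regularity must be applied in the graded setting so that graded Nakayama gives freeness rather than mere flatness. This requires checking that the \(\ell_i\) act with positive degree under the height grading, which holds since \(\psi>0\) on \(c\setminus\{0\}\). Since \(\kappa\) places \(\ell\) in height degree \(1\) via \(\check g\), a height-one representative, the grading is compatible. We then conclude purity in the sense of Definition \ref{def:pure-sheaves}.
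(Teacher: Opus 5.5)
Your proof is correct. The flabbiness half is the same as the paper's: the boundary restriction is identified with the projection that kills the monomials with exponent in \(\operatorname{relint}(c)\).

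The freeness half takes a more economical route. The paper proceeds in four steps:
\begin{enumerate}
\item it cites Ichim--R\"omer to know that \(\C[c]^\Gamma\) is Cohen--Macaulay, because its transverse section is a polyhedral ball;
\item it deduces that the quotient by the length-\(\dim c\) regular sequence is finite-dimensional;
\item it regards \(\C[c]^\Gamma\) as a maximal Cohen--Macaulay \(\mathcal A_{\Sigma,c}\)-module;
\item it concludes with Auslander--Buchsbaum and graded Quillen--Suslin.
\end{enumerate}

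You instead note that facewise regularity says directly that the variables of \(\mathcal A_{\Sigma,c}\) act as a regular sequence on the positively graded module \(\C[c]^\Gamma\). Hence \(\operatorname{Tor}_1^{\mathcal A_{\Sigma,c}}(\C,\C[c]^\Gamma)=0\), and graded Nakayama gives freeness at once. The finite rank follows from \(\dim\C[c]^\Gamma=\dim c\): the homogeneous regular sequence of that length leaves a zero-dimensional, hence finite-dimensional, graded quotient \(R_0^\Gamma(\check g_c,c)\).

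This shows two things. The Cohen--Macaulay citation is logically redundant for this proposition, since the hypothesis itself forces Cohen--Macaulayness. Quillen--Suslin is also unnecessary, because graded projective modules over a graded polynomial ring are free by Nakayama.

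The paper's extra input does buy something elsewhere. Because \(\C[c]^\Gamma\) is Cohen--Macaulay, facewise regularity is equivalent to finite-dimensionality of \(R_0^\Gamma(\check g_c,c)\). That makes it an open condition on the coefficients, which is what the generic choice in the proof of \cref{thm:orbifold-polytope-formula} relies on.

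Two small cleanups:
\begin{itemize}
\item Drop the phrase ``\(\C[c]^\Gamma\) is finitely generated over \(\C[c]\)''. The two products differ, so \(\C[c]^\Gamma\) is not naturally a \(\C[c]\)-module.
\item Your step (2) can be deleted entirely, since you never use it.
\end{itemize}
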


\begin{proof}
We prove the assertion on \(\Sigma\).  The transverse section of
\(\Gamma|_c\) is a polyhedral ball, so \(\C[c]^\Gamma\) is Cohen--Macaulay
\cite[Corollary~4.2(ii)]{IchimRomerCanonical}.  The regular sequence
in \cref{def:toric-face-facewise-regularity} has length
\(\dim c=\dim\C[c]^\Gamma\); its quotient is therefore
finite-dimensional.  Hence \(\C[c]^\Gamma\) is finite over
\(\mathcal A_{\Sigma,c}\), and it is a maximal Cohen--Macaulay
\(\mathcal A_{\Sigma,c}\)-module.  Auslander--Buchsbaum and graded
Quillen--Suslin show that \(\C[c]^\Gamma\) is free over
\(\mathcal A_{\Sigma,c}\).

The boundary restriction
\[
 \C[c]^\Gamma\longrightarrow
 \C[\partial c]^{\Gamma|_{\partial c}}
\]
is surjective, identifies the target with the compatible boundary
sections, and kills precisely the monomials whose exponents lie in
\(\operatorname{relint}(c)\).  Thus
\(\mathcal B_{\Sigma,\check g}^\Gamma\) is pure.  The checked
assertion is identical.
\end{proof}

For \(\sigma\in\Sigma\), let
\begin{equation*}J^\Gamma(\check g_\sigma)
 :=
 \bigl(
  \partial_\ell^\Gamma\check g_\sigma:
  \ell\in\sigma_\C^*
 \bigr)
 \subseteq\C[\sigma]^\Gamma
\end{equation*}
and set
\[
 R_0^\Gamma(\check g_\sigma,\sigma)
 :=\C[\sigma]^\Gamma/J^\Gamma(\check g_\sigma),
\]
and
\begin{equation*}R_1^\Gamma(\check g_\sigma,\sigma)
 :=
 \operatorname{im}\!\left(
  \C[\operatorname{relint}(\sigma)\cap M]^\Gamma
  \longrightarrow
  R_0^\Gamma(\check g_\sigma,\sigma)
 \right).
\end{equation*}
Define
\(R_i^{\check\Gamma}(g_{\check\sigma},\check\sigma)\), for
\(i=0,1\), in the same way.

By \eqref{eq:left-bm-kappa-map}, reduction modulo the positive-degree
ideal of \(\mathcal A_{\Sigma,c}\) gives
\[
 \overline{\C[c]^\Gamma}
 =
 R_0^\Gamma(\check g_c,c).
\]
Moreover, the boundary restriction fits into the exact sequence
\[
 0\longrightarrow
 \C[\operatorname{relint}(c)\cap M]^\Gamma
 \longrightarrow \C[c]^\Gamma
 \longrightarrow
 \C[\partial c]^{\Gamma|_{\partial c}}
 \longrightarrow0.
\]
After reduction modulo the positive-degree ideal of
\(\mathcal A_{\Sigma,c}\), right exactness and
\eqref{eq:pure-multiplicity-kernel} identify the multiplicity space as
\[
 \mathcal W_c
 \bigl(\mathcal B_{\Sigma,\check g}^\Gamma\bigr)
 =
 R_1^\Gamma(\check g_c,c).
\]

\begin{proposition}\label{prop:pure-toric-face-modules}
Under the facewise regularity conditions of
\cref{def:toric-face-facewise-regularity}, there are noncanonical
decompositions
\begin{align}
 \mathcal B_{\Sigma,\check g}^\Gamma
 &\simeq
 \bigoplus_{\sigma\in\Sigma}
 R_1^\Gamma(\check g_\sigma,\sigma)
 \otimes_\C\mathcal L_\Sigma^\sigma,
 \label{eq:first-factor-pure-decomposition}\\
 \mathcal B_{\check\Sigma,g}^{\check\Gamma}
 &\simeq
 \bigoplus_{\check\sigma\in\check\Sigma}
 R_1^{\check\Gamma}(g_{\check\sigma},\check\sigma)
 \otimes_\C\mathcal L_{\check\Sigma}^{\check\sigma}.
 \label{eq:second-factor-pure-decomposition}
\end{align}
Their multiplicity spaces have Hilbert series
\begin{align}
 \operatorname{Hilb}
 R_1^\Gamma(\check g_\sigma,\sigma)
 &=
 l^*_{\psi_\sigma}(\sigma;T),
 \label{eq:r1-first-lstar}\\
 \operatorname{Hilb}
 R_1^{\check\Gamma}(g_{\check\sigma},\check\sigma)
 &=
 l^*_{\check\psi_{\check\sigma}}
 (\check\sigma;T).
 \label{eq:r1-second-lstar}
\end{align}
Thus these Hilbert series are independent of the chosen
subdivision whenever the stated regularity condition holds.  
\end{proposition}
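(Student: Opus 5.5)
The decompositions \eqref{eq:first-factor-pure-decomposition} and \eqref{eq:second-factor-pure-decomposition} follow by combining \cref{prop:toric-face-coefficient-purity} with \cref{thm:pure-sheaf-decomposition}(ii). By facewise regularity, \(\mathcal B^\Gamma_{\Sigma,\check g}\) is pure, so it splits as \(\bigoplus_\sigma\mathcal W_\sigma\otimes\mathcal L^\sigma_\Sigma\). The multiplicity spaces were identified just before the statement: \(\mathcal W_\sigma(\mathcal B^\Gamma_{\Sigma,\check g})=R_1^\Gamma(\check g_\sigma,\sigma)\), using right exactness of \(-\otimes_{\mathcal A}\C\) on the boundary sequence and \eqref{eq:pure-multiplicity-kernel}. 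The checked statement is identical. So the real content is the Hilbert series identity \eqref{eq:r1-first-lstar}.

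For the Hilbert series, I will compare ranks at a fixed cone \(c\) in two ways.

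First, facewise regularity shows that \(\C[c]^\Gamma\) is free over \(\mathcal A_{\Sigma,c}\), with generators forming \(R_0^\Gamma(\check g_c,c)\). Since \(\mathcal A_{\Sigma,c}\) is a polynomial ring in \(\dim c\) variables of degree one,
\[
\operatorname{Hilb}R_0^\Gamma(\check g_c,c)=(1-T)^{\dim c}\operatorname{Hilb}\C[c]^\Gamma.
\]
As a graded vector space, \(\C[c]^\Gamma\) has basis \(x^m\), \(m\in c\cap M\), with degree \(\psi(m)\). This is the same basis and grading as \(\C[c]\), independent of \(\Gamma\). Hence \(\operatorname{Hilb}R_0^\Gamma=h^*_{\psi_c}(c;T)\) by \eqref{eq:weighted-hstar-definition}; note \(\psi\) is conewise linear on \(\Sigma\), so the grading is the same for every crepant \(\Gamma\).

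Second, evaluate the decomposition at the stalk \(c\) and reduce modulo \(\mathfrak m_c\). Using \eqref{eq:simple-pure-local-g}, this gives
\[
 h^*_{\psi_c}(c;T)=\sum_{\sigma\preceq c}\operatorname{Hilb}R_1^\Gamma(\check g_\sigma,\sigma)\,g([\sigma,c];T).
\]
Here I have to check the degree normalizations: the shifts in \(\mathcal W_\sigma\otimes\mathcal L^\sigma\) are carried by \(\mathcal W_\sigma\), and linear forms sit in degree one. This triangular system over the face lattice determines the \(\operatorname{Hilb}R_1\) recursively.

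To conclude, I would apply the inversion \eqref{eq:stanley-g-inversion}. Multiply the identity for each face \(c''\preceq c\) by \((-1)^{\dim c-\dim c''}g([c'',c]^*;T)\) and sum over \(c''\). The kernel identity then collapses the double sum to \(\operatorname{Hilb}R_1^\Gamma(\check g_c,c)\). The left side becomes exactly the definition \eqref{eq:weighted-lstar-definition} of \(l^*_{\psi_c}(c;T)\), using that \(h^*_{\psi_{c''}}\) is computed with the restricted height.

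The main obstacle is bookkeeping rather than conceptual: checking that \eqref{eq:simple-pure-local-g} applies with the exact interval \([\sigma,c]\) and the half-degree normalization, and that the inversion formula is used in the correct orientation (\(g\) on \([c',c'']\) versus the dual \(g([c'',c]^*)\)). I would verify this first in the simplicial case, where \(g\equiv1\) and the formula reduces to the Box/open-Box identities. Independence of the subdivision is then immediate, since the right-hand side involves only \(\psi\) and the face lattice of \(\Sigma\).
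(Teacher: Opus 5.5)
Your proposal is correct and follows the paper's proof step for step. The shared steps are: purity from \cref{prop:toric-face-coefficient-purity} combined with \cref{thm:pure-sheaf-decomposition}, with the multiplicity spaces identified as \(R_1^\Gamma\); the freeness/Koszul computation \(\operatorname{Hilb}R_0^\Gamma(\check g_c,c)=h^*_{\psi_c}(c;T)\) via the subdivision-independent monomial basis; reduction of the decomposition at \(c\) using \eqref{eq:simple-pure-local-g}; and inversion by \eqref{eq:stanley-g-inversion}, which you orient correctly to recover \eqref{eq:weighted-lstar-definition}.
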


\begin{proof}
The preceding identification of the multiplicity spaces and the
decomposition theorem for pure sheaves give
\eqref{eq:first-factor-pure-decomposition} and
\eqref{eq:second-factor-pure-decomposition}.

It remains to compute the multiplicities.  The monomial basis of
\(\C[c]^\Gamma\) is independent of the subdivision, and the Koszul
Hilbert series formula gives
\begin{equation*}\operatorname{Hilb}
 R_0^\Gamma(\check g_c,c)
 =
 (1-T)^{\dim c}
 \sum_{m\in c\cap M}T^{\psi(m)}
 =
 h^*_{\psi_c}(c;T).
\end{equation*}
Reducing \eqref{eq:first-factor-pure-decomposition} at \(c\) and
using \eqref{eq:simple-pure-local-g} yields
\begin{equation*}h^*_{\psi_c}(c;T)
 =
 \sum_{\sigma\preceq c}
 \operatorname{Hilb}
 R_1^\Gamma(\check g_\sigma,\sigma)
 g([\sigma,c];T).
\end{equation*}
Inverting this convolution by
\eqref{eq:stanley-g-inversion} gives exactly
\eqref{eq:weighted-lstar-definition}, proving
\eqref{eq:r1-first-lstar}.  
In the projective Gorenstein case, this recovers the
\(\Q\)-graded analogue of
\cite[Theorem~6.8]{BorisovMavlyutov}.
\end{proof}

\begin{remark}[A flat family of primitive parts]
\label{rem:relative-primitive-sector-bundle}
Assume in addition that the subdivision
\(\Gamma\to\Sigma\) is projective.
Fix \(c\in\Sigma\), and suppose that the facewise regularity
hypotheses hold in both \(\C[c]\) and \(\C[c]^\Gamma\).  A convex
integral support function for \(\Gamma|_c\) gives the standard flat
degeneration from \(\C[c]\) to \(\C[c]^\Gamma\).  Over the open locus
where the facewise logarithmic derivative sequences remain regular,
the corresponding primitive parts form a finite locally free
\(\Q\)-graded family whose fibres at \(t=1\) and \(t=0\) are
\(R_1(\check g_c,c)\) and
\(R_1^\Gamma(\check g_c,c)\), respectively.  Their Hilbert series are
therefore equal.  An isomorphism between the two fibres requires a
choice of trivialization of the family and hence is not canonical.
\end{remark}

\subsection{Primitive--support decomposition}

Assume the hypotheses of
\cref{prop:pure-toric-face-modules}.  For
\(\vartheta=(\sigma,\check\sigma)\in\Phi\), define the primitive
multiplicity space
\begin{equation*}V_\vartheta^{\Gamma,\check\Gamma}
 :=
 R_1^\Gamma(\check g_\sigma,\sigma)
 \otimes_\C
 R_1^{\check\Gamma}(g_{\check\sigma},\check\sigma)
\end{equation*}
and the support complex
\begin{equation*}\mathcal K_{\Phi,\vartheta}
 :=
 \mathcal L_\Phi^\vartheta
 \otimes_{\mathcal A_\Phi}\operatorname{Cl}_\Phi.
\end{equation*}
The first factor is finite-dimensional and has zero differential; it
records the primitive multiplicity attached to the chosen potentials
and subdivisions.  The second contains all face restrictions, local
intersection cohomology data, and the universal contraction--wedge
differential.  It depends only on the embedded orthogonality fan and
the lattice pairing.

\begin{proposition}\label{prop:toric-face-pure-decomposition}
There are noncanonical decompositions
\begin{align}
 \mathcal B_\Phi^{\Gamma,\check\Gamma}(\check g,g)
 &\simeq
 \bigoplus_{\vartheta\in\Phi}
 V_\vartheta^{\Gamma,\check\Gamma}
 \otimes_\C\mathcal L_\Phi^\vartheta,
 \label{eq:presented-bm-pure-decomposition}\\
 \mathscr C_\Phi^{\Gamma,\check\Gamma}(\check g,g)
 &\simeq
 \bigoplus_{\vartheta\in\Phi}
 V_\vartheta^{\Gamma,\check\Gamma}
 \otimes_\C\mathcal K_{\Phi,\vartheta}.
 \label{eq:toric-face-decomposition}
\end{align}
Consequently,
\begin{equation}\label{eq:toric-face-derived-sections-decomposition}
 R\Gamma\bigl(
  \Phi,
  \mathscr C_\Phi^{\Gamma,\check\Gamma}(\check g,g)
 \bigr)
 \simeq
 \bigoplus_{\vartheta\in\Phi}
 V_\vartheta^{\Gamma,\check\Gamma}
 \otimes_\C
 R\Gamma\bigl(\Phi,\mathcal K_{\Phi,\vartheta}\bigr).
\end{equation}
Thus derived global sections act only on the support complexes.
\end{proposition}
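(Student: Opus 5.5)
We take the two factor decompositions of \cref{prop:pure-toric-face-modules} as the starting point.
\eqref{eq:first-factor-pure-decomposition} and \eqref{eq:second-factor-pure-decomposition} give noncanonical isomorphisms of graded \(\mathcal A_\Sigma\)- and \(\mathcal A_{\check\Sigma}\)-module sheaves.
We form their external product on \(\Sigma\times\check\Sigma\).
External product is additive in each variable and is compatible with \(\otimes_\C\) by finite-dimensional multiplicity spaces, so
\[
 \mathcal B_{\Sigma,\check g}^\Gamma\boxtimes\mathcal B_{\check\Sigma,g}^{\check\Gamma}
 \simeq
 \bigoplus_{\sigma,\check\sigma}
 \bigl(R_1^\Gamma(\check g_\sigma,\sigma)\otimes R_1^{\check\Gamma}(g_{\check\sigma},\check\sigma)\bigr)
 \otimes_\C
 \bigl(\mathcal L_\Sigma^\sigma\boxtimes\mathcal L_{\check\Sigma}^{\check\sigma}\bigr)
\]
as \(\mathcal A_\Sigma\boxtimes\mathcal A_{\check\Sigma}\)-modules.
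Next we restrict along the inclusion \(i:\Phi\hookrightarrow\Sigma\times\check\Sigma\).
This restriction is the restriction to an open subset, namely a subfan of the product poset.
It commutes with direct sums, and on stalks it is the identity, with \(\mathcal A_{\Phi,\eta}=\mathcal A_{\Sigma,c}\otimes\mathcal A_{\check\Sigma,\check c}\).
\cref{lem:orthogonal-external-product-simple} then converts each summand into \(\mathcal L_\Phi^{(\sigma,\check\sigma)}\) when \(\sigma\perp\check\sigma\), and into zero otherwise.
The nonzero summands are exactly the \(\vartheta=(\sigma,\check\sigma)\in\Phi\), which gives \eqref{eq:presented-bm-pure-decomposition}.
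The bigradings match the Hilbert conventions because the \(\Sigma\)-factor lives in degree \((\ast,0)\) and the \(\check\Sigma\)-factor in degree \((0,\ast)\).

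For \eqref{eq:toric-face-decomposition}, we apply the functor \(-\otimes_{\mathcal A_\Phi}\operatorname{Cl}_\Phi\) to the isomorphism just obtained, using \cref{prop:graded-total-local-form}, namely \eqref{eq:presented-bm-module-clifford-form}.
The only point to check is that the isomorphism in \eqref{eq:presented-bm-pure-decomposition} is \(\mathcal A_\Phi\)-linear and commutes with face restrictions.
It does, because it is a morphism of \(\mathcal A_\Phi\)-module sheaves.
Hence tensoring with the complex of \(\mathcal A_\Phi\)-modules \(\operatorname{Cl}_\Phi\) gives an isomorphism of complexes of sheaves.
Here the differential of \(\operatorname{Cl}_\Phi\) is \(\mathcal A_\Phi\)-linear, since \(\iota_{\mathbf m_c}+\mathbf n_{\check c}\wedge\) has coefficients in \(\mathcal A_{\Phi,\eta}\), and the multiplicity spaces carry the zero differential.
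This yields \(\bigoplus_\vartheta V_\vartheta^{\Gamma,\check\Gamma}\otimes_\C(\mathcal L_\Phi^\vartheta\otimes_{\mathcal A_\Phi}\operatorname{Cl}_\Phi)\), which is exactly \eqref{eq:toric-face-decomposition}.
The isomorphism respects the \((a,r)\)-bigrading once \(V_\vartheta\) is given its \((\psi,\check\psi)\)-height grading.

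Finally, \(R\Gamma(\Phi,-)\) is additive.
It also commutes with \(V\otimes_\C-\) for a finite-dimensional vector space \(V\), since such a tensor product is a finite direct sum of shifted copies.
Applying it to \eqref{eq:toric-face-decomposition} gives \eqref{eq:toric-face-derived-sections-decomposition}.
The complexes are unbounded in polynomial degree, so we work one bidegree at a time.
In each fixed \((a,r)\)-bidegree the stalks are finite-dimensional and the exterior degree is bounded, so each graded piece is a bounded complex on a finite poset and no convergence issue arises.

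The main obstacle is the restriction step.
We must ensure that the isomorphism \(i^*(\mathcal L_\Sigma^\sigma\boxtimes\mathcal L_{\check\Sigma}^{\check\sigma})\cong\mathcal L_\Phi^\vartheta\) in \cref{lem:orthogonal-external-product-simple} is one of \(\mathcal A_\Phi\)-modules.
We must also ensure that restricting the decomposition to the subfan \(\Phi\) loses no summand supported on non-orthogonal pairs that meets \(\Phi\).
We would check this stalkwise.
For \(\eta=(c,\check c)\in\Phi\), the stalk of the \((\sigma,\check\sigma)\)-summand is nonzero only if \(\sigma\preceq c\) and \(\check\sigma\preceq\check c\).
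Those conditions force \(\sigma\perp\check\sigma\), so such summands vanish on \(\Phi\) identically, as the lemma asserts.
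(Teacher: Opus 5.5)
Your proposal is correct and follows essentially the same route as the paper. The steps match in order: the external product of \eqref{eq:first-factor-pure-decomposition} and \eqref{eq:second-factor-pure-decomposition}; restriction to the subfan \(\Phi\), where \cref{lem:orthogonal-external-product-simple} kills the non-orthogonal pairs; tensoring with \(\operatorname{Cl}_\Phi\) via \eqref{eq:presented-bm-module-clifford-form}; and additivity of \(R\Gamma\) on the finite space \(\Phi\) with finite-dimensional multiplicity spaces.

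One side remark is slightly off. For fixed \(a\), the complex \(\mathcal K_{\Phi,\vartheta}^{a,\bullet}\) is bounded below in \(r\) but need not be bounded above. This is harmless, because \(R\Gamma\) commutes with finite direct sums and with \(V\otimes_\C-\) for finite-dimensional \(V\) regardless, and on the finite poset each hypercohomology degree involves only finitely many values of \(r\).
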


\begin{proof}
Take the external product of
\eqref{eq:first-factor-pure-decomposition} and
\eqref{eq:second-factor-pure-decomposition}, restrict it to
\(\Phi\), and apply
\eqref{eq:orthogonal-external-product-simple}.  The nonorthogonal
pairs vanish and the remaining terms give
\eqref{eq:presented-bm-pure-decomposition}.  Tensoring with the
termwise free \(\mathcal A_\Phi\)-complex
\(\operatorname{Cl}_\Phi\) and using
\eqref{eq:presented-bm-module-clifford-form} proves
\eqref{eq:toric-face-decomposition}.  The final assertion follows
because \(\Phi\) is finite and all multiplicity spaces are
finite-dimensional.
\end{proof}

Accordingly, the homogeneous pieces of the support complex are
\begin{equation}\label{eq:simple-support-bigrading}
 \mathcal K_{\Phi,\vartheta}^{a,r}
 :=
 \bigoplus_{i+j=r}
 (\mathcal L_\Phi^\vartheta)_{i,j}
 \otimes\bigwedge^{a-i+j}N_\C.
\end{equation}
Its differential preserves
\(a\) and raises \(r\) by one.  Define the support series
\begin{equation*}I_{\Phi,\vartheta}(x,y)
 :=
 \sum_{a,q}
 \dim_\C
 \bH^q\bigl(
  \Phi,\mathcal K_{\Phi,\vartheta}^{a,\bullet}
 \bigr)x^ay^q.
\end{equation*}
Here \(q\) is the hypercohomological degree of the support complex,
namely the sum of its internal \(r\)-degree and the cellular degree $s$.

\subsection{The intrinsic Hilbert series}

The preceding support series and local weighted \(h^*\)-polynomials
define, without choosing potentials or subdivisions,
\begin{equation}\label{eq:intrinsic-polynomial}
 \mathscr I_\Phi(x,y)
 :=
 \sum_{\vartheta=(\sigma,\check\sigma)\in\Phi}
 l^*_{\psi_\sigma}(\sigma;xy)
 l^*_{\check\psi_{\check\sigma}}
 (\check\sigma;y/x)
 I_{\Phi,\vartheta}(x,y).
\end{equation}
It depends only on the two embedded lattice fans, their height
functions, and the lattice pairing.  The sum over \(\vartheta\) is
finite.  We do not need its finiteness in the bigrading at this point;
\cref{prop:toric-face-hilbert} will show that \(\mathscr I_\Phi(x,y)\)
is a Laurent polynomial.

\begin{proposition}\label{prop:toric-face-hilbert}
Under the hypotheses of
\cref{prop:pure-toric-face-modules}, the Hilbert series for every pair
of crepant subdivisions and every pair of potentials satisfying the
stated regularity hypotheses is \(\mathscr I_\Phi\):
\begin{equation*}\operatorname{Hilb}_{x,y}
 \bH^\bullet\bigl(
  \Phi,
  \mathscr C_\Phi^{\Gamma,\check\Gamma}(\check g,g)
 \bigr)
 =
 \mathscr I_\Phi(x,y).
\end{equation*}
In particular, this Hilbert series is independent of the projective
subdivisions and of the potential coefficients within the regular
locus.
\end{proposition}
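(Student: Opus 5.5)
The plan is to take the Hilbert series of both sides of the decomposition \eqref{eq:toric-face-derived-sections-decomposition} and identify each summand with the corresponding term of \eqref{eq:intrinsic-polynomial}. Proposition~\ref{prop:toric-face-pure-decomposition} expresses the derived global sections as
\[
 \bigoplus_{\vartheta\in\Phi}V_\vartheta^{\Gamma,\check\Gamma}\otimes_\C R\Gamma(\Phi,\mathcal K_{\Phi,\vartheta}).
\]
The first factor carries zero differential, so the hypercohomology of each summand is \(V_\vartheta^{\Gamma,\check\Gamma}\otimes\bH^\bullet(\Phi,\mathcal K_{\Phi,\vartheta})\). Since \(\Phi\) is finite, the Hilbert series is additive over \(\vartheta\) and multiplicative over the tensor factors. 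It therefore suffices, for each \(\vartheta=(\sigma,\check\sigma)\), to compute how a homogeneous element of \(V_\vartheta^{\Gamma,\check\Gamma}\) shifts the irregular Hodge bidegree \((\lambda,\mu)=(a,r+s)\) of \eqref{eq:irregular-hodge-grading-convention}.

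The central step is this degree bookkeeping. The decomposition \eqref{eq:presented-bm-pure-decomposition} is a graded isomorphism of \(\mathcal A_\Phi\)-modules. Consider an element of \(R_1^\Gamma(\check g_\sigma,\sigma)\) of \(\psi\)-degree \(i\) and an element of \(R_1^{\check\Gamma}(g_{\check\sigma},\check\sigma)\) of \(\check\psi\)-degree \(j\). Their tensor product multiplies \(\mathcal L_\Phi^\vartheta\) with polynomial bidegree shifted by \((i,j)\). By the formulas \(a=k+\psi(m)-\check\psi(n)\) and \(r=\psi(m)+\check\psi(n)\), this shifts \((a,r)\) by \((i-j,i+j)\). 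The cellular degree \(s\) is unchanged, because the multiplicity space is a constant vector space. Hence the monomial \(x^ay^q\) of \(I_{\Phi,\vartheta}\) is multiplied by
\[
 x^{i-j}y^{i+j}=(xy)^i(y/x)^j .
\]
Summing over bases and using \eqref{eq:r1-first-lstar} and \eqref{eq:r1-second-lstar}, the contribution of \(\vartheta\) is \(l^*_{\psi_\sigma}(\sigma;xy)\,l^*_{\check\psi_{\check\sigma}}(\check\sigma;y/x)\,I_{\Phi,\vartheta}(x,y)\). Summing over \(\vartheta\) yields exactly \(\mathscr I_\Phi(x,y)\).

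Two points need care, and the second is the main obstacle.

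\emph{Finiteness.} The left side is the Hilbert series of finite-dimensional spaces. Indeed, each graded piece of \(\bH^\bullet\) is finite-dimensional, and since the \(l^*\)-polynomials have nonnegative coefficients, each \(I_{\Phi,\vartheta}\) is dominated termwise by the left side. From this one gets that \(\mathscr I_\Phi\) is a Laurent polynomial, provided total finiteness is known. I would obtain total finiteness from the Harder--Lee identification through \eqref{eq:toric-face-base-hilbert} and Theorem~\ref{thm:simplicial-comparison} for a simplicial ray-supported refinement, where \(\bH^\bullet\) computes finite-dimensional twisted cohomology. Alternatively, each support complex can be checked directly to have finite hypercohomology.

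\emph{Consistency of the pure decomposition with the Clifford differential.} The isomorphism \eqref{eq:toric-face-decomposition} only exists because the differential of \(\operatorname{Cl}_\Phi\) is \(\mathcal A_\Phi\)-linear. The splitting of the coefficient sheaf is a noncanonical \(\mathcal A_\Phi\)-module isomorphism, so it intertwines the differentials after \(\otimes_{\mathcal A_\Phi}\operatorname{Cl}_\Phi\). I will verify explicitly that this tensoring preserves the \((a,r)\)-bigrading, so that the shifts computed above are legitimate.

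With these in place, independence of the subdivisions and of the coefficients within the regular locus is immediate. The intrinsic formula involves only \(\Phi\), the height functions, and the pairing, since \(l^*\) is defined purely from \(\psi\) by \eqref{eq:weighted-lstar-definition}.
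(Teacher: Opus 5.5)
Your proposal is correct and follows the paper's proof essentially line by line. You take Hilbert series of the derived decomposition \eqref{eq:toric-face-derived-sections-decomposition}, observe that a multiplicity element of Newton bidegree $(\nu,\check\nu)$ shifts $(a,r)$ by $(\nu-\check\nu,\nu+\check\nu)$ without changing the cellular degree, so that it contributes the factor $(xy)^{\nu}(y/x)^{\check\nu}$, and then substitute \eqref{eq:r1-first-lstar} and \eqref{eq:r1-second-lstar}. Your extra paragraph on total finiteness is not needed for the stated equality: each bigraded coefficient on the left is already finite-dimensional, and the paper does not address total finiteness inside this proof. You can drop that paragraph or keep it as a side remark on when $\mathscr I_\Phi$ is a genuine Laurent polynomial.
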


\begin{proof}
For
\(\vartheta=(\sigma,\check\sigma)\),
\eqref{eq:r1-first-lstar} and
\eqref{eq:r1-second-lstar} give
\begin{equation}\label{eq:primitive-multiplicity-hilbert}
 \operatorname{Hilb}_{U,V}
 V_\vartheta^{\Gamma,\check\Gamma}
 =
 l^*_{\psi_\sigma}(\sigma;U)
 l^*_{\check\psi_{\check\sigma}}(\check\sigma;V).
\end{equation}
Let a homogeneous element of this multiplicity space have Newton
degrees \(\nu\) and \(\check\nu\), and consider a class in
\[
 \bH^q\bigl(
  \Phi,\mathcal K_{\Phi,\vartheta}^{a,\bullet}
 \bigr).
\]
By the definitions of \(a\) and \(r\), a primitive multiplicity of
Newton bidegree \((\nu,\check\nu)\) shifts the support degrees by
\[
 a\longmapsto a+\nu-\check\nu,
 \qquad
 r\longmapsto r+\nu+\check\nu.
\]
Since \(\deg_N=a+r\) and \(q\) already includes the cellular degree, the
irregular Hodge bidegree of the corresponding tensor is
\begin{equation*}\lambda=a+\nu-\check\nu,
 \qquad
 \mu=q+\nu+\check\nu.
\end{equation*}
Equivalently,
\[
 x^\lambda y^\mu
 =
 (xy)^\nu(y/x)^{\check\nu}x^ay^q.
\]
The derived decomposition
\eqref{eq:toric-face-derived-sections-decomposition} therefore yields
\[
 \operatorname{Hilb}_{x,y}
 \bH^\bullet\bigl(
  \Phi,
  \mathscr C_\Phi^{\Gamma,\check\Gamma}(\check g,g)
 \bigr)
 =
 \sum_{\vartheta\in\Phi}
 \left.
 \operatorname{Hilb}_{U,V}
 V_\vartheta^{\Gamma,\check\Gamma}
 \right|_{U=xy,\,V=y/x}
 I_{\Phi,\vartheta}(x,y).
\]
Substitution of
\eqref{eq:primitive-multiplicity-hilbert} gives
\eqref{eq:intrinsic-polynomial}, proving the result.
\end{proof}
 \section{Stringy irregular Hodge numbers}
\label{sec:stringy-irregular-hodge-numbers}

The preceding sections compute the orbifold irregular Hodge polynomial
of a simplicial toric Landau--Ginzburg datum from a toric face complex.
We now show that the answer forgets the chosen simplicial fan and
retains only its radial support and Newton polytope at infinity.

For lattice polytopes
\[
 P\subset M_\R,
 \qquad
 Q\subset N_\R
\]
containing \(0\) and satisfying
\[
 \langle P,Q\rangle\geq0,
\]
put
\begin{equation*}\Phi_{P,Q}
 :=(\Sigma_P\oplus\Sigma_Q)_0
\end{equation*}
for the orthogonality fan of their spanning fans, and write
\begin{equation*}\mathscr I_{P,Q}(x,y)
 :=\mathscr I_{\Phi_{P,Q}}(x,y).
\end{equation*}

\begin{theorem}\label{thm:orbifold-polytope-formula}
Let \((\bSigma,f)\) be a simplicial toric Landau--Ginzburg datum and
put
\[
 P:=P_{\bSigma},
 \qquad
 Q:=P_{f,\infty}.
\]
Then
\begin{equation}\label{eq:orbifold-polytope-formula}
 \Forb(U_{\bSigma},Q;x,y)
 =\mathscr I_{P,Q}(x,y).
\end{equation}
In particular, for fixed \(Q\), the orbifold irregular Hodge
polynomial depends on \(\bSigma\) only through its radial support
\(P_{\bSigma}\subset M_\R\).
\end{theorem}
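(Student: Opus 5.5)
The plan is to realise \((\bSigma,f)\) as one half of a Clarke mirror pair whose partner is built from the spanning fan of \(Q\). Then chain together \cref{thm:simplicial-comparison}, \eqref{eq:toric-face-base-hilbert} and \cref{prop:toric-face-hilbert}, so that the computation moves from the simplicial fans down to \(\Phi_{P,Q}\).

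\textbf{Step 1: the mirror pair and the two subdivisions.} Set \(\check\Sigma:=\Sigma_Q\) with its stacky vectors. By \cref{prop:simplicial-presentations-common-coarsening}(2) there is a projective ray-preserving crepant simplicial subdivision \(\check{\boldsymbol\Gamma}\to\boldsymbol\Sigma_Q\), and \(P_{\check{\boldsymbol\Gamma}}=Q\). On the \(M\)-side, \cref{prop:simplicial-presentations-common-coarsening}(1) shows that \(\bSigma\to\boldsymbol\Sigma_P\) is a projective crepant subdivision, and \(\bSigma\) is already simplicial.

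Now choose the potentials.
\begin{itemize}
\item \(g\) is a height-one representative of \(Q\), supported on the stacky rays of \(\check\Gamma\); these rays are the vertices of \(Q\) at infinity.
\item \(\check g\) is supported on the stacky rays of \(\Sigma\), with Newton polytope at infinity \(P\). This is possible because every vertex of \(P\) at infinity is a ray vector.
\end{itemize}
Both have generic nonzero coefficients. Then \((\bSigma,g)\) and \((\check{\boldsymbol\Gamma},\check g)\) are simplicial ray-supported data with \(\langle |\Sigma|,|\check\Gamma|\rangle\ge0\), since this follows from \(\langle P,Q\rangle\ge0\).

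\textbf{Step 2: the chain of equalities.} By \cref{thm:simplicial-comparison} applied to the orthogonality fan \(\widehat\Phi=(\Sigma\oplus\check\Gamma)_0\),
\[
 \Forb(U_{\bSigma},g;x,y)=\operatorname{Hilb}_{x,y}\bH^{\bullet}\bigl(\widehat\Phi,\mathscr C_{\widehat\Phi}(\check g,g)\bigr).
\]
The right-hand side uses trivial subdivisions of the simplicial fans. The map \(\widehat\Phi\to\Phi_{P,Q}\) is induced by the crepant subdivisions \(\Sigma\to\Sigma_P\) and \(\check\Gamma\to\Sigma_Q\). Hence \eqref{eq:toric-face-base-hilbert} rewrites the right-hand side as \(\operatorname{Hilb}_{x,y}\bH^\bullet(\Phi_{P,Q},\mathscr C^{\Sigma,\check\Gamma}_{\Phi_{P,Q}}(\check g,g))\). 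Then \cref{prop:toric-face-hilbert} identifies this with \(\mathscr I_{P,Q}\). Finally, coefficient invariance \cite[Corollary~6.6]{WangCoefficient} replaces \(g\) by any potential nondegenerate at infinity with Newton polytope \(Q\), giving \(\Forb(U_{\bSigma},Q;x,y)\).

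\textbf{Step 3: the main obstacle, facewise regularity.} What must be checked is the facewise regularity of \((\Sigma,\check g)\) over \(\Sigma_P\) and of \((\check\Gamma,g)\) over \(\Sigma_Q\), for generic coefficients. We also need this genericity to be compatible with the ray-supported generic choice used in \cref{thm:simplicial-comparison}.

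The first thing to try is a transfer argument. Fix a cone \(c\) of \(\Sigma_P\). The height grading makes \(\C[c]^\Sigma\) a graded Cohen--Macaulay ring of dimension \(\dim c\), by the Ichim--R\"omer argument in \cref{prop:toric-face-coefficient-purity}. The elements \(\partial_\ell^\Sigma\check g_c\) are homogeneous of degree one, so regularity is equivalent to the quotient being finite-dimensional. That in turn means the \(\partial_\ell\) have no common zero on \(\operatorname{Proj}\).

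Since \(\C[c]^\Sigma\) is a Stanley--Reisner-type face ring, its minimal primes correspond to the maximal cones \(\gamma\) of \(\Sigma|_c\). On each such component the \(\partial_\ell\) restrict to derivatives of \(\check g_\gamma\), and \(\gamma\) is simplicial with ray support. So the condition reduces to a torus-orbitwise nondegeneracy of \(\check g\) restricted to the faces \(\Delta_\gamma\), which holds for generic coefficients by Bertini/Kouchnirenko. Regularity is an open condition in the coefficients, so there is a Zariski-dense set of coefficients satisfying all finitely many requirements simultaneously.

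The same argument applies to \((\check\Gamma,g)\). This will complete the proof.
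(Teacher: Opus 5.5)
Your proposal is correct and is essentially the paper's proof. It uses the same subdivision $\check{\boldsymbol\Gamma}\to\boldsymbol\Sigma_Q$, the same induced map $\widehat\Phi=(\Sigma\oplus\check\Gamma)_0\to\Phi_{P,Q}$, and the same combination of \cref{thm:simplicial-comparison}, \eqref{eq:toric-face-base-hilbert} and \cref{prop:toric-face-hilbert}; the paper additionally detours through $\mathscr I_{\widehat\Phi}$ by applying \cref{prop:toric-face-hilbert} on $\widehat\Phi$ as well, which your chain does not need. Your minimal-prime argument proves the facewise regularity that the paper only asserts to be a dense open condition, and in fact shows it holds for every choice of nonzero ray-supported coefficients. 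The genuinely generic requirement, which you should state explicitly, is that $g$ be nondegenerate at infinity, since coefficient invariance only applies to such $g$.
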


\begin{proof}
Choose a projective ray-preserving crepant simplicial subdivision
\[
 \check{\boldsymbol\Gamma}
 \longrightarrow\boldsymbol\Sigma_Q.
\]
Together with the canonical projective crepant subdivision
\(\bSigma\to\boldsymbol\Sigma_P\), it induces
\[
 \pi:\widehat\Phi
 :=(\Sigma\oplus\check\Gamma)_0
 \longrightarrow\Phi_{P,Q}.
\]
Choose the coefficients of ray-supported height-one potentials on the
two simplicial fans so that both potentials are nondegenerate at
infinity and the facewise regularity hypotheses of
\cref{def:toric-face-facewise-regularity} hold.  Such a simultaneous choice is
possible because each requirement defines a dense Zariski-open subset
of the irreducible coefficient torus.  There are only finitely many
faces and toric face rings to consider, so the intersection of these
open subsets is again dense and nonempty.  Orbifold
coefficient invariance, followed by
\cref{thm:simplicial-comparison,prop:toric-face-hilbert}, gives
\begin{equation}\label{eq:orbifold-refined-intrinsic-series}
 \Forb(U_{\bSigma},Q;x,y)
 =\mathscr I_{\widehat\Phi}(x,y).
\end{equation}

Apply \cref{prop:toric-face-hilbert} once more, now to the toric face
complex on \(\Phi_{P,Q}\) determined by
\(\Sigma\to\Sigma_P\) and
\(\check\Gamma\to\Sigma_Q\).  The direct image identity
\eqref{eq:toric-face-base-hilbert} then gives, with the chosen
potentials suppressed from the notation,
\[
 \mathscr I_{P,Q}(x,y)
 =
 \operatorname{Hilb}_{x,y}
 \bH^\bullet\!\left(
  \Phi_{P,Q},
  \mathscr C_{\Phi_{P,Q}}^{\Sigma,\check\Gamma}
 \right)
 =\mathscr I_{\widehat\Phi}(x,y).
\]
Together with \eqref{eq:orbifold-refined-intrinsic-series}, this proves
\eqref{eq:orbifold-polytope-formula}.
\end{proof}

\begin{corollary}\label{cor:intrinsic-polytope-mirror}
Let \(d:=\operatorname{rank}M=\operatorname{rank}N\).  For lattice
polytopes
\[
 P\subset M_\R,
 \qquad
 Q\subset N_\R
\]
containing \(0\) and satisfying
\(\langle P,Q\rangle\geq0\), one has
\begin{equation}\label{eq:intrinsic-polytope-mirror}
 \mathscr I_{P,Q}(x,y)
 =
 x^d\mathscr I_{Q,P}(x^{-1},y).
\end{equation}
No full-dimensionality assumption is imposed on \(P\) or \(Q\).
\end{corollary}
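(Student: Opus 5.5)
Via Theorem~\ref{thm:orbifold-polytope-formula}, both sides of \eqref{eq:intrinsic-polytope-mirror} become orbifold irregular Hodge polynomials of a mirror pair, and the Harder--Lee formula \eqref{eq:intro-harder-lee-mirror} supplies the relation. Choose, by Proposition~\ref{prop:simplicial-presentations-common-coarsening}(2), projective ray-preserving crepant simplicial subdivisions \(\boldsymbol\Gamma\to\boldsymbol\Sigma_P\) and \(\check{\boldsymbol\Gamma}\to\boldsymbol\Sigma_Q\). Their radial supports are \(P\) and \(Q\), and their stacky rays are exactly the nonzero vertices of \(P\) and \(Q\). Take ray-supported height-one potentials \(g\) on the rays of \(\check\Gamma\) and \(\check g\) on the rays of \(\Gamma\), with generic coefficients. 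Then \(P_{g,\infty}=Q\) and \(P_{\check g,\infty}=P\), and \(\langle P,Q\rangle\geq0\) makes each potential regular on the other fan. Hence \((\boldsymbol\Gamma,g)\) and \((\check{\boldsymbol\Gamma},\check g)\) are simplicial toric Landau--Ginzburg data forming a stacky Clarke mirror pair. Genericity, exactly as in the proof of Theorem~\ref{thm:orbifold-polytope-formula}, makes both potentials nondegenerate at infinity.

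Applying Theorem~\ref{thm:orbifold-polytope-formula} to each datum gives
\[
 \Forb(U_{\boldsymbol\Gamma},Q;x,y)=\mathscr I_{P,Q}(x,y),
 \qquad
 \Forb(U_{\check{\boldsymbol\Gamma}},P;x,y)=\mathscr I_{Q,P}(x,y).
\]
The Harder--Lee mirror theorem \cite[Theorem~5.16]{HarderLee} gives \(f_{\mathrm{orb}}^{\lambda,\mu}(U_{\boldsymbol\Gamma},g)=f_{\mathrm{orb}}^{d-\lambda,\mu}(U_{\check{\boldsymbol\Gamma}},\check g)\). In generating-series form this reads
\[
 \Forb(U_{\boldsymbol\Gamma};x,y)=\sum f^{d-\lambda,\mu}x^\lambda y^\mu
 =x^d\,\Forb(U_{\check{\boldsymbol\Gamma}};x^{-1},y),
\]
which is \eqref{eq:intrinsic-polytope-mirror}. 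Coefficient invariance \cite[Corollary~6.6]{WangCoefficient} removes the dependence on the specific ray-supported potentials.

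The main obstacle is checking that the hypotheses of \cite[Theorem~5.16]{HarderLee} hold when \(P\) or \(Q\) is not full-dimensional, in which case the rays need not span \(M_\Q\) or \(N_\Q\). First I would verify that our model \(U_{\bSigma}=[(T_M\times\mathcal X_{\bSigma,M_\Sigma})/T_{M_\Sigma}]\) is exactly the Harder--Lee model in this setting. If their theorem requires spanning rays, I would instead derive the duality directly from our complexes, since \(\Phi_{P,Q}\) and \(\Phi_{Q,P}\) are the same fan with the factors swapped. Swapping the roles of \(M\) and \(N\) exchanges contraction by \(\mathbf m_c\) with wedge by \(\mathbf n_{\check c}\). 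Hodge star on \(\bigwedge^\bullet N_\C\cong\bigwedge^{d-\bullet}M_\C\) intertwines these, sending \(k\mapsto d-k\) and \((i,j)\mapsto(j,i)\). This transforms \(a=k+i-j\) into \(d-a\) and fixes \(r\). Under the same swap the factors \(l^*(\sigma;xy)\,l^*(\check\sigma;y/x)\) are exchanged with \(x\mapsto x^{-1}\), so termwise one would have \(I_{\Phi_{P,Q},\vartheta}(x,y)=x^dI_{\Phi_{Q,P},\vartheta^{\mathrm t}}(x^{-1},y)\). That route would prove \eqref{eq:intrinsic-polytope-mirror} term by term in \eqref{eq:intrinsic-polynomial}.
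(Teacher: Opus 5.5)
Your main argument is the paper's proof: choose projective crepant simplicial subdivisions of \(\boldsymbol\Sigma_P\) and \(\boldsymbol\Sigma_Q\) with generic ray-supported height-one potentials, apply \cite[Theorem~5.16]{HarderLee} to the resulting stacky Clarke pair, and use \cref{thm:orbifold-polytope-formula} to identify the two sides with \(\mathscr I_{P,Q}(x,y)\) and \(x^d\mathscr I_{Q,P}(x^{-1},y)\). The paper applies Harder--Lee without comment on non-spanning rays, and your fallback via the Hodge star \(\bigwedge^kN_\C\cong\bigwedge^{d-k}M_\C\) also works: it sends \((a,r)\mapsto(d-a,r)\) on the support complexes and swaps the two \(l^*\) factors under \(x\mapsto x^{-1}\), so after routine sign twists it gives a termwise combinatorial proof that does not use the mirror theorem.
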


\begin{proof}
Choose projective crepant simplicial subdivisions
\[
 \boldsymbol\Gamma\longrightarrow\boldsymbol\Sigma_P,
 \qquad
 \check{\boldsymbol\Gamma}
 \longrightarrow\boldsymbol\Sigma_Q,
\]
and generic ray-supported height-one potentials \(g_Q\) and
\(\check g_P\) with Newton polytopes \(Q\) and \(P\), respectively.
The resulting simplicial data form a stacky Clarke mirror pair.
Harder--Lee mirror symmetry \cite[Theorem~5.16]{HarderLee} gives
\[
 \Forb(U_{\boldsymbol\Gamma},g_Q;x,y)
 =
 x^d
 \Forb(U_{\check{\boldsymbol\Gamma}},\check g_P;x^{-1},y).
\]
By \cref{thm:orbifold-polytope-formula}, the two sides are
\(\mathscr I_{P,Q}(x,y)\) and
\(x^d\mathscr I_{Q,P}(x^{-1},y)\), respectively.  This proves
\eqref{eq:intrinsic-polytope-mirror}.  
\end{proof}

\begin{definition}[Stringy irregular Hodge numbers]
\label{def:stringy-irregular-hodge-numbers}
For a toric Landau--Ginzburg datum \((\bSigma,f)\),
put
\[
 P:=P_{\bSigma},
 \qquad
 Q:=P_{f,\infty}.
\]
Its \emph{stringy irregular Hodge polynomial} is
\begin{equation*}\Fst(\bSigma,f;x,y)
 :=\mathscr I_{P,Q}(x,y),
\end{equation*}
and its \emph{stringy irregular Hodge numbers} are
\begin{equation*}f_{\mathrm{st}}^{\lambda,\mu}(\bSigma,f)
 :=[x^\lambda y^\mu]\mathscr I_{P,Q}(x,y).
\end{equation*}
Thus these invariants depend only on the pair of lattice polytopes
\((P_{\bSigma},P_{f,\infty})\); we also write them as
\(\Fst(P,Q;x,y)\) and \(f_{\mathrm{st}}^{\lambda,\mu}(P,Q)\).
\end{definition}

By \cref{thm:orbifold-polytope-formula} and orbifold coefficient
invariance, if
\(\boldsymbol\Gamma\to\bSigma\) is any projective crepant simplicial
subdivision and \(h\) is nondegenerate at infinity with
\(P_{h,\infty}=Q=P_{f,\infty}\), then
\[
 \Fst(\bSigma,f;x,y)
 =\Forb(U_{\boldsymbol\Gamma},Q;x,y)
 =\Forb(U_{\boldsymbol\Gamma},h;x,y).
\]

If \((\bSigma,f)\) and \((\cbSigma,\check f)\) form a mirror pair in
dual lattices of rank \(d\), then
\cref{cor:intrinsic-polytope-mirror} gives
\[
 \Fst(\bSigma,f;x,y)
 =
 x^d\Fst(\cbSigma,\check f;x^{-1},y).
\]

\enlargethispage{3\baselineskip}
\bibliographystyle{amsalpha}

\begin{thebibliography}{Wan26c}

\bibitem[BBFK02]{BBFK}
G.~Barthel, J.-P.~Brasselet, K.-H.~Fieseler, and L.~Kaup,
\emph{Combinatorial intersection cohomology for fans},
Tohoku Math. J. \textbf{54} (2002), 1--41.

\bibitem[Bat98]{BatyrevStringy}
V.~V.~Batyrev,
\emph{Stringy Hodge numbers of varieties with Gorenstein canonical
singularities},
in \emph{Integrable systems and algebraic geometry},
World Scientific, River Edge, NJ, 1998, 1--32.

\bibitem[BD96]{BatyrevDais}
V.~V.~Batyrev and D.~I.~Dais,
\emph{Strong McKay correspondence, string-theoretic Hodge numbers and
mirror symmetry},
Topology \textbf{35} (1996), no.~4, 901--929.

\bibitem[Bor14]{BorisovString}
L.~Borisov,
\emph{On stringy cohomology spaces},
Duke Math. J. \textbf{163} (2014), 1105--1126.

\bibitem[BCS05]{BorisovChenSmith}
L.~Borisov, L.~Chen, and G.~Smith,
\emph{The orbifold Chow ring of toric Deligne--Mumford stacks},
J. Amer. Math. Soc. \textbf{18} (2005), 193--215.

\bibitem[BM03]{BorisovMavlyutov}
L.~Borisov and A.~Mavlyutov,
\emph{String cohomology of Calabi--Yau hypersurfaces via mirror
symmetry},
Adv. Math. \textbf{180} (2003), 355--390.

\bibitem[BL03]{BresslerLunts}
P.~Bressler and V.~Lunts,
\emph{Intersection cohomology on nonrational polytopes},
Compos. Math. \textbf{135} (2003), 245--278.

\bibitem[Cla16]{ClarkeDualFans}
P.~Clarke,
\emph{Dual fans and mirror symmetry},
Adv. Math. \textbf{301} (2016), 902--933.

\bibitem[DRS10]{Triangulations}
J.~A.~De Loera, J.~Rambau, and F.~Santos,
\emph{Triangulations: Structures for algorithms and applications},
Algorithms and Computation in Mathematics, vol.~25, Springer, 2010.

\bibitem[ESY17]{ESY}
H.~Esnault, C.~Sabbah, and J.-D.~Yu,
\emph{\(E_1\)-degeneration of the irregular Hodge filtration},
J. Reine Angew. Math. \textbf{729} (2017), 171--227.

\bibitem[HL24]{HarderLee}
A.~Harder and S.~Lee,
\emph{Irregular Hodge numbers of stacky Clarke mirror pairs},
2024, arXiv:\allowbreak2408.09016v2.

\bibitem[IR09]{IchimRomerCanonical}
B.~Ichim and T.~R\"omer,
\emph{On canonical modules of toric face rings},
Nagoya Math. J. \textbf{194} (2009), 69--90.

\bibitem[KS16]{KatzStapledonLocal}
E.~Katz and A.~Stapledon,
\emph{Local \(h\)-polynomials, invariants of subdivisions, and mixed
Ehrhart theory},
Adv. Math. \textbf{286} (2016), 181--239.

\bibitem[QZ26]{QinZhang}
Y.~Qin and D.~Zhang,
\emph{Classical and irregular Hodge numbers},
2026, arXiv:2603.06040v1.

\bibitem[Sab18]{SabbahIHT}
C.~Sabbah,
\emph{Irregular Hodge theory},
M\'em. Soc. Math. Fr. (N.S.), vol.~156, 2018,
with the collaboration of J.-D.~Yu.

\bibitem[SY15]{SabbahYu}
C.~Sabbah and J.-D.~Yu,
\emph{On the irregular Hodge filtration of exponentially twisted
mixed Hodge modules},
Forum Math. Sigma \textbf{3} (2015), Paper No.~e9, 71~pp.

\bibitem[Sta08]{StapledonWeighted}
A.~Stapledon,
\emph{Weighted Ehrhart theory and orbifold cohomology},
Adv. Math. \textbf{219} (2008), 63--88.

\bibitem[Wan26a]{WangCompactification}
H.~Wang,
\emph{Compactification independence of the irregular Hodge
filtration on Deligne--Mumford stacks},
2026, arXiv:2608.06234v1.



\bibitem[Wan26b]{WangCoefficient}
\bysame,
\emph{Irregular Hodge bundles for Deligne--Mumford Landau--Ginzburg
families with fixed pole orders},
2026, manuscript.

\bibitem[Yas06]{Yasuda}
T.~Yasuda,
\emph{Motivic integration over Deligne--Mumford stacks},
Adv. Math. \textbf{207} (2006), no.~2, 707--761.

\bibitem[Yu14]{Yu}
J.-D.~Yu,
\emph{Irregular Hodge filtration on twisted de Rham cohomology},
Manuscripta Math. \textbf{144} (2014), no.~1--2, 99--133.

\end{thebibliography}
\providecommand{\bysame}{\leavevmode\hbox to3em{\hrulefill}\thinspace}
\providecommand{\MR}{\relax\ifhmode\unskip\space\fi MR }
\providecommand{\MRhref}[2]{\href{http://www.ams.org/mathscinet-getitem?mr=#1}{#2}}
\providecommand{\href}[2]{#2}

\end{document}